%
%
%
%
%
%

%
%

\documentclass[11pt,           
english,                       
reqno]{amsart}

%
%

\usepackage[english,strings]{babel}     
\usepackage{amsmath, amssymb}           
\usepackage{amsfonts}
\usepackage{amsthm}
\usepackage{theoremref}
\usepackage[utf8]{inputenc}    
\usepackage{longtable}         
\usepackage{exscale}           
\usepackage[final]{graphicx}   
\usepackage[sort]{cite}        
\usepackage{array}             
\usepackage{stmaryrd}
\usepackage{centernot}
\usepackage{mathtools}
\usepackage{enumerate}
\usepackage{wasysym}           
\usepackage{fancyhdr}          
\usepackage[a4paper]{geometry} 
\usepackage[multiuser]{fixme}  
\usepackage{xspace}            
\usepackage{tikz}              
\usetikzlibrary{matrix, arrows, cd}
\usepackage{ifdraft}           
\usepackage[expansion=false    
           ]{microtype}        
\usepackage[backref=page,      
           final=true,         
           pdfpagelabels       
           ]{hyperref}         
\usepackage[toc, page]
            {appendix}         


%
%

\graphicspath{{../tikz/}}

%
%

\allowdisplaybreaks

%
%

\usetikzlibrary{}

%
%

\FXRegisterAuthor{hb}{anhb}{henrique}
\FXRegisterAuthor{io}{anio}{inocencio}
\FXRegisterAuthor{sw}{answ}{stefan}

%
%



\geometry{hcentering=true}
\geometry{hscale=0.8}
\geometry{vscale=0.8}
\geometry{headheight=14pt}
\geometry{footskip=30pt}

%
%

\ifdraft{\synctex=1}{}

%
%

\fxusetheme{color}

%
%

\pagestyle{fancy}



\cfoot{\fancyplain{}{}}
\lhead[\fancyplain{}{\thepage{}}]{}
\chead[\fancyplain{}{\leftmark}]{\fancyplain{}{\rightmark}}
\rhead[]{\fancyplain{}{\thepage{}}}

%
%

\newtheorem{theorem}{Theorem}
\newtheorem{definition}[theorem]{Definition}
\newtheorem{corollary}[theorem]{Corollary}
\newtheorem{lemma}[theorem]{Lemma}
\newtheorem{proposition}[theorem]{Proposition}
\newtheorem{defprop}[theorem]{Definition/Proposition}

\newtheorem{remark}[theorem]{Remark}
\newtheorem{problem}[theorem]{Problem}
\numberwithin{theorem}{section}

\newcommand{\Def}{\operatorname{Def}}
\newcommand{\Der}{\operatorname{Der}}
\newcommand{\FPois}{\operatorname{FPois}}

\newcommand{\End}{\operatorname{End}}

\newcommand{\Aut}{\operatorname{Aut}}

\newcommand{\Diff}{\operatorname{Diff}}

\newcommand{\ke}{\operatorname{ker}}

\newcommand{\Ann}{\operatorname{Ann}}
\newcommand{\Ve}{\operatorname{Ver}}

\newcommand{\alg}[1]{{\mathcal #1}}

\newcommand{\Cour}[1]      {\left[\!\left[#1\right]\!\right]}

\DeclareMathOperator{\Cinf}{\textsl{C}^{\hspace{0.5mm}\infty}}

\DeclareMathAlphabet\mathbfcal{OMS}{cmsy}{b}{n}

\newcommand{\CCE}{C_{\scriptscriptstyle{\mathrm{CE}}}}
\newcommand{\HCE}{H_{\scriptscriptstyle{\mathrm{CE}}}}
\newcommand{\CCEder}{C_{\scriptscriptstyle{\mathrm{CE, der}}}}
\newcommand{\HCEder}{H_{\scriptscriptstyle{\mathrm{CE, der}}}}
\newcommand{\HdR}{H_{\scriptscriptstyle{\mathrm{dR}}}}
\newcommand{\id}{\mathsf{id}}
\newcommand{\graph}{\operatorname{\mathrm{graph}}}
\newcommand{\canonical}{\mathrm{can}}

\newcommand{\1}{{\scriptscriptstyle{\mathrm{(1)}}}}
\newcommand{\2}{{\scriptscriptstyle{\mathrm{(2)}}}}
\newcommand{\ind}{{\scriptscriptstyle{(i)}}}
%
%

\begin{document}

%
%

\title{Morita equivalence of formal Poisson structures}

%
%

\author{Henrique Bursztyn}
\address{IMPA - Instituto Nacional de Matematica Pura e Aplicada \\
  Estrada Dona Castorina 110 \\
  Rio de Janeiro, 22460-320 \\
  Brazil}
\email{henrique@impa.br}

\author{Inocencio Ortiz}
\address{NIDTEC-FPUNA\\
  P.O.Box: 2111 SL\\
  CEP: 2160, San Lorenzo,  Paraguay.}
\email{inortiz08@gmail.com}.

\author{Stefan Waldmann}
\address{Department of Mathematics \\
  Julius Maximilian University of W\"urzburg \\
  Emil-Fischer-Straße 31 \\
  97074 Würzburg \\
  Germany}
\email{stefan.waldmann@mathematik.uni-wuerzburg.de}

%
%

\date{\today}

%
%

\selectlanguage{english}

%
%

\maketitle

%
%

\begin{abstract}
    We extend the notion of Morita equivalence of Poisson manifolds to
    the setting of {\em formal} Poisson structures, i.e., formal power
    series of bivector fields $\pi=\pi_0 + \lambda\pi_1 +\cdots$
    satisfying the Poisson integrability condition $[\pi,\pi]=0$. Our main result gives a
    complete description of Morita equivalent formal Poisson
    structures deforming the zero structure ($\pi_0=0$) in terms of
    $B$-field transformations, relying on a general study of formal
    deformations of Poisson morphisms and dual pairs. Combined with
    previous work on Morita equivalence of star products
    \cite{BDW-Characteristicclasses-starproducts}, our results link the notions of Morita
    equivalence in Poisson geometry and noncommutative algebra via
    deformation quantization.
\end{abstract}

%
%

\tableofcontents

%
%

\section{Introduction}
\label{sec:Introduction}

Poisson manifolds are often regarded as geometric analogues of (or ``first-order
approximations'' to) noncommutative algebras, as suggested by the physical
idea of {\em quantization}, and this principle has inspired important
advances in Poisson geometry (see e.g. \cite{CannasWeinstein,KorSoiBook} and references therein).
Along these lines, the notion of Morita equivalence \cite{Morita1958}, native to the theory of rings and algebras, has a geometric version for Poisson manifolds \cite{Xu1991}. Just as Morita equivalence of rings
is characterized by the existence of special types of bimodules, used
to establish equivalences of categories of representations, Morita
equivalence of Poisson manifolds is defined in terms of geometric
bimodules known as {\em dual pairs} \cite{Weinstein83}.  Although
these parallel Morita theories bear clear analogies (see
e.g. \cite{Bursztyn2005-PoissonGeomMoritaEquiv,Landsman200-BicategoriesOpAlgebras}), an explicit link between them has
been elusive.
The main purpose of this paper is to develop new aspects
of the Morita theory of Poisson structures in order to make such link
more transparent and tangible.

 More concretely, a way to relate Poisson structures to noncommutative algebras is via {\em deformation quantization} \cite{BayenFlatoetal-I,BayenFlatoetal-II}, a procedure that constructs
algebras of ``quantum observables'' on a given manifold
by means of formal associative deformations of its classical algebra of
smooth functions, called {\em star products}.
In this theory, Kontsevich \cite{Kontsevich2003} has shown that {\em formal} Poisson structures play a central role as the geometric counterparts of star products. The main motivating question behind our work is whether there is a precise sense in which deformation quantization relates Morita equivalences in Poisson geometry and noncommutative algebra. (The problem of relating Morita equivalences in different categories has been considered in other contexts as well, see e.g.  \cite{Landsman2001-OperatorAlgebras,Mrcun1999-Functoriality,clark2015equivalent,Muhlyetal87}.)
To address this question, this paper presents an extension of the geometric notion of Morita equivalence of Poisson manifolds to the formal setting. Our main results and how they relate to deformation quantization will be explained next.



\medskip

\noindent\textbf{Main results and outline of the paper.}
A {\em formal Poisson structure} on a manifold $P$ is a formal series of bivector fields $\pi = \sum_{j=0}^\infty \lambda^j \pi_j \in \mathfrak{X}^2(P)[[\lambda]]$ defining a Poisson bracket on the ring $C^\infty(P)[[\lambda]]$.
 Since $\pi_0$ is necessarily a Poisson structure, formal Poisson structures are naturally regarded as formal deformations of ordinary Poisson structures.
Particular examples are {\em formal symplectic structures}, i.e., formal series $\omega = \sum_{j=0}^\infty \lambda^j \omega_j$ of closed 2-forms with $\omega_0$
symplectic; in this case, the nondegeneracy of $\omega_0$ implies that $\omega$ can be formally inverted to define
a formal Poisson structure.


We introduce Morita equivalence of formal Poisson structures in Section~\ref{sec:Preliminaries} as a deformation of the original notion of Morita equivalence for Poisson manifolds from \cite{Xu1991}. Given smooth manifolds $P_1$ and $P_2$ carrying formal Poisson
structures $\pi^\1 $ and $\pi^\2$, respectively, their Morita equivalence is defined by the existence
of a \emph{formal equivalence bimodule}, which consists of a formal symplectic
manifold $(S,\omega=\sum_{j=0}^\infty \lambda^j \omega_j)$ fitting
into a diagram
\begin{equation}
    \label{eq:formalEquiv}
    (\Cinf(P_1)[[\lambda]], \pi^\1)
    \stackrel{\Phi^\1}{\longrightarrow}
    (\Cinf(S)[[\lambda]], \omega)
    \stackrel{\Phi^\2}\longleftarrow
    (\Cinf(P_2)[[\lambda]], \pi^\2),
\end{equation}
where $\Phi^\1=\sum_{j=0}^\infty \lambda^j \Phi^\1_j$ (resp. $\Phi^\2 = \sum_{j=0}^\infty \lambda^j \Phi^\2_j$) is a Poisson (resp. anti-Poisson) map, their images Poisson commute in $\Cinf(S)[[\lambda]]$, and the underlying zeroth order diagram (obtained by setting $\lambda=0$),
\begin{equation*}
    (P_1,\pi^\1_0)
    \xleftarrow[]{J_1}
    (S, \omega_0)
    \xrightarrow[]{J_2}
    (P_2,\pi^\2_0),
\end{equation*}
is an equivalence bimodule (hence defines a Morita equivalence) in the ordinary sense of \cite{Xu1991}.
Here, for $i=1,2$, $J_i\colon S \to P_i$ is the classical map such that $J_i^*= \Phi^\ind_0\colon C^\infty(P_i)\to
C^\infty(S)$. With this definition in place, our goal is to describe Morita equivalence within the subset of formal Poisson structures on a manifold $P$ which vanish in zeroth order.

A key ingredient to formulate our main result is the notion of \emph{gauge transformation} of Poisson structures \cite{SeveraWeinstein2001}, also called
\emph{$B$-field} transforms \cite{Gualtieri2011}. Given a
Poisson manifold $(P,\pi)$ and a closed 2-form $B\in \Omega^2(P)$,
consider the associated bundle maps $\pi^\sharp\colon T^*P\to TP$ and
$B^\flat\colon TP\to T^*P$, and assume that $\pi$ and $B$ are
\emph{compatible} in the sense that $\id+ B^\flat\pi^\sharp\colon T^*P
\to T^*P$ is an isomorphism. In this case, the gauge transformation of
$\pi$ by $B$ is a new Poisson structure $\tau_B(\pi)$ defined by
\begin{equation*}
    {(\tau_B(\pi))}^\sharp
    =
    \pi^\sharp (\id+ B^\flat\pi^\sharp)^{-1},
\end{equation*}
see also \cite{BursztynFernandes-Picard,BursztynRadko2003}.
In the formal setting, we have a similar picture: if $\pi=\sum_{j=0}^\infty \lambda^j \pi_j$
is a formal Poisson structure and $B=\sum_{j=0}^\infty \lambda^j B_j$
is a formal series of closed 2-forms on $P$, assuming that $\pi_0$ and
$B_0$ are compatible, we obtain a new formal Poisson structure
$\tau_B(\pi)$.

When we restrict our attention to formal Poisson structures on $P$
vanishing in zeroth order, gauge transformations are well-defined for any
closed $B = \sum_{j=0}^\infty \lambda^j B_j$ (since the bivector
$\pi_0=0$ is compatible with any $B_0\in \Omega^2(P)$). Denoting by $\FPois_0(P)$ the set of equivalence classes of formal Poisson structure vanishing in zeroth order (modulo formal diffeomorphisms) and by $\HdR^2(P)$ the second de Rham cohomology, it was shown in
\cite{BDW-Characteristicclasses-starproducts} that there is an induced action
\begin{equation}
    \label{eq:Bfieldact}
    \HdR^2(P)[[\lambda]] \times \FPois_0(P)
    \to
    \FPois_0(P),
    \qquad ([B],[\pi])\mapsto [\tau_B(\pi)],
\end{equation}
where $\HdR^2(P)[[\lambda]]$ is viewed as an additive
group. Our main result fully characterizes Morita equivalence in terms of
gauge transformations:
\begin{theorem}
    \label{thm:main}%
    Two formal Poisson structures $\pi$ and $\pi'$ on $P$, vanishing
    in zeroth order, are Morita equivalent if and only if there exists
    a diffeomorphism $\psi \in \mathrm{Diff}(P)$ such that $\pi$ and
    $\psi_*\pi'$ lie in the same orbit of the action of $\HdR^2(P)[[\lambda]]$  on
    $\FPois_0(P)$ by gauge transformations.
\end{theorem}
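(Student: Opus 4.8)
The plan is to prove the two implications separately, treating the ``if'' direction as a construction and the ``only if'' direction as an extraction of invariants from a formal equivalence bimodule.

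\medskip

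\noindent\textbf{The ``if'' direction.} Suppose $\pi$ and $\psi_*\pi'$ lie in the same $\HdR^2(P)[[\lambda]]$-orbit, so $\pi = \tau_B(\psi_*\pi')$ for some closed formal two-form $B$. First I would reduce to the case $\psi = \id$: a diffeomorphism $\psi$ of $P$ trivially induces a formal equivalence bimodule between $\pi'$ and $\psi_*\pi'$, for instance via (a formal thickening of) the graph construction on $T^*P$, so by composability of Morita equivalences it suffices to show $\pi$ and $\tau_B(\pi)$ are Morita equivalent whenever they are both defined. For this I would exhibit an explicit formal equivalence bimodule. The natural candidate is built on $S = T^*P$ (or an open neighbourhood of the zero section), equipped with the canonical symplectic form twisted by the pullback of $B$: $\omega = \omega_{\mathrm{can}} + \mathrm{pr}^*B$, with both legs $J_1 = J_2 = \mathrm{pr}\colon T^*P \to P$ but carrying the two different formal Poisson structures $\pi$ and $\tau_B(\pi)$ on the two copies of $P$. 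One checks that $\mathrm{pr}$ is Poisson for $\tau_B(\pi)$ and anti-Poisson for $-\pi$ (or vice versa, up to the sign convention used in defining dual pairs), that the two pullback subalgebras Poisson-commute, and that the zeroth-order diagram is the standard equivalence bimodule for the pair $(0,0)$ on $P$ — i.e., $T^*P$ with $\omega_{\mathrm{can}}$, which is indeed a Morita equivalence of the zero Poisson structure with itself. The compatibility hypothesis on $(\pi_0,B_0)$ — here vacuous since $\pi_0 = 0$ — is exactly what guarantees $\omega = \omega_{\mathrm{can}} + \mathrm{pr}^*B$ is still (formally) symplectic and that the leg maps are submersions with the correct fibers. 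This step should be essentially a formal-power-series upgrade of the classical fact relating gauge transformations to $B$-field twists of the canonical dual pair, presumably already laid out in the preliminaries.

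\medskip

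\noindent\textbf{The ``only if'' direction.} Conversely, let $(S,\omega,\Phi^\1,\Phi^\2)$ be a formal equivalence bimodule between $\pi$ and $\pi'$, with underlying classical data $J_1\colon (S,\omega_0)\to (P,0)$, $J_2\colon (S,\omega_0)\to(P,0)$. Since $\pi_0 = \pi'_0 = 0$, the zeroth-order legs are Poisson maps to the \emph{zero} Poisson manifold, so their fibers are symplectic (the map being a Poisson submersion onto a point-like structure means the fibers are symplectic leaves), the connected fibers of $J_1$ and $J_2$ coincide, and — being an equivalence bimodule — $S$ is (locally, or after suitable completeness assumptions built into the definition) a product neighbourhood, with $J_1,J_2$ differing by a diffeomorphism $\psi$ of $P$ composed with the projection. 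Replacing $\pi'$ by $\psi_*\pi'$, I would reduce to $J_1 = J_2 =: J\colon S \to P$, a surjective submersion with symplectic fibers and $\omega_0$ restricting to the fiber symplectic forms. The heart of the argument is then to use the general theory of formal deformations of Poisson morphisms and dual pairs (invoked in the abstract as underpinning the main theorem) to show that any such formal equivalence bimodule over a fixed classical surjective submersion $J$ is, up to formal isomorphism of bimodules, the ``twisted cotangent'' model $\bigl(J^{-1}(\text{nbhd}), \omega_{\mathrm{can}} + J^*B\bigr)$ for some closed formal two-form $B$ on $P$ — and that the two boundary Poisson structures it induces are then forced to be $\pi$ and $\tau_B(\pi)$. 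Finally, one observes that formally isomorphic bimodules, and the freedom in choosing $B$ within its de Rham class, correspond precisely to the $\HdR^2(P)[[\lambda]]$-action \eqref{eq:Bfieldact}, so the orbit is a well-defined invariant and Morita equivalence forces $\pi$ and $\psi_*\pi'$ into the same orbit.

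\medskip

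\noindent\textbf{Main obstacle.} I expect the decisive difficulty to be the rigidity/classification step in the ``only if'' direction: controlling order-by-order, via the relevant deformation cohomology, the space of formal deformations of the classical equivalence bimodule and showing every such deformation is gauge-equivalent to a twisted-cotangent one. This is where the deformation theory of Poisson morphisms and dual pairs developed earlier in the paper must be brought to bear, and where one must be careful about global issues (completeness of the dual pair, nonemptiness/connectedness of fibers, and whether the classification is genuinely $\HdR^2$ or only a local de Rham group). The ``if'' direction, by contrast, is a direct construction and should be routine modulo the (already established) compatibility of gauge transformations with the canonical dual pair in the formal category.
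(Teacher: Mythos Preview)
Your overall architecture --- reduce by a diffeomorphism, then relate the formal bimodule to a twisted cotangent model --- matches the paper's. But the ``if'' direction as you describe it does not actually work, and this is not a minor issue: it is precisely the place where the nontrivial content of the paper enters.

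You propose the bimodule $(T^*P,\ \omega_{\mathrm{can}}+\rho^*B)$ with \emph{both} legs equal to the bare projection $\rho$. But $\rho^*$ is \emph{not} a Poisson morphism from $(C^\infty(P)[[\lambda]],\pi)$ to $(C^\infty(T^*P)[[\lambda]],\omega_{\mathrm{can}}+\rho^*B)$ once $\pi\neq 0$: the Hamiltonian vector field of $\rho^*f$ with respect to $\omega_{\mathrm{can}}+\rho^*B$ is the vertical lift of $df$ (adding $\rho^*B$ does not change it), so $\{\rho^*f,\rho^*g\}_{\omega_B}=0$ identically, whereas $\rho^*\{f,g\}_\pi$ is nonzero at order~$\lambda$. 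Thus the claimed ``check'' fails. What you need is a \emph{deformed} map $\Phi=\exp(\mathcal{L}_X)\rho^*$, and producing such an $X$ --- simultaneously for both legs, with the correct Poisson structure appearing on the other side --- is exactly the deformation problem solved in Sections~\ref{sec:defmorphism}--\ref{sec:ClassifyingMap} via the vanishing of $\HCEder^{1,2}(P,T^*P)$. The paper's route for the ``if'' direction is: first build a \emph{self}-equivalence bimodule for $\pi$ (Lemma~\ref{lem:selfequiv}, proved by Dirac-geometric methods in Proposition~\ref{prop:selfequiv}), then twist it by the $B$-field using Theorem~\ref{Bfieldactionondualpairs}. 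Neither ingredient is routine, and neither appears in your sketch.

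Two smaller points. First, you invoke ``composability of Morita equivalences'' to absorb $\psi$; but transitivity of formal Morita equivalence is not established a priori (the paper explicitly flags this in \S\ref{subsec:formal}) and is in fact a \emph{consequence} of Theorem~\ref{thm:main}. The fix is easy --- modify one leg of a bimodule by $\psi^*$ directly --- but you should not appeal to transitivity. Second, in your ``only if'' discussion the fibers of $J_1,J_2$ in the zeroth-order bimodule are \emph{Lagrangian}, not symplectic, and they are symplectic orthogonal rather than coinciding; the correct classical picture (from \cite{BursztynWeinstein-Picard,BursztynFernandes-Picard}) is $S=T^*P$, $J_1=\rho$, $J_2=\psi\circ\rho$, $\omega_0=\omega_{\mathrm{can}}+\rho^*B_0$. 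Your identification of the ``main obstacle'' is right in spirit --- the cohomological control of deformations of the dual pair is the heart --- but note that the paper separates this into two steps: Theorem~\ref{CharacterizationofMEFPSviadeformation} shows Morita equivalence is governed by an abstract classifying action $\gamma$, and Theorem~\ref{mainresult} then identifies $\gamma$ with the $B$-field action, the latter requiring the self-equivalence construction you are missing.
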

Hence Morita equivalence and gauge
transformations of Poisson structures coincide (modulo diffeomorphisms) in this formal context, in contrast with the
classical setting (cf. \cite[Sec.~5]{BursztynRadko2003}).

The core of the paper is devoted to the proof of Theorem~\ref{thm:main}, and we briefly outline its main ingredients. From the very definition of equivalence bimodules \eqref{eq:formalEquiv} for formal Poisson structures, it is clear that their existence leads to the following natural deformation problem:

\begin{problem}\label{prob} Given a classical Morita equivalence
\begin{equation}\label{bimod}
    (P_1,\pi_1)
    \xleftarrow[]{J_1}
    (S, \omega_0)
    \xrightarrow[]{J_2}
    (P_2,\pi_2),
\end{equation}
a formal Poisson structure $\pi^\2=\pi_2 + \sum_{j=1}^\infty\lambda^j \pi^\2_j$ and a formal symplectic structure
$\omega=\omega_0 + \sum_{j=1}^\infty \lambda^j\omega_j$,
can one find formal deformations $\Phi^\2 = J_2^* + \sum_{j=1}^\infty \lambda^j \Phi^\2_j$, $\pi^\1=\pi_1 + \sum_{j=1}^\infty\lambda^j \pi^\1_j$ and $\Phi^\1 = J_1^* + \sum_{j=1}^\infty \lambda^j \Phi^\1_j$ defining an equivalence bimodule as in \eqref{eq:formalEquiv}?
\end{problem}

We begin the analysis of this problem in Section \ref{sec:defmorphism} by dividing it into two
parts, both treated in a completely algebraic framework.

\begin{itemize}
\item We first consider deformations of Poisson morphisms. Given Poisson algebras $\mathcal{A}$ and $\mathcal{B}$, with Poisson brackets
$\pi_0$ and $\sigma_0$, and a Poisson morphism $\phi_0:
(\mathcal{A},\pi_0) \to (\mathcal{B},\sigma_0)$, if we fix formal
Poisson structures $\pi=\pi_0 + \sum_{j=1}^\infty \lambda^j \pi_j$ and
$\sigma=\sigma_0+ \sum_{j=1}^\infty \lambda^j \sigma_j $, we consider the problem of finding a
Poisson morphism
\begin{equation}
    \label{eq:Phi}
    \Phi\colon
    (\mathcal{A}[[\lambda]],\pi) \to (\mathcal{B}[[\lambda]],\sigma),
\end{equation}
with $\Phi=\phi_0 + \sum_{j=1}^\infty \lambda^j \phi_j$. We identify the cohomologies governing
this deformation problem and describe existence and uniqueness results
in Propositions~\ref{Existence} and \ref{Uniqueness}.

\item The second part concerns commutants. Given a Poisson
map as in \eqref{eq:Phi}, let $\mathcal{A}'$ be the Poisson commutant
of $\phi_0(\mathcal{A})$ in $(\mathcal{B},\sigma_0)$ and $C$ be the
Poisson commutant of $\Phi(\mathcal{A}[[\lambda]])$ in
$(\mathcal{B}[[\lambda]],\sigma)$; the problem is then deforming the
inclusion $\mathcal{A}'\to \mathcal{B}$ into an isomorphism (of
commutative rings) $\Phi': \mathcal{A}'[[\lambda]] \to C\subseteq
\mathcal{B}[[\lambda]]$. Since $C$ has a natural Poisson structure,
this isomorphism induces a formal Poisson structure $\pi'$ on
$\mathcal{A}'[[\lambda]]$ deforming that of $\mathcal{A}'$, leading to
the following diagram of Poisson maps with Poisson commuting images:
\begin{equation}\label{eq:Comm}
    (\mathcal{A}[[\lambda]],\pi)
    \xrightarrow[]{\Phi}
    (\mathcal{B}[[\lambda]],\sigma)
    \xleftarrow[]{\Phi'}
    (\mathcal{A}'[[\lambda]],\pi').
\end{equation}
Conditions for finding $\Phi'$ and the uniqueness properties of
the  resulting formal Poisson structure $\pi'$ are presented in
Propositions~\ref{Deformationofthecommutant} and
\ref{prop:unique}.
\end{itemize}

In Section~\ref{sec:ClassifyingMap}, we return to the geometric
setting of Problem \ref{prob} but focusing on
formal Poisson structures that vanish in zeroth order. With this additional assumption, the classical bimodule \eqref{bimod} is a self-equivalence of the trivial Poisson manifold $(P,\pi_0=0)$, and those have been proven in
\cite{BursztynFernandes-Picard, BursztynWeinstein-Picard} to be of the form $S=T^*P$ and $\omega_0 = \omega_{\mathrm{can}} +
\rho^*B_0$, where $\omega_{\mathrm{can}}$ is the canonical symplectic form on
$T^*P$, $\rho: T^*P\to P$ is the natural projection, and $B_0$ is a closed 2-form on $P$.
With this description and a geometric interpretation of
the cohomological conditions arising in \eqref{eq:Phi} and \eqref{eq:Comm}, we
prove in Theorem~\ref{DeformationofthezeroPoissonsymplecticrealization} that, in this case,
the deformations in Problem \ref{prob} are unobstructed and unique, in a natural
sense. We show, as a consequence, that for any given formal Poisson structure $\pi = \lambda\pi_1 + \cdots$ on $P$ and $B\in \Omega^2(P)[[\lambda]]$ closed, we
obtain a formal equivalence bimodule
\begin{equation*}
    (\Cinf(P)[[\lambda]], \pi)
    \stackrel{\Phi}{\longrightarrow}
    (\Cinf(T^*P)[[\lambda]], \omega_B)
    \stackrel{\Phi'}{\longleftarrow}
    (\Cinf(P)[[\lambda]], \pi^B),
\end{equation*}
where $\omega_B=\omega_{\mathrm{can}}+\rho^* B$, $\Phi$ and $\Phi'$ are
deformations of $\rho^*$, and $\pi^B$
a formal Poisson structure on $P$ (vanishing in zeroth order) determined by $\pi$ and $B$ (cf. \eqref{eq:Comm}).
This construction leads to a map
\begin{equation}
    \label{eq:classifyingmap}
\gamma:    \HdR^2(P)[[\lambda]] \times \FPois_0(P)
    \rightarrow
    \FPois_0(P),
    \qquad
    ([B], [\pi])\mapsto [\pi^B],
\end{equation}
defining an action that completely characterizes Morita equivalence of formal Poisson structures in $\FPois_0(P)$, as explained in Theorem~\ref{CharacterizationofMEFPSviadeformation}.
We call it the \emph{classifying action}.

In Section~\ref{sec:B-field_classification}, we complete the proof of
Theorem~\ref{thm:main} by showing, with tools from \cite{Frejlich}, that the classifying action agrees with
the action \eqref{eq:Bfieldact} of $B$-fields on formal Poisson structures.


\medskip

\noindent\textbf{Link with deformation quantization}.
We now explain how Theorem~\ref{thm:main}, combined with other results in the literature, allows us to establish a concrete link between Morita equivalences in Poisson geometry and algebra through deformation quantization.

A {\em star product} on a manifold $P$ is a formal associative deformation of the algebra of ($\mathbb{C}$-valued) smooth functions on $P$, i.e., a product on $C^\infty(P)[[\lambda]]$ of the form
$f\star g = fg + \sum_{k=1}^\infty\lambda^k C_k(f,g),$
where each $C_k: C^\infty(P)\times C^\infty(P)\to C^\infty(P)$ is a bidifferential operator. Two star products are {\em equivalent} if they are isomorphic via $\id + \sum_{k=1}^\infty \lambda^k T_k$, for differential operators $T_k: C^\infty(P)\to C^\infty(P)$; we denote the set of equivalence classes of star products on $P$ by $\Def(P)$.
Poisson geometry enters the picture through the fact that any star product $\star$ {\em quantizes} a Poisson structure on $P$ given by the semi-classical limit of its commutators: $\{f,g\}:= \frac{1}{\lambda}(f\star g - g\star f) |_{\lambda=0}$.

A celebrated result of Kontsevich \cite{Kontsevich2003} asserts that there are as many classes of star products quantizing a given Poisson structure on $P$ as there are classes of formal Poisson deformations of this Poisson structure; more precisely, there is a bijective correspondence
\begin{equation}
    \label{eq:K}
    \mathcal{K}\colon
    \mathrm{FPois}_0(P) \stackrel{\sim}{\to} \mathrm{Def}(P),
\end{equation}
 with the property that star products in the class $\mathcal{K}(\pi)$ quantize the Poisson structure $\pi_1$ such that $\pi = \lambda\pi_1 + \cdots$. Hence, once geometric Morita equivalence is extended to formal Poisson structures, it makes sense to use  $\mathcal{K}$ to compare it with algebraic Morita equivalence of star products.

To tackle this problem, recall that the
Morita equivalence classes of star products on  $P$ are characterized as orbits of a natural action of the group $\mathrm{Diff}(P)\ltimes H^2(P,\mathbb{Z})$ on $\Def(P)$ \cite[Theorem~4.1]{Bursztyn-QuantumLineBundles}.
The main contribution of this paper is to show a similar picture for formal Poisson structures (see Theorem~\ref{CharacterizationofMEFPSviadeformation}): Morita equivalence classes in $\FPois_0(P)$ are orbits of an action of $\mathrm{Diff}(P)\ltimes \HdR^2(P,\mathbb{C})[[\lambda]]$; additionally, Theorem~\ref{thm:main} gives an explicit description of this action in terms of gauge transformations.
As a final ingredient,  \cite[Theorem~3.11]{BDW-Characteristicclasses-starproducts} relates these results by asserting that
the map \eqref{eq:K} is $(\mathrm{Diff}(P)\ltimes H^2(P,\mathbb{Z}))$-equivariant, i.e., it
intertwines gauge transformations on $\FPois_0(P)$ by 2-forms in the image of the
natural map $2\pi i H^2(P,\mathbb{Z})\to \HdR^2(P,\mathbb{C})$ with the action on $\Def(P)$. The conclusion is that, under Kontsevich's quantization map $\mathcal{K}$, Morita
equivalence of formal Poisson structures by $B$-fields in $2\pi i H^2(P,\mathbb{Z})$
corresponds to Morita equivalence of star products, so the notions coincide upon an integrality condition.

\medskip

\noindent\textbf{Acknowledgments}. H. Bursztyn and I. Ortiz thank
CNPq, Faperj, INCTMat and FPUNA for financial support.  Several institutions
have hosted us during various stages of this project, including IMPA,
U. W\"urzburg, U. Buenos Aires, Erwin Sch\"odinger Institute and UFRJ. We have benefited from discussions with A. Cabrera and R. L. Fernandes.

%
%

\section{Morita equivalence of formal Poisson structures}
\label{sec:Preliminaries}

We start by recalling some definitions and setting up the notation used throughout the paper. Smooth functions and tensors on a manifold $P$ will be considered with ground field $\mathbb{K}=\mathbb{R}$ or $\mathbb{C}$.

\subsection{Preliminaries}\label{subsec:prelim}
A Poisson structure on a manifold $P$ will be denoted by either a Poisson bivector field $\pi \in \mathfrak{X}^2(P)$ or by its corresponding Poisson bracket $\{\cdot,\cdot\}$ on $C^\infty(P)$,
$\{f,g\}=\pi(df,dg)$.
Given $f\in C^\infty(P)$ its hamiltonian vector field is $X_f = \pi^\sharp(df) = i_{df}\pi$, so that $\mathcal{L}_{X_f}g = \{f,g\}$.
A Poisson map $\varphi: (P_1,\pi_1)\to (P_2,\pi_2)$ is {\bf complete} if, whenever $X_f$ is a complete vector field for $f\in C^\infty(P_2)$, then so is $X_{\varphi^*f}$. A map $\varphi: (P_1,\pi_1)\to (P_2,\pi_2)$ is {\bf anti-Poisson} if
$\varphi: (P_1,\pi_1)\to (P_2,-\pi_2)$ is Poisson.

We denote by $C^\infty(P)[[\lambda]]$ the space of formal power series in $\lambda$ with coefficients in $C^\infty(P)$,
and we use similar notation when $C^\infty(P)$ is replaced by the space of multivector fields $\mathfrak{X}^\bullet(P)$ or differential forms $\Omega^\bullet(P)$; these spaces will be always regarded as modules over $\mathbb{K}[[\lambda]]$. We consider the ($\lambda$-linear) extensions of the de Rham differential to $\Omega^\bullet(P)[[\lambda]]$ and Schouten bracket to $\mathfrak{X}^\bullet(P)[[\lambda]]$.

For a formal vector field $X = \sum_{j=1}^{\infty}\lambda^jX_j \in \lambda \mathfrak{X}(P)[[\lambda]]$, the formal series
\begin{equation}
    \label{eq:formaldiffeo}
    \exp(\mathcal{L}_X)
    =
    \id + \mathcal{L}_X+\frac{(\mathcal{L}_X)^2}{2!} + \cdots
\end{equation}
is called a {\bf formal diffeomorphism} on $P$, where $\mathcal{L}_X= \sum_{j=1}^{\infty}\lambda^j\mathcal{L}_{X_j}$ is the Lie derivative along $X$. Formal diffeomorphisms form a group (thanks to the
Baker-Campbell-Hausdorff formula) that naturally acts on $C^\infty(P)[[\lambda]]$, $\Omega^\bullet(P)[[\lambda]]$ and $\mathfrak{X}^\bullet(P)[[\lambda]]$ preserving their ring structures (as well as the de Rham differential on $\Omega^\bullet(P)[[\lambda]]$ and Schouten bracket on $\mathfrak{X}^\bullet(P)[[\lambda]]$).

 For manifolds $P$ and $S$, consider $C^\infty(P)[[\lambda]]$ and $C^\infty(S)[[\lambda]]$ with their commutative products. Any morphism of commutative rings $\Phi: C^\infty(P)[[\lambda]]\to C^\infty(S)[[\lambda]]$ is a formal series $\Phi=\sum_{j=0}^\infty \lambda^j \phi_j$ of linear maps $\phi_j: C^\infty(P)\to C^\infty(S)$. Moreover, $\phi_0: C^\infty(P)\to C^\infty(S)$ is a morphism of algebras, hence of the form $\phi_0=J^*$ for a smooth map $J:S \to P$. If $S=P$ and $\phi_0=\id$, then $\Phi=\exp(\mathcal{L}_X)$ for a formal vector field $X$.

\begin{lemma}\label{lem:formalVF}
Let $\Phi= J^* + \sum_{k=1}^\infty \lambda^k \phi_k: C^\infty(P)[[\lambda]]\to C^\infty(S)[[\lambda]]$ be a morphism of commutative rings so that $J: S\to P$ is a surjective submersion.
Then there exists a formal vector field $Z\in \lambda \mathfrak{X}(S)[[\lambda]]$ such that $\Phi=\exp(\mathcal{L}_Z) J^*$.
\end{lemma}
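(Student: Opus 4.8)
The plan is to remove the higher-order terms of $\Phi$ one degree at a time by conjugating with formal diffeomorphisms of $S$, using that $\Phi$ is a morphism of commutative rings together with the fact that $J$ is a submersion. Concretely, I argue by induction: suppose that for some $n\ge 0$ I have produced $Z^{(n)}=\sum_{k=1}^{n}\lambda^k Z_k\in\lambda\mathfrak{X}(S)[[\lambda]]$ (with $Z^{(0)}=0$) such that
\[
  \Phi^{(n)}:=\exp(-\mathcal{L}_{Z^{(n)}})\circ\Phi
  = J^* + \lambda^{n+1}\psi_{n+1} + O(\lambda^{n+2})
\]
for some linear map $\psi_{n+1}\colon C^\infty(P)\to C^\infty(S)$; the case $n=0$ is the hypothesis. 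Since $\exp(\mathcal{L}_{Z^{(n)}})$ acts on $C^\infty(S)[[\lambda]]$ as a $\mathbb{K}[[\lambda]]$-algebra automorphism, $\Phi^{(n)}$ is again a morphism of commutative rings.

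Comparing the coefficients of $\lambda^{n+1}$ in the identity $\Phi^{(n)}(fg)=\Phi^{(n)}(f)\,\Phi^{(n)}(g)$ yields $\psi_{n+1}(fg)=\psi_{n+1}(f)\,J^*g + J^*f\,\psi_{n+1}(g)$; that is, $\psi_{n+1}$ is a derivation of $C^\infty(P)$ with values in $C^\infty(S)$, the latter viewed as a $C^\infty(P)$-module via $J^*$. The key geometric claim is that any such derivation has the form $\psi_{n+1}=\mathcal{L}_{Z_{n+1}}\circ J^*$ for some genuine vector field $Z_{n+1}\in\mathfrak{X}(S)$. Granting this, I let $Z^{(n+1)}$ be the formal vector field determined by $\exp(\mathcal{L}_{Z^{(n+1)}})=\exp(\mathcal{L}_{Z^{(n)}})\circ\exp(\lambda^{n+1}\mathcal{L}_{Z_{n+1}})$ (which exists by the $\BCH$ formula). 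Because $Z^{(n)}\in\lambda\mathfrak{X}(S)[[\lambda]]$ and the new generator sits in degree $n+1$, one has $Z^{(n+1)}\equiv Z^{(n)}+\lambda^{n+1}Z_{n+1}\pmod{\lambda^{n+2}}$, and
\[
  \exp(-\mathcal{L}_{Z^{(n+1)}})\circ\Phi
  = \exp(-\lambda^{n+1}\mathcal{L}_{Z_{n+1}})\circ\Phi^{(n)}
  = J^* + \lambda^{n+1}\bigl(\psi_{n+1}-\mathcal{L}_{Z_{n+1}}J^*\bigr) + O(\lambda^{n+2})
  = J^* + O(\lambda^{n+2}),
\]
which completes the inductive step.

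To prove the claim, fix $s\in S$ and observe that $f\mapsto \psi_{n+1}(f)(s)$ is a point derivation of $C^\infty(P)$ relative to evaluation at $J(s)$, hence equals $df_{J(s)}(V_s)$ for a unique $V_s\in T_{J(s)}P$; since $\psi_{n+1}$ takes values in $C^\infty(S)$, the assignment $s\mapsto V_s$ is a smooth section $V$ of $J^*TP$, i.e.\ a vector field along $J$. As $J$ is a submersion, $dJ\colon TS\to J^*TP$ is a surjective morphism of vector bundles over $S$, so it admits a smooth bundle splitting $\sigma\colon J^*TP\to TS$; then $Z_{n+1}:=\sigma\circ V\in\mathfrak{X}(S)$ satisfies $dJ\circ Z_{n+1}=V$, whence $\mathcal{L}_{Z_{n+1}}(J^*f)(s)=df_{J(s)}\bigl(dJ_s(Z_{n+1,s})\bigr)=df_{J(s)}(V_s)=\psi_{n+1}(f)(s)$. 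This step — realizing a derivation over $J^*$ as the Lie derivative along an honest vector field on $S$ — is where the submersion hypothesis is used, and it is the only non-formal ingredient; I expect it to be the main obstacle, though it is fairly standard once phrased as a lifting problem for $dJ$.

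Finally, the $Z^{(n)}$ form a $\lambda$-adically Cauchy sequence (consecutive terms differ by $O(\lambda^{n+1})$) with limit $Z=\sum_{k\ge 1}\lambda^k Z_k\in\lambda\mathfrak{X}(S)[[\lambda]]$. For every $m$ one has $Z\equiv Z^{(m)}\pmod{\lambda^{m+1}}$, hence $\exp(-\mathcal{L}_Z)\circ\Phi\equiv\exp(-\mathcal{L}_{Z^{(m)}})\circ\Phi\equiv J^*\pmod{\lambda^{m+1}}$; since $m$ is arbitrary, $\exp(-\mathcal{L}_Z)\circ\Phi=J^*$, that is, $\Phi=\exp(\mathcal{L}_Z)\,J^*$, as desired.
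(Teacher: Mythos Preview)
Your proof is correct and follows essentially the same inductive strategy as the paper: at each step the next-order discrepancy is a derivation along $J^*$, which the submersion hypothesis lets you realize as $\mathcal{L}_{Z_{n+1}}\circ J^*$ via a splitting of $dJ\colon TS\to J^*TP$. The only cosmetic difference is that you compose exponentials via $\BCH$ whereas the paper simply sets $Z_{(k)}=\lambda Z_1+\cdots+\lambda^k Z_k$ additively; both yield the same order-by-order correction (note, incidentally, that after invoking $\BCH$ your $Z^{(n+1)}$ is no longer the finite sum $\sum_{k=1}^{n+1}\lambda^k Z_k$ asserted in your inductive hypothesis, but this is harmless since the Cauchy/limit argument only uses $Z^{(n+1)}\equiv Z^{(n)}\pmod{\lambda^{n+1}}$).
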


\begin{proof}
First recall that any linear map $\phi: C^\infty(P)\to C^\infty(S)$ which is a derivation along $J^*$, i.e., which satisfies $\phi(fg) = \phi(f) J^*g + J^*f \phi(g)$, is an element in $\Gamma(J^*TP)$. If $J$ is a surjective submersion, by considering a horizontal lift $\Gamma(J^*TP)\to \mathfrak{X}(S)$, we can find a vector field $Z\in \mathfrak{X}(S)$ with $\phi = \mathcal{L}_Z \circ J^*$.

By expanding in $\lambda$ the condition $\Phi(fg) = \Phi(f)\Phi(g)$ for all
    $f, g\in\Cinf(P)$, we obtain
    \begin{equation}
        \label{eqforphik}
        \phi_k(fg)
        =
        \sum_{i+j=k}\phi_i(f)\phi_j(g),
    \end{equation}
    with $\phi_0 = J^*$. Note that, for $k=1$, we have that $\phi_1$ is a derivation along $J^*$, so we can find $Z_1 \in \mathfrak{X}(S)$ such that $\phi_1=\mathcal{L}_{Z_1} \circ J^*$. For $Z_{(1)}=\lambda Z_1$,
    it follows that $\Phi$ agrees with $\exp(\mathcal{L}_{Z_{(1)}}) J^*$ modulo $\lambda^2$.
    Suppose now that we have vector fields $Z_1,\ldots,Z_{k-1} \in \mathfrak{X}(S)$ such that $\Phi$ agrees with $\exp(\mathcal{L}_{Z_{(k-1)}}) J^*$ modulo $\lambda^k$, where $Z_{(k-1)}= \lambda Z_1 + \ldots + \lambda^{k-1}Z_{k-1}$. Denoting the $k^{th}$ order term of $\exp(\mathcal{L}_{Z_{(k-1)}}) J^*$ by $E_k$, equation \eqref{eqforphik} implies that $\phi_k-E_k$ is a a derivation along $J^*$, so there is a vector field $Z_k$ on $S$ such that $\phi_k-E_k = \mathcal{L}_{Z_k}\circ J^*$. Setting $Z_{(k)}= \lambda Z_1 + \ldots + \lambda^{k}Z_{k}$, one directly checks that $\phi_k = E_k+ \mathcal{L}_{Z_k}\circ J^*$ agrees with $k^{th}$ order term of $\exp(\mathcal{L}_{Z_{(k)}}) J^*$, and hence $\Phi$ agrees with $\exp(\mathcal{L}_{Z_{(k)}}) J^*$ modulo $\lambda^{k+1}$, so the result follows by induction.

\end{proof}

On a manifold $P$,
 a \textbf{formal Poisson structure} is a formal series  $\pi =\sum_{j=0}^\infty\lambda^j\pi_j\in\mathfrak{X}^2(P)[[\lambda]]$ such that $[\pi,\pi]=0$, where $[\cdot,\cdot]$ is the Schouten bracket (extended to formal bivector fields $\lambda$-bilinearly). It is clear from the integrability equation that, if $\pi_j=0$ for $j<k$, then $\pi_k$ is an ordinary Poisson structure.
Just as in the ordinary setting, a formal Poisson structure can be  viewed as a Poisson bracket on the ring $C^\infty(P)[[\lambda]]$.

 Two formal Poisson structures $\pi^\1$ and $\pi^\2$ on $P$ are \textbf{equivalent}
    if there exists a formal vector field
    $X\in\lambda\mathfrak{X}(P)[[\lambda]]$ such that $\pi^\2 =
    \exp(\mathcal{L}_X)\pi^\1$.
In terms of the corresponding Poisson brackets $\{\cdot,\cdot\}^{(i)}$, $i=1,2$, this amounts to saying that
$$
    \exp(\mathcal{L}_X)\colon
    (\Cinf(P)[[\lambda]], \{\cdot,\cdot\}^\1)
    \to
    (\Cinf(P)[[\lambda]], \{\cdot,\cdot\}^\2)
$$
is bracket preserving. Note that if $\pi^\1$ and $\pi^\2$ are equivalent, then they agree in zeroth order: $\pi^\1_0=\pi^\2_0$. For a given Poisson structure $\pi_0$ on $P$, we denote by $\FPois_{\pi_0}(P)$ the set of formal Poisson structures on $P$ deforming $\pi_0$, up to equivalence.

A formal Poisson structure $\pi$ on $P$ also gives rise to a linear map
\begin{equation}\label{eq:pisharp}
\pi^\sharp\colon \Omega^1(P)[[\lambda]]\to \mathfrak{X}(P)[[\lambda]],\;\;\; \alpha\mapsto i_\alpha\pi.
\end{equation}
Given $f\in C^\infty(P)[[\lambda]]$, its hamiltonian vector field is defined as the formal vector field $\pi^\sharp(df)$.

A special class of formal Poisson structure is given by {\bf formal symplectic structures}, i.e.,
formal series of closed 2-forms $\omega = \omega_0 +\sum_{k=1}^\infty\lambda^k\omega_k$ with $\omega_0$
 symplectic. In this case the corresponding map
$$
\omega^\flat\colon \mathfrak{X}(P)[[\lambda]]\to \Omega^1(P)[[\lambda]], \;\;\; X\mapsto i_X\omega,
$$
can be formally inverted, since in zeroth order it is given by the invertible map $\omega_0^\flat\colon \mathfrak{X}(P) \to \Omega^1(P)$, and its inverse is a map \eqref{eq:pisharp} determining a formal Poisson structure on $P$. More generally, if
$\omega_0$ is a symplectic form on $P$, with corresponding Poisson bivector field $\pi_0$, then this formal inversion establishes a bijective correspondence between formal Poisson deformations of $\pi_0$ and formal symplectic forms $\omega_0 +\sum_{k=1}^\infty\lambda^k\omega_k$.

\begin{remark}\label{rem:symplecticdeformations}
   Under this correspondence, equivalent formal Poisson structures are identified with cohomologous formal symplectic forms, so when $\pi_0$ is symplectic we have $\FPois_{\pi_0}(P) \cong \lambda \HdR^2(P)[[\lambda]]$ \cite{Lecomte-LMP13-1987} (see also \cite[Prop.~13]{Gutt-variationsonDQ-2000})
\end{remark}

A Poisson map $\Phi\colon (C^\infty(P_1)[[\lambda]],\pi^\1)\to (C^\infty(P_2)[[\lambda]],\pi^\2)$ is always given by a formal series $\Phi=\sum_{k=0}^\infty \lambda^k \phi_k$, with $\phi_k\colon C^\infty(P_1)\to C^\infty(P_2)$; the map $\phi_0$ is necessarily a morphism of Poisson algebras,
$$
\phi_0\colon (C^\infty(P_1),\{\cdot,\cdot\}^\1_0)\to (C^\infty(P_2),\{\cdot,\cdot\}^\2_0),
$$
hence must be of the form $\phi_0 = \varphi^*$ for a Poisson map $\varphi\colon (P_2,\pi^\2_0)\to (P_1,\pi^\1_0)$.

\subsection{Morita equivalence of Poisson manifolds}
Given Poisson manifolds $(P_i, \pi_i)$, $i = 1, 2$, let us consider a diagram of the form
\begin{equation}\label{eq:bimodule}
        (P_1, \pi_1)
        \stackrel{J_1}{\longleftarrow}
        (S, \omega_0)
        \stackrel{J_2}\longrightarrow
        (P_2, \pi_2),
\end{equation}
where $(S, \omega_0)$ is a symplectic manifold, $J_1$ is a Poisson map, and $J_2$ is an anti-Poisson map.
We will call it a $(P_1,P_2)$-{\bf bimodule} if the subalgebras $J_i^*C^\infty(P_i)$, $i=1,2$, Poisson commute in $C^\infty(S)$.

The notion of Morita equivalence of Poisson manifolds, introduced in \cite{Xu1991}, relies on special types of bimodules, satisfying additional regularity conditions:

\begin{definition}\label{def:EBim}
An {\bf equivalence bimodule} is a diagram as in \eqref{eq:bimodule} such that the maps $J_1$ and $J_2$ are surjective submersions, complete, with connected and simply-connected fibers, and the subbundles tangent to their fibers are symplectic orthogonal complements of each other.
\end{definition}

Recall that, for a Poisson algebra $\mathcal{A}$ with Poisson subalgebra $\mathcal{B}\subseteq \mathcal{A}$, the {\bf commutant} of $\mathcal{B}$ in $\mathcal{A}$ is the Poisson subalgebra $\mathcal{B}^c:=\{a\in \mathcal{A}\,|\, \{a,\mathcal{B}\}=0 \}$.
For equivalence bimodules, the Poisson subalgebras $J_i^*C^\infty(P_i)\subseteq C^\infty(S)$, $i=1,2$, are commutants of one another (see \cite{MOR2004-LocalGlobarPair}); these bimodules are special cases of  the ``dual pairs'' of \cite[Sec.~8]{Weinstein83}).

\begin{definition}\label{definition:MoritaPoisson}
    Two Poisson manifolds $(P_1, \pi_1)$ and $(P_2,\pi_2)$
    are \textbf{Morita equivalent} if there is an equivalence bimodule
    \begin{equation}\label{eq:Mequiv}
        (P_1, \pi_1)
        \stackrel{J_1}{\longleftarrow}
        (S, \omega_0)
        \stackrel{J_2}\longrightarrow
        (P_2, \pi_2).
    \end{equation}
     \end{definition}

Not every Poisson manifold can be part of an equivalence bimodule; as shown in \cite{Crainicetal2004}, this can only happen if the Poisson manifold is {\em integrable} (in the sense that it admits an integration by a symplectic groupoid). But within integrable Poisson manifolds, Morita equivalence does define an equivalence relation \cite{Xu1991,Xu1991-MESymplecticGroupoids}. For more on Poisson Morita equivalence, see e.g. \cite{BursztynWeinstein-Picard}.

We now pass to the formal context.

\subsection{Morita equivalence in the formal setting} \label{subsec:formal}
Let $(P_i, \pi^\ind)$,  $i = 1, 2$, be formal Poisson manifolds. As in the classical case, a {\bf bimodule}
is a diagram
\begin{equation}\label{eq:formalBim}
        (\Cinf(P_1)[[\lambda]], \pi^\1)
        \stackrel{\Phi^\1}{\longrightarrow}
        (\Cinf(S)[[\lambda]], \omega)
        \stackrel{\Phi^\2}\longleftarrow
        (\Cinf(P_2)[[\lambda]], \pi^\2),
\end{equation}
where $\omega=\sum_{k=0}^\infty \lambda^k \omega_k$ is a formal symplectic structure on $S$,
$\Phi^\1$ is a Poisson morphism, $\Phi^\2$ is an anti-Poisson morphism, and the Poisson subalgebras $\Phi^\1(\Cinf(P_1)[[\lambda]])$ and $\Phi^\2(\Cinf(P_2)[[\lambda]])$ Poisson commute in $(\Cinf(S)[[\lambda]], \omega)$.

Setting $\lambda=0$, we obtain a geometric diagram
\begin{equation}\label{eq:classdualpair}
(P_1, \pi^\1_0)
        \stackrel{J_1}{\longleftarrow}
        (S, \omega_0)
        \stackrel{J_2}\longrightarrow
        (P_2, \pi^\2_0),
\end{equation}
where $J_1$ (resp. $J_2$) is a Poisson (resp. anti Poisson) map agreeing with $\Phi^\1$ (resp. $J_2^*=\Phi^\2$) in zeroth order.

\begin{definition}
    \label{definition:FormalPoissonME}%
A bimodule as in \eqref{eq:formalBim} is a (formal) {\bf  equivalence bimodule} if its underlying geometric diagram
\eqref{eq:classdualpair} is an equivalence bimodule in the sense of Definition~\ref{def:EBim}.
\end{definition}

Two formal Poisson manifolds  $(P_1,\pi^\1)$ and $(P_2,\pi^\2)$  are {\bf Morita equivalent} if they fit into an equivalence bimodule as in \eqref{eq:formalBim}. In particular, the zeroth order Poisson manifolds $(P_1,\pi^\1_0)$ and $(P_2,\pi^\2_0)$ are Morita equivalent in the ordinary sense.

As in the original setting, Morita equivalence is not defined for all formal Poisson structures, and it seems a difficult problem to characterize the subclass where Morita equivalence defines an equivalence relation (the integrability of the Poisson structures in zeroth order is clearly necessary, but our results in Section~\ref{sec:defmorphism} identify obstructions indicating that this condition is not enough). Within formal Poisson structures vanishing in zeroth order, which is the focus of this paper, Theorem~\ref{thm:main} ensures that Morita equivalence is a well defined equivalence relation.

Making use of Lemma~\ref{lem:formalVF}, we can rephrase the definition of Morita equivalence of formal Poisson structures as follows.

\begin{defprop}
Two formal Poisson manifolds $(P_1,\pi^\1)$ and $(P_2,\pi^\2)$ are Morita equivalent if and only if there is an equivalence bimodule
$$
(P_1, \pi^\1_0)
        \stackrel{J_1}{\longleftarrow}
        (S, \omega_0)
        \stackrel{J_2}\longrightarrow
        (P_2, \pi^\2_0),
$$
along with a formal symplectic form $\omega = \omega_0 + \sum_{k=1}^\infty \lambda^k \omega_k$ on $S$ and formal vector fields $Z^{(i)}\in \lambda\mathfrak{X}(S)[[\lambda]]$ so that $\Phi^\1 = \exp(\mathcal{L}_{Z^\1})J_1^*\colon (C^\infty(P_1)[[\lambda]],\pi^\1)\to (C^\infty(S)[[\lambda]],\omega)$ is a Poisson morphism, $\Phi^\2 = \exp(\mathcal{L}_{Z^\2})J_2^*\colon (C^\infty(P_2)[[\lambda]],\pi^\2)\to (C^\infty(S)[[\lambda]],\omega)$ is an anti-Poisson morphism, and
\begin{equation}
        \label{altPoissoncommutation}
        \big\{
        \Phi^\1(\Cinf(P_1)[[\lambda]]),
        \Phi^\2(\Cinf(P_2)[[\lambda]])
        \big\}_{\omega}=0.
    \end{equation}
\end{defprop}


As in the geometrical case, equivalence bimodules in the formal setting have the following property.

\begin{proposition}\label{prop:commimage}
Let
$$
        (\Cinf(P_1)[[\lambda]], \pi^\1)
        \stackrel{\Phi^\1}{\longrightarrow}
        (\Cinf(S)[[\lambda]], \omega)
        \stackrel{\Phi^\2}\longleftarrow
        (\Cinf(P_2)[[\lambda]], \pi^\2)
$$
be an equivalence bimodule. Then the subalgebras $\Phi^\1(\Cinf(P_1)[[\lambda]])$ and $\Phi^\2(\Cinf(P_2)[[\lambda]])$ are commutants of one another in $(\Cinf(S)[[\lambda]], \omega)$.
\end{proposition}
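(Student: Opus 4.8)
The plan is to establish both equalities of commutants by proving two inclusions each: the ``easy'' inclusion is immediate from the definition of a bimodule, while the substantial one follows from a $\lambda$-adic induction that bootstraps the corresponding classical fact for equivalence bimodules. First I would record that, by the very definition of a bimodule, the images $\Phi^\1(\Cinf(P_1)[[\lambda]])$ and $\Phi^\2(\Cinf(P_2)[[\lambda]])$ Poisson commute in $(\Cinf(S)[[\lambda]],\omega)$; hence each image is contained in the commutant of the other. It therefore suffices to prove the reverse inclusions, and by the symmetric roles of $P_1$ and $P_2$ it is enough to show that the commutant $C$ of $\Phi^\2(\Cinf(P_2)[[\lambda]])$ is contained in $\Phi^\1(\Cinf(P_1)[[\lambda]])$.

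Given $f = \sum_{k\ge 0}\lambda^k f_k \in C$, I would construct, order by order, an element $h=\sum_{k\ge0}\lambda^k h_k\in \Cinf(P_1)[[\lambda]]$ with $\Phi^\1(h)=f$. For the base step, apply the hypothesis $\{f,\Phi^\2(g)\}_\omega=0$ to $\lambda$-constant arguments $g=g_0\in C^\infty(P_2)$ and extract the $\lambda^0$-component; since $\Phi^\2$, $f$ and the formal Poisson bracket of $\omega$ reduce in zeroth order to $J_2^*$, $f_0$ and the bracket of $\omega_0$, this yields $\{f_0,J_2^*g_0\}_{\omega_0}=0$ for all $g_0$, i.e. $f_0$ lies in the $\omega_0$-commutant of $J_2^*C^\infty(P_2)$ in $C^\infty(S)$. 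Because the underlying geometric diagram \eqref{eq:classdualpair} is a classical equivalence bimodule, this commutant equals $J_1^*C^\infty(P_1)$ (the classical fact recalled above, see \cite{MOR2004-LocalGlobarPair}); hence $f_0=J_1^*h_0$ for some $h_0\in C^\infty(P_1)$, which is moreover unique since $J_1$ is a surjective submersion. Now $f-\Phi^\1(h_0)$ has vanishing zeroth order (as $\Phi^\1$ agrees with $J_1^*$ in zeroth order), so $f-\Phi^\1(h_0)=\lambda f^{(1)}$ for some $f^{(1)}\in\Cinf(S)[[\lambda]]$; moreover $\{f-\Phi^\1(h_0),\Phi^\2(g)\}_\omega=0$ for all $g$, because $\Phi^\1(h_0)$ Poisson commutes with the $\Phi^\2$-image by the bimodule condition, and since the bracket is $\lambda$-bilinear and $\Cinf(S)[[\lambda]]$ is torsion-free over $\mathbb{K}[[\lambda]]$ we conclude $f^{(1)}\in C$. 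Iterating the argument with $f^{(1)}$ in place of $f$ produces $h_1$, then $h_2$, and so on.

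For the inductive step I would check that after $N$ stages one has $f-\Phi^\1\!\big(\sum_{k<N}\lambda^k h_k\big)\in\lambda^N\Cinf(S)[[\lambda]]$, with the remainder divided by $\lambda^N$ again lying in the commutant of the full image $\Phi^\2(\Cinf(P_2)[[\lambda]])$; the single point needing care here is precisely that this rescaled remainder still commutes with the whole image, which uses the bimodule relation and not merely the classical dual-pair statement. Since the previously chosen $h_k$ are never altered, $h=\sum_k\lambda^k h_k$ is a well-defined formal series, and because $\Phi^\1$ maps $\lambda^N\Cinf(P_1)[[\lambda]]$ into $\lambda^N\Cinf(S)[[\lambda]]$ we get $f-\Phi^\1(h)\in\bigcap_N\lambda^N\Cinf(S)[[\lambda]]=\{0\}$, so $f=\Phi^\1(h)$. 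This gives $C\subseteq\Phi^\1(\Cinf(P_1)[[\lambda]])$, hence equality; exchanging $P_1$ and $P_2$ yields the other equality of commutants. I do not anticipate a real obstacle, only the bookkeeping of the $\lambda$-adic induction and the $\lambda$-adic continuity of $\Phi^\1$ used in the limit.
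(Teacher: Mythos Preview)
Your proposal is correct and follows essentially the same approach as the paper's proof: both arguments establish the nontrivial inclusion by a $\lambda$-adic induction, invoking at each step the classical commutant property of the underlying equivalence bimodule to peel off the lowest-order term as $J_1^*h_k$, subtracting $\Phi^\1(h_k)$, and iterating. Your write-up is in fact a bit more explicit than the paper's about why the remainder stays in the commutant and about the $\lambda$-adic convergence at the end.
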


\begin{proof}
Let us write $\Phi^\1 = \exp(\mathcal{L}_{Z^\1})J_1^*$ and $\Phi^\2 = \exp(\mathcal{L}_{Z^\2})J_2^*$.
We will use the fact that $J_1^*(C^\infty(P_1))$ and $J_2^*(C^\infty(P_2))$ are commutants of one another in $(C^\infty(S),\omega_0)$ and property \eqref{altPoissoncommutation} to show the result.

Let
    $A_i= \Cinf(P_i)[[\lambda]]$, for $i = 1, 2$, and let $A_i'$ be
    the Poisson commutant of $\Phi^\ind(A_i)$ inside
    $(\Cinf(S)[[\lambda]], \omega)$. Note that condition
    \eqref{altPoissoncommutation} means that
    $\exp(\mathcal{L}_{Z^\1})$ maps $J_1^*(A_1)$ (injectively) into
    $A_2'$. Now let $F = F_0+\lambda F_1+\cdot\cdot\cdot\in A_2'$.
    Then $\{F, \Phi^\2(g)\}_\omega = 0$ for all
    $g\in\Cinf(P_2)$, which in zeroth order means that
    $\{F_0, J_2^*(g)\}_{\omega_0} = 0$ for all $g\in \Cinf(P_2)$. It follows that  $F_0 = J_1^*f_0$, for some $f_0\in\Cinf(P_1)$. and hence
    $F-\exp(\mathcal{L}_{Z^\1})J_1^*f_0 =
    \lambda\hat{F_1}+\cdot\cdot\cdot\in A_2'$. Repeating the argument, we see that $\hat{F_1} = J_1^*f_1$, with $f_1\in\Cinf(P_1)$. By iterating this
    argument, we conclude that $F = \exp(\mathcal{L}_{Z^\1})J_1^*f$, for some
    $f\in A_1$. Hence, $\Phi^\1(A_1) = A_2'$, and by symmetry we also
    have that $\Phi^\2(A_2) = A_1'$.
\end{proof}

\subsection{Morita equivalence and \emph{B}-fields}\label{subsec:MEBfields}

Given an equivalence bimodule
\begin{equation}\label{eq:EB}
(\Cinf(P_1)[[\lambda]], \pi^\1)
        \stackrel{\Phi^\1}{\longrightarrow}
        (\Cinf(S)[[\lambda]], \omega)
        \stackrel{\Phi^\2}\longleftarrow
        (\Cinf(P_2)[[\lambda]], \pi^\2),
\end{equation}
one can naturally modify it  by formal Poisson diffeomorphisms of $P_1$, $P_2$ or $S$. For example, a formal diffeomorphism $\exp(\mathcal{L}_Z)$ on $S$ gives rise to a new equivalence bimodule
\begin{equation}\label{eq:changeS}
(\Cinf(P_1)[[\lambda]], \pi^\1)
        \stackrel{\Psi^\1}{\longrightarrow}
        (\Cinf(S)[[\lambda]], \sigma)
        \stackrel{\Psi^\2}\longleftarrow
        (\Cinf(P_2)[[\lambda]], \pi^\2),
\end{equation}
with $\Psi^\ind = \exp(\mathcal{L}_Z)\Phi^\ind$, $i=1,2$, and $\sigma = \exp(\mathcal{L}_Z)\omega$. We will see now a less trivial way to modify equivalence bimodules using gauge transformations rather than maps.

In the classical geometric setting \cite{SeveraWeinstein2001}, given a Poisson structure $\pi$ on $P$ and a closed 2-form $B\in \Omega^2(P)$ such that $\id+ B^\flat\pi^\sharp\colon T^*P\to T^*P$ is invertible,  we obtain a new Poisson structure $\tau_B(\pi)$ on $P$ defined by the bundle map
$(\tau_{B}\pi)^\sharp:=\pi^\sharp(\id + B^\flat\pi^\sharp)^{-1}\colon
T^*P \to TP$. This operation is called {\em gauge transformation} of $\pi$ by $B$, while the closed 2-form $B$ is referred to as {\em B-field}. If we now have an equivalence bimodule
$$
(P_1, \pi_1)
        \stackrel{J_1}{\longleftarrow}
        (S, \omega)
        \stackrel{J_2}\longrightarrow
        (P_2, \pi_2)
$$
and a closed 2-form $B \in \Omega^2(P_1)$ with $\id+ B^\flat\pi_1^\sharp$ invertible, it is proven in \cite[Section~3]{BursztynRadko2003} that $\omega_B=\omega+ J_1^* B$ is symplectic and
$$
(P_1, \tau_B(\pi_1))
        \stackrel{J_1}{\longleftarrow}
        (S, \omega_B)
        \stackrel{J_2}\longrightarrow
        (P_2, \pi_2)
$$
is an equivalence bimodule. Our goal is to extend this result to the formal setting.

In the formal context, {\em $B$-fields} will be formal series $B = \sum_{j=0}^\infty\lambda^j B_j\in\Omega^2(P)[[\lambda]]$ of closed 2-forms.
For a formal Poisson structure
$\pi = \sum_{j=0}^\infty\lambda^j\pi_j$, we consider the map
$(\id+B^\flat\pi^\sharp)\colon\Omega^1(P)[[\lambda]] \to
\Omega^1(P)[[\lambda]]$. Notice that this map is invertible if and only if it is in zeroth order, i.e., if and only if $(\id +B_0^\flat\pi_0^\sharp)$ is invertible.
In this case, as shown in \cite{BDW-Characteristicclasses-starproducts}, we obtain a new formal Poisson structure $\tau_B(\pi)$ on $P$ via the property   \begin{equation}\label{eq:tauB}
    (\tau_B(\pi))^\sharp
    :=
    \pi^\sharp(\id+B^\flat\pi^\sharp)^{-1}\colon
    \Omega^1(P)[[\lambda]] \to \mathfrak{X}(P)[[\lambda]].
\end{equation}

We observe that Poisson maps behave well with respect to $B$-fields.

\begin{lemma}
    \label{lem:BPoiss}
Consider a Poisson morphism $\Phi = \exp(\mathcal{L}_Z)J^*\colon (C^\infty(P)[[\lambda]],\pi) \to (C^\infty(S)[[\lambda]],\omega)$, where $\pi$ is a formal Poisson structure and $\omega$ is a formal symplectic structure. Let $B=\sum_{j=0}^\infty \lambda^j B_j$ be a $B$-field on $P$ with
$\id +B_0^\flat\pi_0^\sharp$ invertible and $\omega_B = \omega+\exp(\mathcal{L}_{Z})(J^*B)$. Then $\omega_B$ is symplectic and
$\Phi\colon (C^\infty(P)[[\lambda]],\tau_B(\pi)) \to (C^\infty(S)[[\lambda]],\omega_B)$ is a Poisson morphism.
\end{lemma}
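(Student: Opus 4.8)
The strategy is to reduce to the classical (zeroth-order) gauge-transformation result of \cite[Section~3]{BursztynRadko2003} by pulling everything back to $P$ along $\Phi$, and then to verify the formal statement order by order using the algebraic characterization of gauge transformations via the map $(\id + B^\flat \pi^\sharp)^{-1}$ from \eqref{eq:tauB}.

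First I would dispose of the symplectic claim. Since $\exp(\mathcal{L}_Z)$ preserves the de Rham differential and $J^*$ commutes with $d$, the form $\exp(\mathcal{L}_Z)(J^*B)$ is closed, so $\omega_B = \omega + \exp(\mathcal{L}_Z)(J^*B)$ is a closed formal 2-form; its zeroth-order term is $\omega_0 + J^*B_0$, which is nondegenerate because $\omega_0$ is symplectic and adding a closed 2-form keeps nondegeneracy generically — more precisely, I would invoke exactly the computation in \cite[Section~3]{BursztynRadko2003} showing that $\omega_0 + J_1^*B_0$ is symplectic whenever $\id + B_0^\flat \pi_0^\sharp$ is invertible and $J_1$ is the anchor of a dual pair (here the relevant classical map is $J$, with $\Phi_0 = J^*$ a Poisson map and $\omega_0$ symplectic). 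Nondegeneracy in zeroth order forces nondegeneracy of the whole formal series, so $\omega_B$ is a formal symplectic structure.

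For the Poisson-morphism claim, the cleanest route is to absorb the formal diffeomorphism: write $\Phi = \exp(\mathcal{L}_Z) J^*$ and note that $\exp(\mathcal{L}_Z)$ is an isomorphism of Poisson algebras from $(C^\infty(S)[[\lambda]], \omega)$ to $(C^\infty(S)[[\lambda]], \exp(\mathcal{L}_{-Z})\omega)$, and likewise from $(C^\infty(S)[[\lambda]],\omega_B)$ to $(C^\infty(S)[[\lambda]], \omega + J^*B)$. So it suffices to prove the statement for $\Phi = J^*$, i.e. that $J^*\colon (C^\infty(P)[[\lambda]], \tau_B(\pi)) \to (C^\infty(S)[[\lambda]], \omega + J^*B)$ is Poisson, given that $J^*\colon (C^\infty(P)[[\lambda]],\pi)\to (C^\infty(S)[[\lambda]],\omega)$ is Poisson. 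At this point I would argue at the level of the bundle maps $\sharp$ and $\flat$: the Poisson condition on $J^*$ is equivalent to a compatibility between $\pi^\sharp$ on $P$ and $(\omega^{-1})^\sharp$ on $S$ relative to $dJ$ (the graph of $\pi$ being $J$-related to the graph of $\omega^{-1}$, in the Dirac-geometric language), and gauging both sides by $B$ upstairs, namely replacing $\omega$ by $\omega + J^*B$ and $\pi$ by $\tau_B(\pi)$, corresponds to applying the same Courant-algebroid/$B$-field automorphism on both Dirac structures, which is $dJ$-compatible precisely because $B$ downstairs pulls back to $J^*B$ upstairs. Equivalently, and more concretely, I would verify that $(\id + B^\flat\pi^\sharp)^{-1}$ on $P$ intertwines with $(\id + (J^*B)^\flat (\omega^{-1})^\sharp)^{-1}$ on $S$ under the maps induced by $J$, and then the identity $(\tau_B\pi)^\sharp = \pi^\sharp(\id + B^\flat\pi^\sharp)^{-1}$ together with the corresponding inversion formula for $\omega \mapsto \omega + J^*B$ gives the Poisson-relatedness. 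Since all these are equalities of $\mathbb{K}[[\lambda]]$-linear bundle maps whose zeroth-order versions are exactly the classical identities of \cite{SeveraWeinstein2001, BursztynRadko2003}, the formal statement follows by $\lambda$-linearity with no further work.

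The main obstacle is the middle step: translating "$J^*$ is a Poisson morphism" into a clean statement about bundle maps that transforms transparently under gauging. The subtlety is that $J^*\colon (C^\infty(P)[[\lambda]],\pi) \to (C^\infty(S)[[\lambda]],\omega)$ being Poisson does not say $dJ$ relates $\pi$ to $\omega^{-1}$ as bivectors in the naive sense (a Poisson map need not push forward the bivector), so I must phrase it correctly — either as the statement that $\{J^*f, J^*g\}_\omega = J^*\{f,g\}_\pi$ directly, and then compute both sides after gauging using the explicit formula \eqref{eq:tauB} and the analogous formula $(\omega+J^*B)^{-1} = \omega^{-1}(\id + (J^*B)^\flat(\omega^{-1})^\sharp)^{-1}$, or in Dirac-geometric terms as the backward image of Dirac structures being compatible with $\tau_B$. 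Once this bookkeeping is set up, invertibility of $\id + B_0^\flat\pi_0^\sharp$ is exactly what guarantees all the inverses exist as formal power series, and the argument closes.
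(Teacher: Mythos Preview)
Your proposal is correct and follows essentially the same route as the paper: reduce to $\Phi=J^*$ by absorbing $\exp(\mathcal{L}_Z)$, use \cite[Section~3]{BursztynRadko2003} for nondegeneracy of $\omega_0+J^*B_0$, and then verify the Poisson-morphism claim via the intertwining of $(\id+B^\flat\pi^\sharp)^{-1}$ with $(\id+(J^*B)^\flat(\omega^\flat)^{-1})^{-1}$ under $J^*$. The paper resolves your ``main obstacle'' exactly as you anticipate in your concrete option: it phrases the Poisson condition on $J^*$ as $J_*(\omega^\flat)^{-1}J^*\alpha=\pi^\sharp(\alpha)$ for all $\alpha\in\Omega^1(P)[[\lambda]]$, derives $(\id+(J^*B)^\flat(\omega^\flat)^{-1})J^*=J^*(\id+B^\flat\pi^\sharp)$ from this, inverts, and reads off the result.
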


\begin{proof}
Composing $\Phi$ with the formal diffeomorphism $\exp(-\mathcal{L}_Z)$, we see that there is no loss in generality in assuming that  $\Phi=J^*$ for a smooth map $J\colon S\to P$. The fact that $\id +B_0^\flat\pi_0^\sharp$ is invertible guarantees that $\omega_0 + J^*B_0$ is nondegenerate (\cite[Section~3]{BursztynRadko2003}); since this is the zeroth order term of  $\omega_B = \omega+ J^*B$, it follows that $\omega_B$ is symplectic. So the corresponding map $\omega_B^\flat = \omega^\flat+ (J^*B)^\flat = (\id + (J^*B)^\flat(\omega^\flat)^{-1})\omega^\flat$ is invertible, which ensures that
$$
\id + (J^*B)^\flat(\omega^\flat)^{-1}\colon \Omega^1(S)[[\lambda]]\to \Omega^1(S)[[\lambda]]
$$
is invertible.

The fact that $\Phi=J^*\colon (C^\infty(P)[[\lambda]],\pi) \to (C^\infty(S)[[\lambda]],\omega)$ is a Poisson morphism can be phrased as the condition that, for any $\alpha \in \Omega^1(P)[[\lambda]]$, $J_*(\omega^\flat)^{-1}J^*\alpha = \pi^\sharp (\alpha)$ (here $J_*$  denotes ``$J$-relation'' of vector fields, naturally extended to the formal context). On the other hand, if $X$ is a formal vector field on $P$ such that $X=J_*Y$, for $Y\in \mathfrak{X}(S)[[\lambda]]$, then $(J^*B)^\flat (Y)=J^*B^\flat (X)$. It then follows that $(\id + (J^*B)^\flat(\omega^\flat)^{-1})J^* = J^*(\id+B^\flat\pi^\sharp)$, or, by taking inverses,
\begin{equation}\label{eq:J*rel}
(\id + (J^*B)^\flat(\omega^\flat)^{-1})^{-1}J^* = J^*(\id+B^\flat\pi^\sharp)^{-1}.
\end{equation}

Taking $\alpha = (\id + B^\flat\pi^\sharp)^{-1}\beta$ for $\beta\in \Omega^1(P)[[\lambda]]$, the condition that $\Phi=J^*\colon (C^\infty(P)[[\lambda]],\pi) \to (C^\infty(S)[[\lambda]],\omega)$ is a Poisson morphism says that
$$
J_*(\omega^\flat)^{-1}J^*(\id + B^\flat\pi^\sharp)^{-1}\beta = \pi^\sharp (\id + B^\flat\pi^\sharp)^{-1}\beta = (\tau_B(\pi))^\sharp(\beta).
$$
On the other hand, using \eqref{eq:J*rel}, we have that the left-hand side of this equation is
$$
J_*(\omega^\flat)^{-1}(\id + (J^*B)^\flat(\omega^\flat)^{-1})^{-1}J^*\beta
= J_*(\omega^\flat + (J^*B)^\flat)^{-1}J^*\beta = J_*(\omega_B^\flat)^{-1} J^*\beta.
$$
It follows that $J_*(\omega_B^\flat)^{-1} J^*\beta = (\tau_B(\phi))^\sharp(\beta)$ for all $\beta\in \Omega^1(P)[\lambda]]$, which is the condition for
$\Phi=J^*\colon (C^\infty(P)[[\lambda]],\tau_B(\pi)) \to (C^\infty(S)[[\lambda]],\omega_B)$ being a Poisson morphism.
\end{proof}

The following result provides many examples of Morita equivalent formal Poisson structures generated by $B$-fields.

\begin{theorem}
    \label{Bfieldactionondualpairs}%
    Consider an equivalence bimodule
    \begin{equation}
        \label{initialME}
        (\Cinf(P_1)[[\lambda]], \pi^\1)
        \stackrel{\Phi^\1}{\longrightarrow}
        (\Cinf(S)[[\lambda]], \omega)
        \stackrel{\Phi^\2}\longleftarrow
        (\Cinf(P_2)[[\lambda]], \pi^\2)
    \end{equation}
    and a $B$-field
    $B =\sum_{j=0}^\infty\lambda^jB_j$ on $P_1$
    such that  $(\id +B_0^\flat(\pi^\1_0)^\sharp)$ is invertible.
    Write $\Phi^\1=\exp(\mathcal{L}_{Z^\1})J_1^*$.
     Then
    \begin{equation}
        \label{newdualME}
        (\Cinf(P_1)[[\lambda]], \tau_B(\pi^\1))
        \stackrel{\Phi^\1}{\longrightarrow}
        (\Cinf(S)[[\lambda]], \omega_B)
        \stackrel{\Phi^\2}\longleftarrow
        (\Cinf(P_2)[[\lambda]], \pi^\2)
    \end{equation}
is an equivalence bimodule, where $\omega_B:=\omega+\exp(\mathcal{L}_{Z^\1})(J_1^*B)$.
\end{theorem}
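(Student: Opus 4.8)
The plan is to check one by one the defining conditions of a formal equivalence bimodule for \eqref{newdualME}. Two of them are immediate: by Lemma~\ref{lem:BPoiss} applied to $\Phi^\1=\exp(\mathcal{L}_{Z^\1})J_1^*$ and the $B$-field $B$, the form $\omega_B$ is symplectic and $\Phi^\1\colon(\Cinf(P_1)[[\lambda]],\tau_B(\pi^\1))\to(\Cinf(S)[[\lambda]],\omega_B)$ is a Poisson morphism (the hypothesis that $\id+B_0^\flat(\pi^\1_0)^\sharp$ is invertible being exactly what that lemma requires); and setting $\lambda=0$ in \eqref{newdualME} gives $(P_1,\tau_{B_0}(\pi^\1_0))\xleftarrow{J_1}(S,\omega_0+J_1^*B_0)\xrightarrow{J_2}(P_2,\pi^\2_0)$ (the exponential terms contribute only in positive order in $\lambda$), which is a classical equivalence bimodule by the gauge-transformation result of \cite[Section~3]{BursztynRadko2003} since the zeroth order of \eqref{initialME} is one by hypothesis. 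What remains is to show that $\Phi^\2$ is anti-Poisson and that $\Phi^\1(\Cinf(P_1)[[\lambda]])$ and $\Phi^\2(\Cinf(P_2)[[\lambda]])$ Poisson commute, both with respect to $\omega_B$.

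For this I would first reduce to the case $\Phi^\2=J_2^*$ by composing the whole diagram with the formal diffeomorphism $\exp(-\mathcal{L}_{Z^\2})$ of $S$ — a Poisson isomorphism carrying equivalence bimodules to equivalence bimodules, cf.\ the discussion around \eqref{eq:changeS}. This replaces $\omega$ by $\exp(-\mathcal{L}_{Z^\2})\omega$, turns $\Phi^\2$ into $J_2^*$, turns $\Phi^\1$ into $\exp(\mathcal{L}_W)J_1^*$ for some $W\in\lambda\mathfrak{X}(S)[[\lambda]]$ (Baker--Campbell--Hausdorff), and replaces $\omega_B$ by $\omega+C$ with $C:=\exp(\mathcal{L}_W)(J_1^*B)$. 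Since the two properties in question are equivalent to their counterparts for the transformed data, we may assume $\Phi^\1=\exp(\mathcal{L}_W)J_1^*$, $\omega_B=\omega+C$, and that $(\Cinf(P_1)[[\lambda]],\pi^\1)\xrightarrow{\Phi^\1}(\Cinf(S)[[\lambda]],\omega)\xleftarrow{J_2^*}(\Cinf(P_2)[[\lambda]],\pi^\2)$ is an equivalence bimodule. The heart of the argument is then the inclusion of $\Cinf(S)[[\lambda]]$-submodules of $\Omega^1(S)[[\lambda]]$
\begin{equation*}
    \operatorname{Im}(C^\flat)\ \subseteq\ \omega^\flat\big((\ker dJ_2)[[\lambda]]\big).
\end{equation*}
To prove it, write $B$ locally as $\sum_k da_k\wedge db_k$ with $a_k,b_k\in\Cinf(P_1)[[\lambda]]$; since $\Phi^\1=\exp(\mathcal{L}_W)J_1^*$ is an algebra morphism commuting with $d$, this yields $C=\sum_k d\Phi^\1(a_k)\wedge d\Phi^\1(b_k)$, so $\operatorname{Im}(C^\flat)$ lies in the $\Cinf(S)[[\lambda]]$-span of $\{d\Phi^\1(g):g\in\Cinf(P_1)[[\lambda]]\}$. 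For each such $g$, Poisson commutation of the images in $(\Cinf(S)[[\lambda]],\omega)$ gives $\{\Phi^\1(g),J_2^*(m)\}_\omega=0$ for all $m$, which (as $J_2$ is a submersion) forces $(\omega^\flat)^{-1}d\Phi^\1(g)$ to be tangent to the fibers of $J_2$; hence $d\Phi^\1(g)\in\omega^\flat((\ker dJ_2)[[\lambda]])$, and the inclusion follows since its right-hand side is a submodule.

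Given the inclusion, both claims drop out of $(\omega_B^\flat)^{-1}=(\omega^\flat)^{-1}(\id+N)^{-1}$ with $N:=C^\flat(\omega^\flat)^{-1}$, where $\id+N=\omega_B^\flat(\omega^\flat)^{-1}$ is invertible since $\omega_B$ is symplectic. Writing $(\id+N)^{-1}\xi=\xi-\eta$ with $\eta=N(\id+N)^{-1}\xi\in\operatorname{Im}(C^\flat)$, the inclusion makes $(\omega^\flat)^{-1}\eta$ tangent to the fibers of $J_2$. Taking $\xi=J_2^*\alpha$ for $\alpha\in\Omega^1(P_2)[[\lambda]]$ and using that $J_2$ is anti-Poisson for $\omega$ — i.e.\ $(\omega^\flat)^{-1}(J_2^*\alpha)$ is $J_2$-related to $-(\pi^\2)^\sharp(\alpha)$ — shows that $(\omega_B^\flat)^{-1}(J_2^*\alpha)=(\omega^\flat)^{-1}(J_2^*\alpha)-(\omega^\flat)^{-1}\eta$ is $J_2$-related to $-(\pi^\2)^\sharp(\alpha)$, i.e.\ $\Phi^\2=J_2^*$ is anti-Poisson for $\omega_B$. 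Taking instead $\xi=d\Phi^\1(f)$ for $f\in\Cinf(P_1)[[\lambda]]$, and using that $(\omega^\flat)^{-1}d\Phi^\1(f)$ is itself tangent to the fibers of $J_2$ by the previous paragraph, the same computation shows $(\omega_B^\flat)^{-1}d\Phi^\1(f)$ is tangent to those fibers, hence $\{\Phi^\1(f),J_2^*(m)\}_{\omega_B}=0$ for all $m$; this gives the Poisson commutation of the images. Undoing the reduction recovers these properties for the original data, completing the verification.

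The step I expect to require the most care is the presence of the formal diffeomorphism $\exp(\mathcal{L}_{Z^\1})$ in $\omega_B=\omega+\exp(\mathcal{L}_{Z^\1})(J_1^*B)$: the correction $\omega_B-\omega$ is not literally pulled back from $P_1$, so one cannot argue purely in terms of the classical fibration $J_1$; it is the local presentation of $B$ together with the Poisson-commutation hypothesis that makes the displayed inclusion hold in every order of $\lambda$.
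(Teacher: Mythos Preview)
Your argument is correct, but the paper's route is shorter and avoids the reduction step entirely. The paper proves a single lemma (Lemma~\ref{lem:hamVFB}): for every $f\in\Cinf(P_2)[[\lambda]]$ the Hamiltonian vector fields of $\Phi^\2(f)$ with respect to $\omega$ and $\omega_B$ coincide. The proof is that Poisson commutation in \eqref{initialME} gives $i_{X_{\Phi^\2(f)}}\big(\exp(\mathcal{L}_{Z^\1})J_1^*dg\big)=0$ for all $g$, hence $i_{X_{\Phi^\2(f)}}\big(\exp(\mathcal{L}_{Z^\1})J_1^*\alpha\big)=0$ for any form $\alpha$ (forms are locally generated by differentials of functions, and $\exp(\mathcal{L}_{Z^\1})$ is an algebra automorphism commuting with $d$), in particular for $\alpha=B$. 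From $X_{\Phi^\2(f)}=X^B_{\Phi^\2(f)}$ one gets $\{\Phi^\2(f),h\}_\omega=\{\Phi^\2(f),h\}_{\omega_B}$ for all $h$, and both the anti-Poisson property of $\Phi^\2$ and the commutation of images drop out in one line each.

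Your inclusion $\operatorname{Im}(C^\flat)\subseteq\omega^\flat\big((\ker dJ_2)[[\lambda]]\big)$ is essentially the dual statement to $i_{X_{\Phi^\2(f)}}C=0$, and your local presentation $B=\sum_k da_k\wedge db_k$ plays the same role as the paper's passage from $dg$ to arbitrary forms. So the core mechanism is identical; what your approach adds is (i) the preliminary conjugation by $\exp(-\mathcal{L}_{Z^\2})$ to force $\Phi^\2=J_2^*$, and (ii) the explicit manipulation of $(\omega_B^\flat)^{-1}=(\omega^\flat)^{-1}(\id+N)^{-1}$. Neither is needed: the paper's lemma works for arbitrary $\Phi^\2$, and once Hamiltonian vector fields agree there is no inversion to perform. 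Your worry in the final paragraph---that $\exp(\mathcal{L}_{Z^\1})(J_1^*B)$ is not a genuine pullback---is exactly what the paper's lemma handles cleanly: the point is not that the correction is pulled back along $J_1$, but that it is built from the $1$-forms $d\Phi^\1(g)$, which are annihilated by $X_{\Phi^\2(f)}$.
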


For the proof, we start with a lemma.
For $h \in C^\infty(S)[[\lambda]]$, denote by $X_h$ and $X_h^B$ its hamiltonian vector fields relative to the symplectic forms $\omega$ and $\omega_B$, respectively.

\begin{lemma}
    \label{lem:hamVFB}
For all $f\in C^\infty(P_2)[[\lambda]]$, we have
$X_{\Phi^\2(f)}= X^B_{\Phi^\2(f)}.$
\end{lemma}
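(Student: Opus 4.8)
The plan is to show that the two Hamiltonian vector fields $X_{\Phi^\2(f)}$ and $X^B_{\Phi^\2(f)}$ coincide by exploiting the fact that $\omega_B$ and $\omega$ differ only by a term ``pulled back from $P_1$'', which is Poisson-orthogonal to the image of $\Phi^\2$. Concretely, writing $\omega_B = \omega + \exp(\mathcal{L}_{Z^\1})(J_1^*B)$ and using that $\omega_B^\flat = \omega^\flat + (\exp(\mathcal{L}_{Z^\1})J_1^*B)^\flat$, the defining equations are $i_{X_{\Phi^\2(f)}}\omega = \der\Phi^\2(f)$ and $i_{X^B_{\Phi^\2(f)}}\omega_B = \der\Phi^\2(f)$. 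So it suffices to prove that $i_{X_{\Phi^\2(f)}}(\exp(\mathcal{L}_{Z^\1})(J_1^*B)) = 0$, for then $X_{\Phi^\2(f)}$ also satisfies the equation defining $X^B_{\Phi^\2(f)}$, and nondegeneracy of $\omega_B$ (established in Lemma~\ref{lem:BPoiss}) gives uniqueness.

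The key step is therefore to understand $X_{\Phi^\2(f)}$ geometrically. After conjugating everything by the formal diffeomorphism $\exp(-\mathcal{L}_{Z^\1})$ — equivalently, applying $\exp(-\mathcal{L}_{Z^\1})$, which is a ring and bracket isomorphism — we may reduce to the case $\Phi^\1 = J_1^*$, so that $\omega_B = \omega + J_1^*B$ and we must show $i_{X_{\Phi^\2(f)}}(J_1^*B) = 0$. Now $\Phi^\2(f)$ Poisson-commutes with everything in $\Phi^\1(C^\infty(P_1)[[\lambda]]) = J_1^*(C^\infty(P_1)[[\lambda]])$ by the bimodule condition \eqref{altPoissoncommutation}; this says $\mathcal{L}_{X_{\Phi^\2(f)}}(J_1^*h) = 0$ for all $h \in C^\infty(P_1)[[\lambda]]$, i.e. $X_{\Phi^\2(f)}$ is tangent to the fibers of $J_1$ (in the formal sense: its contraction with any $J_1^*\der h$ vanishes). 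The classical geometric fact that the fiber-tangent distributions of $J_1$ and $J_2$ are symplectic-orthogonal, together with $\Phi^\2 = \exp(\mathcal{L}_{Z^\2})J_2^*$, should then be leveraged — but actually the cleaner route is purely algebraic: since $J_1^*B$ is a $2$-form pulled back along $J_1$, its contraction with a vector field tangent to the $J_1$-fibers vanishes identically, because $(J_1^*B)(Y,\cdot) = B(J_{1*}Y, J_{1*}\cdot)$ and $J_{1*}Y = 0$ when $Y$ is $J_1$-vertical. I would phrase this carefully in the formal category, using that $X_{\Phi^\2(f)} = \sum_{k\geq 0}\lambda^k Y_k$ with each $Y_0,Y_1,\dots$ a (possibly non-vertical in higher order) vector field, and checking the vertical-in-all-orders property order by order from \eqref{altPoissoncommutation}.

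So the concrete sequence of steps is: (i) reduce to $\Phi^\1 = J_1^*$ by conjugating with $\exp(-\mathcal{L}_{Z^\1})$, checking this is harmless since it transforms $\omega \mapsto \exp(\mathcal{L}_{Z^\1})\omega$ and intertwines the relevant Hamiltonian vector fields covariantly; (ii) from the commutation \eqref{altPoissoncommutation}, deduce $\mathcal{L}_{X_{\Phi^\2(f)}} J_1^*h = 0$ for all $h$, hence $X_{\Phi^\2(f)}$ is ``$J_1$-vertical'' in every order of $\lambda$; (iii) conclude $i_{X_{\Phi^\2(f)}} J_1^*B = 0$ from the pullback structure of $J_1^*B$; (iv) combine with $i_{X_{\Phi^\2(f)}}\omega = \der\Phi^\2(f) = i_{X^B_{\Phi^\2(f)}}\omega_B$ and nondegeneracy of $\omega_B$ to identify the two Hamiltonian vector fields. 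The main obstacle I anticipate is step (ii)–(iii): one must make precise what ``$J_1$-vertical'' means for a formal vector field and that the contraction identity $i_Y J_1^*B = 0$ survives $\lambda$-adically, given that $J_1$ is only a surjective submersion (not a fibration) — but since $J_1^*B$ is literally a pullback form, the pointwise argument $B(\der J_1(Y), -) = 0$ works as soon as $\der J_1(Y) = 0$, and the latter is exactly the condition $\mathcal{L}_Y J_1^* h = 0$ for all $h$, so the argument closes. A minor subtlety is that, strictly, \eqref{altPoissoncommutation} gives $\mathcal{L}_{X_{\Phi^\2(f)}}(\Phi^\1(g)) = 0$ rather than with $J_1^*$, which is why the reduction in step (i) is performed first.
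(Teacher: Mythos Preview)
Your proposal is correct and follows essentially the same route as the paper: use the commutation condition \eqref{altPoissoncommutation} to see that $X_{\Phi^\2(f)}$ annihilates all $1$-forms coming from $P_1$ via $\Phi^\1$, hence annihilates the extra term $\exp(\mathcal{L}_{Z^\1})(J_1^*B)$, and then conclude by nondegeneracy of $\omega_B$. The only cosmetic difference is that you first conjugate by $\exp(-\mathcal{L}_{Z^\1})$ to reduce to $\Phi^\1=J_1^*$, whereas the paper works directly with $\exp(\mathcal{L}_{Z^\1})J_1^*$ and passes from $i_{X_{\Phi^\2(f)}}(\exp(\mathcal{L}_{Z^\1})J_1^*dg)=0$ for all $g$ to the same vanishing for an arbitrary form $\alpha$ in place of $dg$; unwinding that step is exactly your conjugation argument.
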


\begin{proof}
    Condition \eqref{altPoissoncommutation} of an equivalence bimodule says that, for any $f\in C^\infty(P_2)[[\lambda]]$,
$$
    \{ \Phi^\2(f), \Phi^\1(g)\}_\omega = i_{X_{\Phi^\2(f)}}d(\exp(\mathcal{L}_{Z^\1})J_1^*g)=
    i_{X_{\Phi^\2(f)}}(\exp(\mathcal{L}_{Z^\1})J_1^*dg)=0
$$
for all $g\in C^\infty(P_1)[[\lambda]]$, and the last equality implies that $i_{X_{\Phi^\2(f)}}(\exp(\mathcal{L}_{Z^\1})J_1^*\alpha)=0$ for any differential form $\alpha$ on $P_1$. In particular, $i_{X_{\Phi^\2(f)}}(\exp(\mathcal{L}_{Z^\1})J_1^*B)=0$. It follows that
$$
i_{X_{\Phi^\2(f)}}\omega_B = i_{X_{\Phi^\2(f)}}\omega = d\Phi^\2(f),
$$
and hence $X_{\Phi^\2(f)}= X^B_{\Phi^\2(f)}$.
\end{proof}

We now prove Theorem~\ref{Bfieldactionondualpairs}.

\begin{proof}
The zeroth order diagram corresponding to \eqref{newdualME} is
$$
(P_1, \tau_{B_0}(\pi^\1_0))
        \stackrel{J_1}{\longleftarrow}
        (S, \omega + J_1^*B_0)
        \stackrel{J_2}\longrightarrow
        (P_2, \pi^\2_0),
$$
i.e., the gauge transformation by $B_0$ of the zeroth order equivalence bimodule corresponding to \eqref{initialME}. The fact that this is again an equivalence bimodule is verified in \cite[Section~3]{BursztynRadko2003}.

To conclude that \eqref{newdualME} is an equivalence bimodule, we must now check that: (1) $\Phi^\1$ is a Poisson morphism, (2) $\Phi^\1(C^\infty(P_1)[[\lambda]])$ and $\Phi^\2(C^\infty(P_2)[[\lambda]])$ Poisson commute with respect to $\omega_B$, and (3) $\Phi^\2$ is an anti-Poisson morphism. Condition (1) follows directly from Lemma~\ref{lem:BPoiss}.
We note that (2) and (3) are direct consequences of Lemma~\ref{lem:hamVFB}, which shows that, for any $f\in C^\infty(P_2)[[\lambda]]$ and $h\in C^\infty(S)[[\lambda]]$, we have
$$
\{\Phi^\2(f),h\}_\omega = \mathcal{L}_{X_{\Phi^\2(f)}} h = \mathcal{L}_{X^B_{\Phi^\2(f)}} h = \{\Phi^\2(f),h\}_{\omega_B}.
$$
So for $g\in C^\infty(P_2)[[\lambda]]$ and $h=\Phi^\2(g)$,
$$
\Phi^\2(\{f,g\}^\2)= - \{\Phi^\2(f),\Phi^\2(g)\}_\omega = - \{\Phi^\2(f),\Phi^\2(g)\}_{\omega_B},
$$
which proves (3). Similarly, for  $g\in C^\infty(P_1)[[\lambda]]$,
$$
0=\{\Phi^\2(f),\Phi^\1(g)\}_\omega=
\{\Phi^\2(f),\Phi^\1(g)\}_{\omega_B},
$$
which proves (2).
\end{proof}


\section{Deformation of Poisson morphisms}
\label{sec:defmorphism}

As a first step in analyzing Problem~\ref{prob}, we consider the problem of deforming a Poisson morphism, focusing on its cohomological obstructions. Our discussion here will be purely algebraic.

Let $\alg{A}$ be a commutative algebra over a field $\mathbb{K} = \mathbb{R}$ or $\mathbb{C}$ (more generally, $\mathbb{K}$ could be a field of characteristic zero).
We denote by $\Der(\alg{A})$ the space of derivations of
 $\alg{A}$. We recall that $\Der(\alg{A}[[\lambda]]) = \Der(\alg{A})[[\lambda]]$, and  for $X\in \lambda\Der(\alg{A})[[\lambda]]$, $\exp(X)\colon \alg{A}[[\lambda]]\to \alg{A}[[\lambda]]$ is a ring automorphism (and any ring automorphism $\Phi=\sum_{k=0}^\infty \lambda^k \phi_k$ with $\phi_0=\id$ is of this form); in case $X$ is also a derivation of a Poisson bracket on $\alg{A}[[\lambda]]$, then  $\exp(X)$ is a Poisson automorphism. As in the geometric setting, two formal Poisson structures $\pi^\1$ and $\pi^\2$ on $\alg{A}[[\lambda]]$ are {\bf equivalent} if there exists  $X\in \lambda\Der(\alg{A})[[\lambda]]$ such that $\exp(X)\pi^\1=\pi^\2$.
Given a Poisson structure $\pi_0$ on $\alg{A}$, we denote by $\FPois_{\pi_0}(\alg{A})$ the set of equivalence classes of formal Poisson structures deforming $\pi_0$.

Let $\alg{A}$ and $\alg{B}$ be Poisson algebras over  $\mathbb{K}$ with Poisson brackets
$\pi_0 = \{\cdot, \cdot\}_{\alg{A}}$ and
$\sigma_0 = \{\cdot, \cdot\}_{\alg{B}}$, respectively. Throughout
this section we fix a Poisson morphism
\begin{equation}
    \label{eq:TheStartPoissonMorph}
    \phi_0\colon
    (\alg{A}, \{\cdot, \cdot\}_\alg{A})
    \rightarrow
    (\alg{B}, \{\cdot, \cdot\}_\alg{B}).
\end{equation}
We are interested in the problem of
deforming $\phi_0$ into a new Poisson morphism, once deformed Poisson
structures on $\alg{A}$ and $\alg{B}$ are fixed:
\begin{problem}
    \label{Deformationproblem}%
    Given formal Poisson deformations $\pi$ of $\pi_0$ and $\sigma$ of
    $\sigma_0$, find a derivation
    $X \in \lambda\Der(\alg{B})[[\lambda]]$ such that
    \begin{equation}
        \label{eq:FindPoissonMorph}
        \Phi
        :=
        \exp(X)\phi_0\colon
        (\alg{A}[[\lambda]], \pi)
        \rightarrow
        (\alg{B}[[\lambda]], \sigma)
    \end{equation}
    is a Poisson morphism.
\end{problem}

We observe that there are some natural degrees of freedom in solving the previous problem. We say that a derivation $V \in \Der(\alg{B})[[\lambda]]$ is {\bf vertical} if  $V \circ \phi_0 = 0$; note that the vertical derivations form a Lie
subalgebra of all derivations. Given a solution $X$ to Problem~\ref{Deformationproblem}, it is clear that the derivation $\overline{X}$
determined by
 \begin{equation}
        \label{eq:EquivalenceXccX}
        \exp(\overline{X}) = \exp(Y)\exp(X)\exp(V),
    \end{equation}
where $Y, V \in \lambda\Der(\alg{B})[[\lambda]]$, $V$ is vertical and $Y$ is a Poisson derivation of $\sigma$, is a new solution to the problem. In this case we say that the solutions $X$ and $\overline{X}$ are {\bf equivalent}.

We now discuss  the existence
and uniqueness of solutions to Problem~\ref{Deformationproblem}.

%
%

\subsection{The cohomology controlling the problem}
\label{subsec:CEcoh}

The undeformed Poisson morphism $\phi_0$ gives rise to a Lie algebra
morphism
\begin{equation}
    \label{eq:AtoEndosOfB}
    (\alg{A}, \{\cdot,\cdot\}_\alg{A})
    \rightarrow
    (\End_{\mathbb{K}}(\alg{B}), [\cdot,\cdot]),
    \quad
    \;\;\;
    \quad
    a \mapsto \{\phi_0(a), \cdot\}_\alg{B}.
\end{equation}
So we can see $\alg{B}$ as a left Lie-algebra module over
$(\alg{A}, \{\cdot, \cdot\}_\alg{A})$. We consider the
following standard Chevalley-Eilenberg complex: for $k = 0$ set
$\CCE^0(\alg{A}, \alg{B}) := \alg{B}$, and for
$k \in \mathbb{N}$, the $k$-cochains are
\begin{equation}
    \label{eq:CCEDef}
    \CCE^k(\alg{A}, \alg{B})
    :=
    \left\{
        D\colon\alg{A}^{\times k} \rightarrow \alg{B}
        \; \Big| \;
        D \textrm{ is multilinear and antisymmetric}
    \right\}.
\end{equation}
The Chevalley-Eilenberg differential $\delta$ is given, for
$D\in \CCE^k(\alg{A}, \alg{B})$ and
$a_0, \ldots, a_{k} \in \alg{A}$, by
\begin{align}\label{eq:CEdiff}
    &(\delta D)(a_0, \ldots, a_{k}) \\ \nonumber
    &\quad:=
    \sum_{j=0}^{k} (-1)^{j}
    \big\{
    \phi_0(a_j),
    D(a_0, \ldots, \widehat{a_j}, \ldots, a_{k})
    \big\}_\alg{B}
    +
    \sum_{i<j}(-1)^{i+j}
    D\big(
    \{a_i, a_j\}_\alg{A},
    a_0, \ldots,
    \widehat{a_i}, \ldots,
    \widehat{a_j}, \ldots, a_{k}
    \big).
\end{align}
The corresponding cohomology will be denoted by
$\HCE^\bullet(\alg{A}, \alg{B})$.
\begin{remark}
    \label{remark:CohomologyAlwaysForUndeformedPoisson}%
    We stress that the
    Chevalley-Eilenberg cohomologies that we consider throughout are with respect to the {\em undeformed} Poisson
    structures $\pi_0$ and $\sigma_0$ on $\alg{A}$ and $\alg{B}$,
    respectively.
\end{remark}
\begin{remark}
    \label{Modulestructurefortherelevantcomplex}%
    The spaces $\CCE^k(\alg{A}, \alg{B})$ can be given the structure
    of a (say, left) $\alg{B}$-module via
    \begin{equation}
        \label{eq:CCEBleftModule}
        (bD)(a_1, \ldots, a_k)
        :=
        bD(a_1,\ldots, a_k),
    \end{equation}
    for $b \in \alg{B}$, $D \in \CCE^k(\alg{A}, \alg{B})$,
    and $a_i\in\alg{A}$. Using $\phi_0,$ they also have a
    left $\alg{A}$-module structure via
    \begin{equation}
        \label{eq:CCEAleftModule}
        (aD)(a_1, \ldots, a_k)
        :=
        \phi_0(a)D(a_1, \ldots, a_k),
    \end{equation}
    where $a \in \alg{A}$. In general, the
    differential is not compatible with these module
    structures.
\end{remark}

The Chevalley-Eilenberg complex plays a central role in the study of
deformations of Lie brackets and Lie-algebra morphisms \cite{Nijenhuis1967-DefLieAlgebras, Nijenhuis1967-DefMorphismsLieGroups}. In order to handle Poisson algebras,
we need to consider a subcomplex taking into account the Leibniz rule of
Poisson brackets.  So we focus on
the special class of cochains given by multiderivations
of the associative product along $\phi_0$, i.e., for
$D \in \CCE^k(\alg{A}, \alg{B})$, we require the additional condition
\begin{equation}
    \label{dercochains}
    D(a_1, \ldots, a_la'_l, \ldots, a_k)
    =
    \phi_0(a_l)D(a_1, \ldots,a'_l, \ldots, a_k)
    +
    D(a_1, \ldots, a_l, \ldots, a_k)\phi_0(a'_l),
\end{equation}
for $a_1, \ldots, a_k, a'_l \in \alg{A}$ and $l = 1, \ldots, k$.  Let
$\CCEder^k(\alg{A}, \alg{B})$ denote the subset of
$\CCE^k(\alg{A}, \alg{B})$ satisfying \eqref{dercochains}. The differential $\delta$
restricts to this subset, so we have a subcomplex
$(\CCEder^k(\alg{A}, \alg{B}), \delta)$. The corresponding cohomology
will be denoted by $\HCEder^\bullet(\alg{A}, \alg{B})$. Note that the
inclusion at the level of cochains induces a map
$\HCEder^\bullet(\alg{A}, \alg{B})\rightarrow \HCE^\bullet(\alg{A},
\alg{B})$.

Given another Poisson morphism
$\psi\colon\alg{B}\to\alg{C}$, we have a map
$\phi_0^*\colon \CCE^\bullet(\alg{B}, \alg{C})\rightarrow
\CCE^\bullet(\alg{A}, \alg{C})$,
\begin{equation}
    \label{eq:GeneralPullBack}
    (\phi_0^*D)(a_1, \ldots, a_k)
    =
    D(\phi_0(a_1), \ldots, \phi_0(a_k)).
\end{equation}
A direct computation shows that this map is a chain
map. Moreover, taking into account that $\phi_0$ is a morphism of
commutative products, we see that the above map restricts to a
chain map
\begin{equation}
        \label{eq:ChainMapCCEderBCStuff}
        \phi_0^*\colon
        \CCEder^\bullet(\alg{B}, \alg{C})
        \rightarrow
        \CCEder^\bullet(\alg{A}, \alg{C}).
\end{equation}

Taking $\alg{C} = \alg{B}$ and $\psi = \id_\alg{B}$, we get a chain map
$\phi_0^*\colon \CCEder^\bullet(\alg{B}, \alg{B})
\rightarrow\CCEder^\bullet(\alg{A}, \alg{B})$. Notice that $\CCEder^1(\alg{B}, \alg{B}) = \Der(\alg{B})$.



We will need the concept of {\em horizontal lift} in this purely algebraic setting:

\begin{definition}[\textbf{Horizontal lift}]
    \label{horizlift}%
    A horizontal lift along
    $\phi_0\colon\alg{A}\rightarrow\alg{B}$ is an
    $\alg{A}$-linear map
    \begin{equation}
        h\colon \CCE^1(\alg{A}, \alg{B})\to
                \CCE^1(\alg{B}, \alg{B})=\Der(\alg{B}),\qquad D
        \; \mapsto \; D^h,
    \end{equation}
    such that for all
    $D \in \CCE^1(\alg{A}, \alg{B})$,
    \begin{equation}
        \label{horproperty}
        \phi_0^*D^h = D.
    \end{equation}
\end{definition}
Here the $\alg{A}$-linearity is also understood along
$\phi_0$, i.e.,
\begin{equation}
    \label{eq:HorLiftALinear}
    (aD)^h = \phi_0(a) D^h
\end{equation}
for all $a \in \alg{A}$ and
$D \in \CCE^1(\alg{A}, \alg{B})$.
It follows from the definition that two horizontal lifts $h_1$ and
$h_2$ differ by vertical derivation.

In the geometric context of interest, these horizontal lifts will be defined by usual horizontal
lifts along surjective submersions given by a connection (see Section~\ref{sec:ClassifyingMap}).

%
%

\subsection{Existence of deformations}
\label{subsec:DeformationExistence}

We now discuss
the existence of a derivation $X$ solving Problem~ \ref{Deformationproblem}; we will do that
assuming that we have a horizontal lift along
$\phi_0$ as in Definition~\ref{horizlift}.

We will look for a solution $\Phi = \exp(X)\phi_0$ of the problem in the usual inductive way. Suppose that we have found $X_{(k)} \in \lambda \Der(\alg{B})[[\lambda]]$
such that $\Phi_{(k)} := \exp(X_{(k)})$ solves Problem
\ref{Deformationproblem} up to order $k$, i.e., we have
\begin{equation}
    \label{Poissonmorphismconduptok}
    \Phi_{(k)}^{-1}\sigma
    \big(\Phi_{(k)}\phi_0(a_1), \Phi_{(k)}\phi_0(a_2)\big)
    =
    \phi_0(\pi(a_1, a_2))
    +
    \lambda^{k+1}R_{k+1}(a_1, a_2)
    + \cdots
\end{equation}
for all $a_1, a_2 \in \alg{A}$, and some bilinear map
$R_{k+1}\colon \alg{A} \times \alg{A} \rightarrow \alg{B}$.

\begin{lemma}
    \label{PropertiesofRkplusOne}%
    The map $R_{k+1}$ defined by
    Equation~\eqref{Poissonmorphismconduptok} is a $2$-cocycle in
    $\CCEder^\bullet(\alg{A}, \alg{B})$.
\end{lemma}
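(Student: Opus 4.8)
The plan is to show both that $R_{k+1}$ lies in $\CCEder^2(\alg{A},\alg{B})$ (i.e. satisfies the derivation property along $\phi_0$) and that $\delta R_{k+1}=0$. The two assertions require different inputs: the first follows from the Leibniz rule for $\sigma$ and $\pi$ together with the fact that $\Phi_{(k)}$ and $\phi_0$ are ring homomorphisms, while the second is a consequence of the Jacobi identities for $\pi$ and $\sigma$.

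For the derivation property, I would simply expand Equation~\eqref{Poissonmorphismconduptok} with $a_2$ replaced by a product $a_2 a_2'$. On the left-hand side, $\sigma$ is a biderivation of the commutative product on $\alg{B}[[\lambda]]$, and $\Phi_{(k)}=\exp(X_{(k)})$ is a ring automorphism, so $\Phi_{(k)}^{-1}\sigma\big(\Phi_{(k)}\phi_0(a_1),\Phi_{(k)}\phi_0(a_2 a_2')\big)$ distributes over the product $\phi_0(a_2)\phi_0(a_2')$ with the expected two-term Leibniz expansion, the coefficients being $\phi_0(a_2)$ and $\phi_0(a_2')$ (this uses $\phi_0(a_2a_2')=\phi_0(a_2)\phi_0(a_2')$, since $\Phi_{(k)}$ fixes these to zeroth order). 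The term $\phi_0(\pi(a_1,a_2a_2'))$ on the right-hand side satisfies the same Leibniz rule because $\pi$ is a biderivation on $\alg{A}[[\lambda]]$ and $\phi_0$ is multiplicative. Subtracting and reading off the coefficient of $\lambda^{k+1}$ gives exactly condition \eqref{dercochains} for $R_{k+1}$ in the second slot; antisymmetry of $R_{k+1}$ (inherited from that of $\sigma$, $\pi$ and the expression) then gives it in the first slot as well. So $R_{k+1}\in\CCEder^2(\alg{A},\alg{B})$.

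For the cocycle condition, the strategy is the standard Gerstenhaber-style argument: the obstruction to associativity/Jacobi at the next order is automatically closed. Concretely, introduce the bilinear map
\[
    \Sigma(a_1,a_2) := \Phi_{(k)}^{-1}\sigma\big(\Phi_{(k)}\phi_0(a_1),\Phi_{(k)}\phi_0(a_2)\big),
\]
which is antisymmetric, a biderivation along $\phi_0$, and by \eqref{Poissonmorphismconduptok} equals $\phi_0\circ\pi + \lambda^{k+1}R_{k+1}+\cdots$. Since $\sigma$ satisfies the Jacobi identity and $\Phi_{(k)}$ is a ring automorphism, $\Sigma$ satisfies the Jacobi identity modulo the failure of $\phi_0$ to intertwine the brackets beyond order $k$; more precisely, expanding the Jacobi identity for $\Sigma$, substituting $\Sigma = \phi_0\circ\pi + \lambda^{k+1}R_{k+1} + O(\lambda^{k+2})$ and using the Jacobi identity for $\pi$ (so that the $O(1)$ terms cancel), the coefficient of $\lambda^{k+1}$ yields precisely a sum of three terms of the shape $\{\phi_0(a_j), R_{k+1}(a_i,a_\ell)\}_\alg{B}$ plus three of the shape $R_{k+1}(\{a_i,a_j\}_\alg{A}, a_\ell)$, i.e. $(\delta R_{k+1})(a_0,a_1,a_2)=0$ after matching signs with \eqref{eq:CEdiff}.

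The main obstacle is purely bookkeeping: carefully tracking which terms in the $\lambda$-expansion of the Jacobi identity for $\Sigma$ survive at order $\lambda^{k+1}$, and checking that the surviving combination is literally $\delta R_{k+1}$ with the Chevalley-Eilenberg signs of \eqref{eq:CEdiff}, rather than some superficially different antisymmetrization. One subtlety worth isolating is that $\Phi_{(k)}$ being an \emph{automorphism} (so that conjugating $\sigma$ by it preserves the Jacobi identity and the biderivation property) is what lets us reduce to a clean statement about $\Sigma$ without carrying $X_{(k)}$ around explicitly. No serious analytic or cohomological input is needed beyond this; it is the classical ``obstruction is a cocycle'' computation, adapted to morphisms rather than brackets.
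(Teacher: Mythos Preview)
Your proposal is correct and follows essentially the same approach as the paper: both verify the derivation property via the Leibniz rule for $\sigma$ and $\pi$ together with the multiplicativity of $\Phi_{(k)}$ and $\phi_0$, and both obtain closedness by expanding the Jacobi identity for the conjugated bracket $\Phi_{(k)}^{-1}\sigma(\Phi_{(k)}(\cdot),\Phi_{(k)}(\cdot))$ applied to $\phi_0(a_i)$ and extracting the coefficient of $\lambda^{k+1}$. The only cosmetic difference is that you name this expression $\Sigma$, whereas the paper writes out the nested expression $\Phi_{(k)}^{-1}\sigma\big(\Phi_{(k)}\phi_0(a_1),\sigma(\Phi_{(k)}\phi_0(a_2),\Phi_{(k)}\phi_0(a_3))\big)$ directly; just take care that ``the Jacobi identity for $\Sigma$'' really means the Jacobi identity for the conjugated bracket evaluated on elements $\phi_0(a_i)$, since $\Sigma(a_1,\Sigma(a_2,a_3))$ does not literally type-check.
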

\begin{proof}
    Since $\pi$ and $\sigma$ are both antisymmetric in each order of
    $\lambda$, this also holds for $R_{k+1}$. Moreover, $\sigma$ and
    $\pi$ satisfy the Leibniz rule and $\Phi_{(k)}$ is an automorphism
    of the associative algebra $\alg{B}[[\lambda]]$, thus the left
    hand side in \eqref{Poissonmorphismconduptok} also satisfies the
    Leibniz rule. So the same holds for $R_{k+1}$. As for
    closedness, we compute, for $a_1, a_2, a_3 \in \alg{A}$:
    \begin{align*}
        &\Phi_{(k)}^{-1}\sigma\Big(
        \Phi_{(k)}\phi_0(a_1),
        \sigma\big(
        \Phi_{(k)}\phi_0(a_2),
        \Phi_{(k)}\phi_0(a_3)
        \big)
        \Big) \\
        &\quad=
        \Phi_{(k)}^{-1}\sigma\Big(
        \Phi_{(k)}\phi_0(a_1),
        \Phi_{(k)}\big(
        \phi_0(\pi(a_2, a_3))
        + \lambda^{k+1}R_{k+1}(a_2,a_3)
        + \cdots
        \big)
        \Big) \\
        &\quad=
        \Phi_{(k)}^{-1}\sigma\Big(
        \Phi_{(k)}\phi_0(a_1),
        \Phi_{(k)}\phi_0 (\pi(a_2, a_3))
        \Big) \\
        &\quad\quad+
        \Phi_{(k)}^{-1}\sigma\Big(
        \Phi_{(k)}\phi_0(a_1),
        \Phi_{(k)}\big(
        \lambda^{k+1}R_{k+1}(a_2, a_3)
        +
        \cdots
        \big)
        \Big) \\
        &\quad=
        \phi_0\big(\pi(a_1, \pi(a_2, a_3))\big)
        +
        \lambda^{k+1}R_{k+1}(a_1, \pi(a_2, a_3))
        +
        \cdots\\
        &\quad\quad+
        \Phi_{(k)}^{-1}\sigma\Big(
        \Phi_{(k)}\phi_0(a_1),
        \Phi_{(k)}\big(
        \lambda^{k+1}R_{k+1}(a_2, a_3)
        +
        \cdots
        \big)
        \Big),
    \end{align*}
    where we used the defining
    Equation~\eqref{Poissonmorphismconduptok} of $R_{k+1}$ for the
    first and last equalities. Taking the cyclic sum over $a_1, a_2, a_3$,
    using the Jacobi identity for $\pi$ and for $\sigma$ (recall that
    $\Phi_{(k)}$ is an automorphism, so it turns $\sigma$ into a
    formal Poisson bracket again), and the fact that $\Phi_{(k)}$ is
    the identity at zeroth order, we obtain
    \begin{align*}
        0
        &=
        \lambda^{k+1}\Big(
        R_{k+1}\big(a_1, \pi(a_2, a_3)\big)
        +
        R_{k+1}\big(a_2, \pi(a_3, a_1)\big)
        +
        R_{k+1}\big(a_3, \pi(a_1, a_2)\big)
        \Big) \\
        &\quad+
        \lambda^{k+1}\Big(
        \sigma\big(\phi_0(a_1), R_{k+1}(a_2, a_3)\big)
        +
        \sigma\big(\phi_0(a_2), R_{k+1}(a_3, a_1)\big)
        +
        \sigma\big(\phi_0(a_3), R_{k+1}(a_1, a_2)\big)
        \Big)
        +
        \cdots.
    \end{align*}
    To obtain the term of order $\lambda^{k+1}$ from this equation, we
    have to expand the deformed Poisson structures $\pi$ and $\sigma$
    and take their zeroth order terms, which are the original brackets on
    $\alg{A}$ and $\alg{B}$. Thus we get
    \begin{align*}
        0
        &=
        R_{k+1}(a_1, \{a_2, a_3\}_\alg{A})
        +
        R_{k+1}(a_2, \{a_3, a_1\}_\alg{A})
        +
        R_{k+1}(a_3, \{a_1, a_2\}_\alg{A}) \\
        &\quad+
        \{\phi_0(a_1), R_{k+1}(a_2,a_3)\}_\alg{B}
        +
        \{\phi_0(a_2), R_{k+1}(a_3,a_1)\}_\alg{B}
        +
        \{\phi_0(a_3), R_{k+1}(a_1,a_2)\}_\alg{B},
    \end{align*}
    which is precisely the condition $\delta R_{k+1} = 0$.
\end{proof}

\begin{proposition}[\textbf{Existence}]
    \label{Existence}%
    Suppose that we have a horizontal lift.  If
    $\HCEder^2(\alg{A}, \alg{B}) = \{0\}$, then there exists a
    derivation $X \in \lambda\Der(\alg{B})[[\lambda]]$ such that
    $\Phi = \exp(X)\phi_0$ solves Problem~\ref{Deformationproblem}.
\end{proposition}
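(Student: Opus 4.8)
The plan is the standard order-by-order obstruction argument: at each stage the failure of $\exp(X_{(k)})\phi_0$ to be a Poisson morphism is encoded by the cocycle $R_{k+1}$ of Lemma~\ref{PropertiesofRkplusOne}, which can be killed precisely because $\HCEder^2(\alg{A},\alg{B}) = \{0\}$, the primitive being promoted to a genuine derivation of $\alg{B}$ by the horizontal lift.

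\textbf{Setting up the induction.} Take $X_{(0)} = 0$, so $\Phi_{(0)} = \id$ and $\Phi_{(0)}\phi_0 = \phi_0$; since $\phi_0$ is a Poisson morphism for $\pi_0$ and $\sigma_0$, Equation~\eqref{Poissonmorphismconduptok} holds at order $k = 0$. Now assume we have $X_{(k)} \in \lambda\Der(\alg{B})[[\lambda]]$ with $\Phi_{(k)} = \exp(X_{(k)})$ solving Problem~\ref{Deformationproblem} up to order $k$, with obstruction $R_{k+1}\colon \alg{A}\times\alg{A}\to\alg{B}$ as in \eqref{Poissonmorphismconduptok}. By Lemma~\ref{PropertiesofRkplusOne}, $R_{k+1}$ is a $2$-cocycle in $\CCEder^\bullet(\alg{A},\alg{B})$; since $\HCEder^2(\alg{A},\alg{B}) = \{0\}$, there is $T_{k+1} \in \CCEder^1(\alg{A},\alg{B})$, i.e.\ a derivation of the commutative product along $\phi_0$, with $\delta T_{k+1} = -R_{k+1}$. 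Using the horizontal lift, set $Y_{k+1} := (T_{k+1})^h \in \Der(\alg{B})$, so that $Y_{k+1}\circ\phi_0 = \phi_0^*Y_{k+1} = T_{k+1}$ by \eqref{horproperty}, and define $X_{(k+1)}$ by $\exp(X_{(k+1)}) := \exp(\lambda^{k+1}Y_{k+1})\exp(X_{(k)})$ via Baker--Campbell--Hausdorff, so that $X_{(k+1)} \equiv X_{(k)} \pmod{\lambda^{k+1}}$ and $X_{(k+1)} \equiv X_{(k)} + \lambda^{k+1}Y_{k+1} \pmod{\lambda^{k+2}}$.

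\textbf{The inductive step.} One checks that $\Phi_{(k+1)} = \exp(X_{(k+1)})$ solves the problem up to order $k+1$. Since $\Phi_{(k+1)} \equiv (\id + \lambda^{k+1}Y_{k+1})\Phi_{(k)} \pmod{\lambda^{k+2}}$ and $\Phi_{(k)}\phi_0(a) \equiv \phi_0(a) \pmod{\lambda}$, one gets $\Phi_{(k+1)}\phi_0(a) \equiv \Phi_{(k)}\phi_0(a) + \lambda^{k+1}T_{k+1}(a) \pmod{\lambda^{k+2}}$. Expanding $\Phi_{(k+1)}^{-1}\sigma\big(\Phi_{(k+1)}\phi_0(a_1),\Phi_{(k+1)}\phi_0(a_2)\big)$ modulo $\lambda^{k+2}$, using \eqref{Poissonmorphismconduptok} for $X_{(k)}$, the bilinearity of $\sigma$, and the facts that $\Phi_{(k)}$ is an algebra automorphism reducing modulo $\lambda$ to the identity and that $\phi_0$ is a Poisson morphism at zeroth order (so that $\sigma(\Phi_{(k)}\phi_0(a_1),\Phi_{(k)}\phi_0(a_2)) \equiv \phi_0(\{a_1,a_2\}_\alg{A}) \pmod{\lambda}$), the coefficient of $\lambda^{k+1}$ in the difference with $\phi_0(\pi(a_1,a_2))$ works out to be
\[
R_{k+1}(a_1,a_2) - T_{k+1}(\{a_1,a_2\}_\alg{A}) + \{\phi_0(a_1),T_{k+1}(a_2)\}_\alg{B} + \{T_{k+1}(a_1),\phi_0(a_2)\}_\alg{B} = R_{k+1}(a_1,a_2) + (\delta T_{k+1})(a_1,a_2),
\]
where the last three terms are recognized as $(\delta T_{k+1})(a_1,a_2)$ via \eqref{eq:CEdiff} using antisymmetry of $\{\cdot,\cdot\}_\alg{B}$. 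By the choice of $T_{k+1}$ this vanishes, so \eqref{Poissonmorphismconduptok} holds at order $k+1$, completing the induction. Finally, since $X_{(k+1)} \equiv X_{(k)} \pmod{\lambda^{k+1}}$, the sequence $(X_{(k)})$ is $\lambda$-adically Cauchy and converges to some $X \in \lambda\Der(\alg{B})[[\lambda]]$; by construction $\Phi := \exp(X)\phi_0$ satisfies \eqref{Poissonmorphismconduptok} to all orders, i.e.\ it is a Poisson morphism $(\alg{A}[[\lambda]],\pi) \to (\alg{B}[[\lambda]],\sigma)$, solving Problem~\ref{Deformationproblem}.

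\textbf{Main obstacle.} The delicate point is the order-$\lambda^{k+1}$ computation in the inductive step: one must track carefully which terms are evaluated using the deformed brackets $\pi,\sigma$ versus their zeroth-order parts $\pi_0,\sigma_0$, exploit that $\Phi_{(k)}$ is an algebra automorphism equal to the identity modulo $\lambda$, and match the outcome against the Chevalley--Eilenberg differential \eqref{eq:CEdiff}, where the signs require some care. The role of the horizontal lift is exactly to pass from the primitive $T_{k+1}\in\CCEder^1(\alg{A},\alg{B})$, which a priori only lives ``along $\phi_0$'', to an honest derivation $Y_{k+1}$ of $\alg{B}$ that can be exponentiated to a Poisson-algebra automorphism; without it one could not correct $\Phi_{(k)}$ while keeping it of the required form $\exp(X)\phi_0$.
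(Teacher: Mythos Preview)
Your proof is correct and follows essentially the same order-by-order strategy as the paper: identify the obstruction $R_{k+1}$ as a $\delta$-cocycle via Lemma~\ref{PropertiesofRkplusOne}, kill it using $\HCEder^2=\{0\}$, and promote the primitive to a derivation of $\alg{B}$ with the horizontal lift. The only cosmetic difference is that you insert the correction on the left, $\exp(X_{(k+1)}) = \exp(\lambda^{k+1}Y_{k+1})\exp(X_{(k)})$, whereas the paper inserts it on the right, $\Phi_{(k+1)} = \Phi_{(k)}\exp(\lambda^{k+1}X_{k+1})$; since $\Phi_{(k)}\equiv\id\pmod\lambda$, both agree modulo $\lambda^{k+2}$ and lead to the same equation $-R_{k+1}=\delta(\phi_0^*Y_{k+1})$. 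Your explicit remark on $\lambda$-adic convergence of the $X_{(k)}$ is a nice touch that the paper leaves implicit.
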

\begin{proof}
    We construct a solution inductively. First notice that
    $X_{(0)} = 0$ does the job for $k = 0$. In this case
    $\Phi_{(0)} = \id_\mathcal{B}$ and we have
    \begin{equation*}
        R_{1}(a_1, a_2)
        =
        \sigma_1\big(\phi_0(a_1), \phi_0(a_2)\big)
        -
        \phi_0(\pi_1(a_1, a_2)).
    \end{equation*}
    Now suppose we have found $X_{(k)}$ such that Equation
    \eqref{Poissonmorphismconduptok} holds. We look for
    $X_{k+1}\in \Der(\mathcal{B})$ such that, for
    $\Phi_{(k+1)}:= \Phi_{(k)}\exp(\lambda^{k+1}X_{k+1})$, we have an
    analogous equation to \eqref{Poissonmorphismconduptok} up to one
    order higher. Then
    \begin{align*}
&        \Phi_{(k+1)}^{-1}\sigma
        \big(\Phi_{(k+1)}\phi_0(a_1), \Phi_{(k+1)}\phi_0(a_2)\big)
         =
        \Phi_{k+1}^{-1}\Phi_{(k)}^{-1}\sigma\big(
        \Phi_{(k)}\Phi_{k+1}\phi_0(a_1),
        \Phi_{(k)}\Phi_{k+1}\phi_0(a_2)
        \big) \\
        & =
        (\id - \lambda^{k+1}X_{k+1} + \cdots)
        \Phi_{(k)}^{-1}\sigma\big(
        \Phi_{(k)} \big(
        (\id + \lambda^{k+1}X_{k+1} + \cdots)
        \phi_0(a_1)
        \big),
        a_1 \leftarrow a_2 \big),
    \end{align*}
    where $a_1 \leftarrow a_2$ in the second argument of $\sigma$ means
    that we repeat the first argument with $a_1$ replaced by
    $a_2$. Expanding the right-hand side and using the defining equation
    of $R_{k+1}$, the fact that $\Phi_{(k)}$ and $\Phi_{(k)}^{-1}$ are
    the identity in zeroth order, and that $\phi_0$ is a Poisson
    morphism between $\{\cdot, \cdot\}_\alg{A}$ and
    $\{\cdot, \cdot\}_\alg{B}$, we obtain
    \begin{align*}
        \Phi_{(k+1)}^{-1}\sigma\big(
        \Phi_{(k+1)}\phi_0(a_1),
        \Phi_{(k+1)}\phi_0(a_2)
        \big)
        =&
        \Phi_{(k)}^{-1}\sigma\big(
        \Phi_{(k)}\phi_0(a_1),
        \Phi_{(k)}\phi_0(a_2)
        \big)
        -
        \lambda^{k+1}X_{k+1}\sigma
        \big(\phi_0(a_1), \phi_0(a_2)\big) \\
        &+
        \lambda^{k+1}\sigma
        \big(\phi_0(a_1), X_{k+1}\phi_0(a_2)\big)
        +
        \lambda^{k+1}\sigma
        \big(X_{k+1}\phi_0(a_1), \phi_0(a_2)\big)
        +
        \cdots \\
        =&
        \phi_0(\pi(a_1, a_2))
        +
        \lambda^{k+1}R_{k+1}(a_1, a_2)
        -
        \lambda^{k+1}X_{k+1}\phi_0\{a_1, a_2\}_\alg{A} \\
        &+
        \lambda^{k+1}
        \{\phi_0(a_1), X_{k+1}\phi_0(a_2)\}_\alg{B}
        +
        \lambda^{k+1}
        \{X_{k+1}\phi_0(a_1), \phi_0(a_2)\}_\alg{B}
        +
        \cdots.
    \end{align*}
    Hence to solve Problem~\ref{Deformationproblem} up to order $k+1$
    we need to find $X_{k+1}$ satisfying
    \begin{equation*}
        R_{k+1}(a_1, a_2)
        =
        X_{k+1}\phi_0\{a_1, a_2\}_\alg{A}
        -
        \{\phi_0(a_1), X_{k+1}\phi_0(a_2)\}_\alg{B}
        -
        \{X_{k+1}\phi_0(a_1), \phi_0(a_2)\}_\alg{B},
    \end{equation*}
    for all $a_1, a_2 \in \alg{A}$, which is just the equation
    \begin{equation}
        \label{CondforextensionuptokplusOne}
        - R_{k+1} = \delta \phi_0^*X_{k+1}.
    \end{equation}
    Since $R_{k+1}$ is closed in
    $\CCEder^2(\alg{A}, \alg{B})$, the condition
    $\HCEder^2(\alg{A}, \alg{B}) = \{ 0 \}$ implies that there exists
    $\xi_{k+1} \in \CCEder^1(\alg{A}, \alg{B})$ with
    $-R_{k+1} = \delta \xi_{k+1}$. Then any horizontal lift
    $X_{k+1} := \xi_{k+1}^h$ of $\xi_{k+1}$ satisfies
    \eqref{CondforextensionuptokplusOne}.
\end{proof}

%
%

\subsection{Uniqueness of deformations}
\label{subsec:uniqueness}
In this section we provide conditions for the uniqueness of solutions of Problem~\ref{Deformationproblem}, up to the natural notion of equivalence \eqref{eq:EquivalenceXccX}.

For a Poisson structure $\sigma$ on the ring $\alg{B}[[\lambda]]$, any element $H = H_0 + \lambda H_1 + \cdots \in \alg{B}[[\lambda]]$ defines an {\em inner} Poisson derivation $X_H$ of $(\alg{B}[[\lambda]],\sigma)$ by
\begin{equation}
    \label{eq:HamiltonianDerivation}
    X_H b = \sigma(H, b).
\end{equation}
We refer to such inner derivations as {\bf Hamiltonian derivations}, noticing that they form a Lie ideal inside the Poisson derivations. In particular,  the Baker-Campbell-Hausdorff series of two Hamiltonian derivations is again Hamiltonian, a fact we will frequently use.

We can now state our main uniqueness result.
\begin{proposition}[\textbf{Uniqueness}]
    \label{Uniqueness}%
    Assume that $\HCEder^1(\alg{A}, \alg{B}) = \{0\}$. If
    $X, \overline{X} \in \lambda \Der(\alg{B})[[\lambda]]$ are two solutions to
    Problem~\ref{Deformationproblem}, then
    \begin{equation}
        \label{eq:TheFunnyTrafo}
        \exp(\overline{X}) = \exp(X_H)\exp(X)\exp(V)
    \end{equation}
    for $V \in \lambda\Der(\alg{B})[[\lambda]]$ a vertical derivation and
    $X_H = \sigma(H, \cdot)$ with $H \in \lambda\alg{B}[[\lambda]]$.
    In particular, any two solutions to Problem~\ref{Deformationproblem} are equivalent.
\end{proposition}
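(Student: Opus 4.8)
The plan is to match the two solutions order by order, using the vanishing of $\HCEder^1(\alg{A},\alg{B})$ to absorb, at each stage, the leading discrepancy into an equivalence of the permitted type, and then to pass to a $\lambda$-adic limit. It is convenient to transport the target bracket along the fixed solution $X$: let $\sigma^X$ be the Poisson structure on $\alg{B}[[\lambda]]$ given by $\sigma^X(b_1,b_2):=\exp(-X)\,\sigma\big(\exp(X)b_1,\exp(X)b_2\big)$, with zeroth order $\sigma_0=\{\cdot,\cdot\}_\alg{B}$. Since $\exp(X)$ is a Poisson isomorphism $(\alg{B}[[\lambda]],\sigma^X)\to(\alg{B}[[\lambda]],\sigma)$, for any solution $\overline X$ both $\phi_0=\exp(-X)\Phi$ and $\exp(E)\phi_0=\exp(-X)\exp(\overline X)\phi_0$, where $E:=\BCH(-X,\overline X)\in\lambda\Der(\alg{B})[[\lambda]]$, are Poisson morphisms $(\alg{A}[[\lambda]],\pi)\to(\alg{B}[[\lambda]],\sigma^X)$.

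I would then argue by induction, producing solutions $\overline X^{(0)}=\overline X,\overline X^{(1)},\overline X^{(2)},\dots$ with $\overline X^{(k)}-X\in\lambda^{k+1}\Der(\alg{B})[[\lambda]]$ and $\exp(\overline X^{(k+1)})=\exp(X_{H_{k+1}})\exp(\overline X^{(k)})\exp(V_{k+1})$, where $X_{H_{k+1}}$ is the $\sigma$-Hamiltonian derivation of some $H_{k+1}\in\lambda^{k+1}\alg{B}[[\lambda]]$ and $V_{k+1}\in\lambda^{k+1}\Der(\alg{B})[[\lambda]]$ is vertical; the case $k=0$ is vacuous since $X,\overline X\in\lambda\Der(\alg{B})[[\lambda]]$. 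For the inductive step, abbreviate $\overline X:=\overline X^{(k)}$, write $\overline X-X=\lambda^{k+1}D+O(\lambda^{k+2})$ with $D\in\Der(\alg{B})$, and take $E:=\BCH(-X,\overline X)=\lambda^{k+1}D+O(\lambda^{k+2})$, so $\exp(E)\phi_0(a)=\phi_0(a)+\lambda^{k+1}D\phi_0(a)+O(\lambda^{k+2})$. Expanding the two Poisson-morphism identities $\sigma^X(\phi_0 a_1,\phi_0 a_2)=\phi_0(\pi(a_1,a_2))$ and $\sigma^X(\exp(E)\phi_0 a_1,\exp(E)\phi_0 a_2)=\exp(E)\phi_0(\pi(a_1,a_2))$, subtracting, and reading off the coefficient of $\lambda^{k+1}$ (using $\sigma^X_0=\{\cdot,\cdot\}_\alg{B}$ and $\pi_0=\{\cdot,\cdot\}_\alg{A}$) gives
\[
\{D\phi_0(a_1),\phi_0(a_2)\}_\alg{B}+\{\phi_0(a_1),D\phi_0(a_2)\}_\alg{B}-D\phi_0(\{a_1,a_2\}_\alg{A})=0,
\]
i.e. $\delta(\phi_0^*D)=0$ in $\CCEder^1(\alg{A},\alg{B})$, $\delta$ being the differential \eqref{eq:CEdiff}; note $\phi_0^*D=D\circ\phi_0\in\CCEder^1(\alg{A},\alg{B})$ because $D$ is a derivation of $\alg{B}$.

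Since $\HCEder^1(\alg{A},\alg{B})=\{0\}$, there is $\beta\in\CCEder^0(\alg{A},\alg{B})=\alg{B}$ with $\phi_0^*D=\delta\beta$, that is $D\phi_0(a)=\{\phi_0(a),\beta\}_\alg{B}=-X_\beta(\phi_0(a))$ for all $a\in\alg{A}$, where $X_\beta:=\{\beta,\cdot\}_\alg{B}$; hence $V_0:=D+X_\beta$ is vertical and $D=V_0-X_\beta$. Set $H:=\lambda^{k+1}\beta\in\lambda^{k+1}\alg{B}[[\lambda]]$, whose $\sigma$-Hamiltonian derivation is $X_H=\sigma(H,\cdot)=\lambda^{k+1}X_\beta+O(\lambda^{k+2})$, and define $\exp(\overline X^{(k+1)}):=\exp(X_H)\exp(\overline X)\exp(-\lambda^{k+1}V_0)$. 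A short Baker--Campbell--Hausdorff computation gives $\overline X^{(k+1)}-X=\lambda^{k+1}(D+X_\beta-V_0)+O(\lambda^{k+2})=O(\lambda^{k+2})$; and since $\exp(-\lambda^{k+1}V_0)\phi_0=\phi_0$ and $\exp(X_H)$ is a Poisson automorphism of $(\alg{B}[[\lambda]],\sigma)$, the map $\exp(\overline X^{(k+1)})\phi_0=\exp(X_H)\exp(\overline X)\phi_0$ is again a Poisson morphism into $(\alg{B}[[\lambda]],\sigma)$, i.e. $\overline X^{(k+1)}$ is a solution. Thus $\overline X^{(k)}\to X$ $\lambda$-adically and $\exp(\overline X^{(k)})=\exp(X_{H_k})\cdots\exp(X_{H_1})\exp(\overline X)\exp(V_1)\cdots\exp(V_k)$. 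As the Baker--Campbell--Hausdorff series of two $\sigma$-Hamiltonian derivations is again $\sigma$-Hamiltonian (recalled before the statement) and vertical derivations form a Lie subalgebra, and both classes are $\lambda$-adically closed, the left product converges to $\exp(X_H)$ with $H\in\lambda\alg{B}[[\lambda]]$ and the right one to $\exp(V)$ with $V$ vertical. Passing to the limit yields $\exp(X)=\exp(X_H)\exp(\overline X)\exp(V)$, and inverting (with $-H\in\lambda\alg{B}[[\lambda]]$ and $-V$ vertical) gives \eqref{eq:TheFunnyTrafo}; since a Hamiltonian derivation is in particular a Poisson derivation of $\sigma$, this is also an instance of the equivalence \eqref{eq:EquivalenceXccX}.

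I expect the delicate point to be the cocycle identification — verifying that the order-$\lambda^{k+1}$ obstruction is a $\delta$-closed element of the \emph{derivation} subcomplex $\CCEder^\bullet$ for the \emph{undeformed} brackets — together with the observation that a $\CCEder^0$-coboundary $\delta\beta$ splits the leading discrepancy $D$ as (minus) a Hamiltonian derivation plus a vertical one, which is exactly what the degrees of freedom in \eqref{eq:TheFunnyTrafo} can absorb; the remaining $\lambda$-adic bookkeeping is routine.
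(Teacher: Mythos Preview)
Your proof is correct and follows essentially the same strategy as the paper's: an order-by-order induction in which the leading discrepancy is shown to be a $\delta$-closed element of $\CCEder^1(\alg{A},\alg{B})$, then written as $\delta\beta$ using $\HCEder^1=\{0\}$, and thereby split as (minus) a Hamiltonian derivation plus a vertical one. The only organizational difference is that you first conjugate the target bracket by $\exp(X)$ (reducing to comparing $\phi_0$ with $\exp(E)\phi_0$ for $E=\BCH(-X,\overline X)$), which streamlines the cocycle computation, whereas the paper keeps $X$ fixed and directly builds up the Hamiltonian and vertical factors $Y_{(k)},V_{(k)}$ on either side; the resulting inductive step and the use of the cohomological hypothesis are the same.
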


\begin{proof}
    Suppose that we have found
    $Y_{(k)}, V_{(k)} \in \lambda\Der(\alg{B})[[\lambda]]$ such that
    $V_{(k)}$ is vertical, $Y_{(k)}$ is a derivation of $\sigma$ and
    $\exp(Y_{(k)}) \exp(X) \exp(V_{(k)})$ agrees with
    $\exp(\overline{X})$ up to order $k$. Notice that
    $Y_{(0)} = V_{(0)} = 0$ solves the case $k = 0$, which is the
    starting point for our recursive construction.

    The fact that $\exp(Y_{(k)})\exp(X)\exp(V_{(k)})$ agrees with
    $\exp(\overline{X})$ up to order $k$ means that there is a
    $Z_{(k+1)} \in \lambda^{k+1} \Der(\alg{B})[[\lambda]]$ such that
    \begin{equation}
        \label{uniquenesuptok}
        \exp(Y_{(k)})\exp(X)\exp(V_{(k)})\exp(-\overline{X})
        =
        \exp(Z_{(k+1)})
        =
        \id + \lambda^{k+1}Z_{k+1} + \cdots.
    \end{equation}
    Now we look for a vertical derivation
    $V_{k+1} \in \Der(\alg{B})[[\lambda]]$ and a Poisson derivation
    $Y_{k+1} \in \Der(\alg{B})[[\lambda]]$ with respect to the
    deformed Poisson structure $\sigma$ such that the maps
    $\exp(\overline{X})$ and
    $$
    \exp(\lambda^{k+1}Y_{k+1}) \exp(Y_{(k)}) \exp(X) \exp(V_{(k)})
    \exp(\lambda^{k+1}V_{k+1})
    $$
    agree up to order $k+1$.  Computing the analogue of
    Equation~\eqref{uniquenesuptok} up to one order higher gives
    \begin{align*}
        &\exp(\lambda^{k+1}Y_{k+1})
        \exp(Y_{(k)})
        \exp(X)
        \exp(V_{(k)})
        \exp(\lambda^{k+1}V_{k+1})
        \exp(-\overline{X}) \\
        &\quad=
        (\id + \lambda^{k+1}Y_{k+1}^0 + \cdots)
        \exp(Y_{(k)}) \exp(X) \exp(V_{(k)})
        (\id + \lambda^{k+1}V_{k+1}^0 + \cdots)
        \exp(-\overline{X}) \\
        &\quad=
        \exp(Y_{(k)}) \exp(X) \exp(V_{(k)}) \exp(-\overline{X})
        +
        \lambda^{k+1}(Y_{k+1}^0 + V_{k+1}^0)
        +
        \cdots \\
        &\quad=
        \id
        + \lambda^{k+1}Z_{k+1}
        + \lambda^{k+1}(Y_{k+1}^0
        + V_{k+1}^0)
        + \cdots.
    \end{align*}
    So the zeroth order terms $Y_{k+1}^0$ and $V_{k+1}^0$ need to
    satisfy
    \begin{equation}\label{CondonYandVinkPlusOne}
        Y_{k+1}^0 + V_{k+1}^0 = -Z_{k+1}.
    \end{equation}
    Since $\exp(Y_{(k)})$ is an automorphism of $\sigma$ we know
    that $\exp(Y_{(k)}) \exp(X) \exp(V_{(k)}) \phi_0$ solves
    Problem~\ref{Deformationproblem}. Thus we compute
    \begin{align}
        &\exp(Z_{(k+1)})
        \sigma\Big(
        \exp(-Z_{(k+1)}) \exp(Y_{(k)}) \exp(X) \exp(V_{(k)}) \phi_0(a_1),
        a_1\leftarrow a_2
        \Big)
        \notag \\
        &\quad=
        \exp(Y_{(k)}) \exp(X) \exp(V_{(k)}) \exp(-\overline{X})
        \sigma\Big(
        \exp(\overline{X})\phi_0(a_1),
        \exp(\overline{X})\phi_0(a_2)
        \Big)
        \notag \\
        &\quad=
        \exp(Y_{(k)}) \exp(X) \exp(V_{(k)}) \phi_0(\pi(a_1, a_2))
        \notag \\
        \label{computationOne}
        &\quad=
        \sigma\Big(
        \exp(Y_{(k)})\exp(X)\exp(V_{(k)})\phi_0(a_1),
        a_1\leftarrow a_2
        \Big).
    \end{align}
    Expanding the left hand side of this equation we get
    \begin{align*}
        &\exp(Z_{(k+1)})
        \sigma\Big(
        \exp(-Z_{(k+1)}) \exp(Y_{(k)}) \exp(X) \exp(V_{(k)})
        \phi_0(a_1),
        a_1\leftarrow a_2
        \Big) \\
        &\quad=
        (\id + \lambda^{k+1}Z_{k+1} + \cdots)
        \sigma\Big(
        (\id - \lambda^{k+1}Z_{k+1} + \cdots)
        \exp(Y_{(k)})\exp(X) \exp(V_{(k)})\phi_0(a_1),
        a_1\leftarrow a_2
        \Big) \\
        &\quad=
        \sigma\Big(
        \exp(Y_{(k)}) \exp(X) \exp(V_{(k)}) \phi_0(a_1),
        \exp(Y_{(k)}) \exp(X) \exp(V_{(k)}) \phi_0(a_2)
        \Big) \\
        &\quad\quad+
        \lambda^{k+1}\Big(
        Z_{k+1}\phi_0\{a_1, a_2\}_\alg{A}
        -
        \{Z_{k+1}\phi_0(a_1), \phi_0(a_2)\}_\alg{B}
        -
        \{\phi_0(a_1), Z_{k+1}\phi_0(a_2)\}_\alg{B}
        \Big)
        + \cdots \\
        &\quad=
        \sigma\Big(
        \exp(Y_{(k)}) \exp(X) \exp(V_{(k)}) \phi_0(a_1),
        a_1\leftarrow a_2
        \Big)
        -
        \lambda^{k+1}(\delta\phi_0^*Z_{k+1})(a_1, a_2)
        +
        \cdots.
    \end{align*}
    Hence, comparing with the right-hand side of
    \eqref{computationOne}, we see that
    \begin{equation*}
        \delta\phi_0^*Z_{k+1} = 0.
    \end{equation*}
    So the condition $\HCEder^1(\alg{A}, \alg{B}) = \{0\}$ implies
    that we can find $H_{k+1} \in \alg{B}$ such that
    $\delta H_{k+1} = \phi_0^*Z_{k+1}$, i.e.,
    \begin{equation*}
        \{H_{k+1}, \phi_0(a)\}_\alg{B}
        =
        -\phi_0^*Z_{k+1}(a) = -Z_{k+1}(\phi_0(a)).
    \end{equation*}
    Observe that
    $H_{k+1}\in \CCEder^0(\alg{A}, \alg{B}) = \alg{B} =
    \CCEder^0(\alg{B}, \alg{B})$.
    It follows from the previous equation that
    $V_{k+1}^0:= \delta H_{k+1} - Z_{k+1}\in \CCEder^1(\alg{B},
    \alg{B})$
    vanishes along $\phi_0$, so it is a vertical derivation. Then we
    have
    \begin{equation*}
        \{H_{k+1}, \cdot\}_\alg{B} + V_{k+1}^0 = -Z_{k+1},
    \end{equation*}
    so we fulfill \eqref{CondonYandVinkPlusOne} by taking
    $Y_{k+1}^0 := \{H_{k+1}, \cdot\}_\alg{B}$. Now, we put
    $V_{k+1} := V_{k+1}^0$ and $Y_{k+1} := \sigma(H_{k+1}, \cdot)$,
    which is a Hamiltonian (hence Poisson) derivation of $\sigma$ agreeing with $Y_{k+1}^0$ in zeroth order of $\lambda$.
\end{proof}

%
%

\subsection{Commutants}\label{subsec:commutants}
As a second step to analyze Problem~\ref{prob}, we return to
the Poisson morphism $\phi_0\colon (\alg{A},\{\cdot,\cdot\}_{\alg{A}})\to (\alg{B},\{\cdot,\cdot\}_{\alg{B}})$ as starting point, and consider the commutant of $\phi_0(\alg{A})$ inside $\alg{B}$,
\begin{equation}
    \label{eq:DefinitionCommutant}
    \alg{A}'
    :=
    (\phi_0(\alg{A}))^c
    :=
    \left\{
        b \in \alg{B}
        \; \big| \;
        \{b, \phi_0(a)\}_\alg{B} = 0
        \textrm{ for all }
        a \in \alg{A}
    \right\}.
\end{equation}
We will keep using the notation $\pi_0=\{\cdot,\cdot\}_{\alg{A}}$, $\sigma_0=\{\cdot,\cdot\}_{\alg{B}}$.
Since $\alg{A}'$ is a Poisson subalgebra of $\alg{B}$, it acquires a Poisson structure $\pi_0'$. Denoting by $\phi_0'\colon \alg{A}'\to \alg{B}$ the inclusion map, we obtain a diagram of Poisson morphisms
\begin{equation}
    \label{eq:poissonmaps}
    (\alg{A},\pi_0)\stackrel{\phi_0}{\longrightarrow} (\alg{B},\sigma_0) \stackrel{\phi_0'}{\longleftarrow}(\alg{A}',\pi'_0)
\end{equation}
with Poisson commuting images. We are interested in studying deformations of this diagram. Having studied the deformation problem for $\phi_0$ in Problem~\ref{Deformationproblem}, we now consider the right leg of the diagram.

The set-up in this section is  that we have formal Poisson deformations $\pi$ of $\pi_0$ and $\sigma$ of $\sigma_0$, and a Poisson morphism $\Phi\colon (\alg{A}[[\lambda]],\pi)\to (\alg{B}[[\lambda]],\sigma)$ deforming $\phi_0$.   We will be concerned with the following two issues.
\begin{itemize}
    \item[(A)] Find a morphism of rings $\Phi'\colon \alg{A}'[[\lambda]]\to \alg{B}[[\lambda]]$ deforming $\phi_0'$ such that
    $$
    \sigma(\Phi'(a'),\Phi(a))=0
    $$
    for all $a\in \alg{A}[[\lambda]]$ and $a'\in \alg{A}'[[\lambda]]$.
    \item[(B)] Find a formal Poisson deformation $\pi'$ of $\pi'_0$ for which $\Phi'$ is a Poisson morphism into $(\alg{B}[[\lambda]],\sigma)$
\end{itemize}
With such $\Phi'$ and $\pi'$ we obtain a diagram of Poisson morphisms
\begin{equation}
    \label{eq:deformedpoissonmaps}
    (\alg{A}[[\lambda]],\pi)\stackrel{\Phi}{\longrightarrow} (\alg{B}[[\lambda]],\sigma) \stackrel{\Phi'}{\longleftarrow}(\alg{A}'[[\lambda]],\pi')
\end{equation}
with Poisson commuting images deforming \eqref{eq:poissonmaps}.
We will see below that solving (A) automatically solves (B), so we start focusing on (A).

Let
\begin{equation}
    \label{eq:NewDeformedCommutant}
    C :=
    \left\{
        b \in \alg{B}[[\lambda]]
        \; \big| \;
        \sigma(b, \Phi(a)) = 0
        \textrm{ for all } a \in \alg{A}[[\lambda]]
    \right\}
\end{equation}
be the commutant of the image $\Phi(\alg{A}[[\lambda]])$ with
respect to $\sigma$. We conveniently reformulate (A) as follows.

\begin{problem}\label{prob:comm}
Find a derivation $X' \in \lambda \Der(\alg{B})[[\lambda]]$ such that
$\Phi'= \exp(X')\phi_0'\colon \alg{A}'[[\lambda]] \to \alg{B}[[\lambda]]$ satisfies $\Phi'(\alg{A}'[[\lambda]])\subseteq C$, i.e.,
\begin{equation}\label{eq:poiscomm}
    \sigma\big(\exp(X')\phi_0'(a'), \Phi(a)\big)
    = 0
\end{equation}
for all $a\in \alg{A}$, $a' \in \alg{A}'$.
\end{problem}

Solving Problem~\ref{prob:comm} has the following consequences (which in particular solves (B)).

\begin{proposition}
    \label{Deformationofthecommutant}%
Let  $X' \in \lambda \Der(\alg{B})[[\lambda]]$ be such that $\exp(X')\phi_0'(\alg{A}'[[\lambda]])\subseteq C$. Then
\begin{itemize}
\item[(i)] the map
    \begin{equation}
        \label{eq:IsoClassicalDeformedCommutant}
        \exp(X')\phi_0'\colon\alg{A}'[[\lambda]] \rightarrow C
    \end{equation}
    is a ring isomorphism;
    \item[(ii)] there is a unique Poisson structure $\pi'$
    on $\alg{A}'[[\lambda]]$ such that $\Phi'= \exp(X')\phi_0'\colon (\alg{A}'[[\lambda]],\pi')\to (\alg{B}[[\lambda]],\sigma)$ is a Poisson morphism;
            \end{itemize}
\end{proposition}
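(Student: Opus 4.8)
The plan is to prove (i) first and then deduce (ii) from it. For (i), note that $\exp(X')\colon\alg{B}[[\lambda]]\to\alg{B}[[\lambda]]$ is a ring automorphism and $\phi_0'\colon\alg{A}'\hookrightarrow\alg{B}$ is an inclusion, so $\Phi'=\exp(X')\phi_0'$ is an injective morphism of rings whose image lies in $C$ by hypothesis; hence the only nontrivial point is surjectivity onto $C$. This I would establish by the usual order-by-order argument, just as in the proof of Proposition~\ref{prop:commimage}. Given $b=b_0+\lambda b_1+\cdots\in C$, expanding the identity $\sigma(b,\Phi(a))=0$ for $a\in\alg{A}$ in powers of $\lambda$ gives, in zeroth order, $\sigma_0(b_0,\phi_0(a))=0$ for all $a\in\alg{A}$, i.e. $b_0\in\alg{A}'$. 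Then $b-\exp(X')\phi_0'(b_0)$ belongs to $C$ (both summands do) and to $\lambda\alg{B}[[\lambda]]$ (since $X'\in\lambda\Der(\alg{B})[[\lambda]]$, so $\exp(X')$ is the identity in zeroth order), and dividing it by $\lambda$ produces a new element of $C$ — legitimate because $\sigma$ is $\lambda$-bilinear and $\alg{B}[[\lambda]]$ has no $\lambda$-torsion, so $\lambda c\in C$ forces $c\in C$ — to which the same reasoning applies. Iterating and passing to the $\lambda$-adic limit yields $a'=b_0+\lambda b_1'+\cdots\in\alg{A}'[[\lambda]]$ with $\exp(X')\phi_0'(a')=b$, proving surjectivity.

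For (ii), I would first record that $C$ is a Poisson subalgebra of $(\alg{B}[[\lambda]],\sigma)$: it is closed under the commutative product by the Leibniz rule, and under $\sigma$ by the Jacobi identity, since $\sigma(b_1,\Phi(a))=\sigma(b_2,\Phi(a))=0$ for all $a$ implies $\sigma(\sigma(b_1,b_2),\Phi(a))=0$. Thus $\sigma$ restricts to a Poisson bracket $\sigma|_C$ on $C$. Using the ring isomorphism $\Phi'\colon\alg{A}'[[\lambda]]\to C$ from (i), transport this bracket to $\alg{A}'[[\lambda]]$ by setting
\[
\pi'(a_1',a_2'):=(\Phi')^{-1}\big(\sigma(\Phi'(a_1'),\Phi'(a_2'))\big).
\]
As the pullback of a Poisson bracket along a ring isomorphism, $\pi'$ is a $\mathbb{K}[[\lambda]]$-bilinear Poisson structure, and by construction $\Phi'\colon(\alg{A}'[[\lambda]],\pi')\to(\alg{B}[[\lambda]],\sigma)$ is a Poisson morphism. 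Uniqueness is immediate: any Poisson structure making $\Phi'$ a morphism must satisfy $\Phi'(\pi'(a_1',a_2'))=\sigma(\Phi'(a_1'),\Phi'(a_2'))$, and $\Phi'$ is injective. Comparing zeroth-order terms, using $\Phi'=\phi_0'+\lambda(\cdots)$, $\sigma=\sigma_0+\lambda(\cdots)$, and that $\alg{A}'$ is a Poisson subalgebra of $\alg{B}$, one finds $\pi'_0=\{\cdot,\cdot\}_{\alg{A}'}$, so $\pi'$ deforms $\pi_0'$, which in particular settles issue (B).

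The only genuine work is the surjectivity claim in (i); everything in (ii) is formal. The point to be careful with in the recursion is verifying at each stage that subtracting $\exp(X')\phi_0'$ of the current partial sum leaves a remainder that is still in $C$ and has gained one order in $\lambda$ — this is exactly what the $\lambda$-torsion-freeness of $\alg{B}[[\lambda]]$ and the absence of a zeroth-order term in $X'$ provide.
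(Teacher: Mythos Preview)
Your proposal is correct and follows essentially the same approach as the paper: part~(i) is exactly the order-by-order peeling argument of Proposition~\ref{prop:commimage} (the paper simply cites that proof), and part~(ii) is the transport of $\sigma|_C$ along the ring isomorphism from~(i), which the paper dispatches in one sentence. You supply a few details the paper leaves implicit---the $\lambda$-torsion-freeness justification for dividing the remainder by $\lambda$, the explicit check that $C$ is a Poisson subalgebra, and the verification that $\pi'_0$ recovers the original bracket on $\alg{A}'$---but the skeleton of the argument is identical.
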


\begin{proof}
    Part (i) follows from the exact same argument as in the proof of Prop.~\ref{prop:commimage}.

     Since $C$ is a Poisson subalgebra of  $(\alg{B}[[\lambda]],\sigma)$, the isomorphism \eqref{eq:IsoClassicalDeformedCommutant} in (i) defines a Poisson structure on $\alg{A}'[[\lambda]]$ as desired. 
        \end{proof}

We will say that two solutions $X'$ and $X''$ to Problem~\ref{prob:comm} are {\bf equivalent} if there is a derivation
    $\xi \in \lambda \Der(\alg{A'})[[\lambda]]$ such that
  \begin{equation}\label{eq:equivprobcomm}
      \exp(X')\phi_0' = (\exp(X'')\phi_0')\circ \exp(\xi).
  \end{equation}

With this notion, we have the following uniqueness results.
\begin{proposition}\label{prop:unique}
The following holds.
\begin{itemize}
\item[(i)] Any two solutions  to Problem~\ref{prob:comm} are equivalent.
\item[(ii)] The equivalence class of $\pi'$ in Prop.~\ref{Deformationofthecommutant} (ii) is independent of the choice of derivation $X'$.
\end{itemize}
\end{proposition}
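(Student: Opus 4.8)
The plan is to run two parallel inductive arguments, exactly mimicking the structure of the proofs of Propositions~\ref{Existence}--\ref{Uniqueness}, but now for the ``right leg'' morphism $\Phi'$ and the induced Poisson structure $\pi'$ on $\alg{A}'[[\lambda]]$.

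For part (i), suppose $X'$ and $X''$ are two solutions to Problem~\ref{prob:comm}. I would look inductively for a derivation $\xi \in \lambda\Der(\alg{A}')[[\lambda]]$ realizing \eqref{eq:equivprobcomm}. Write $\xi_{(k)}$ for a partial solution so that $(\exp(X'')\phi_0')\circ \exp(\xi_{(k)})$ agrees with $\exp(X')\phi_0'$ up to order $k$ as a map $\alg{A}'[[\lambda]]\to\alg{B}[[\lambda]]$; the case $k=0$ holds with $\xi_{(0)}=0$. At order $k+1$ the discrepancy is measured by a map $\alg{A}'\to\alg{B}$; I would argue, exactly as in the proof of Prop.~\ref{Deformationofthecommutant}(i) (and Prop.~\ref{prop:commimage}), that because both $\exp(X')\phi_0'$ and $\exp(X'')\phi_0'$ land in the commutant $C$ and are ring isomorphisms onto $C$, this discrepancy is a derivation of $\alg{A}'$ \emph{along the identity of} $\alg{A}'$, i.e., an element of $\Der(\alg{A}')$. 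Concretely: the order-$(k+1)$ obstruction $\eta_{k+1}$ satisfies $\exp(X'')\phi_0' = \exp(X')\phi_0'\circ(\id - \lambda^{k+1}\eta_{k+1}+\cdots)$ modulo $\lambda^{k+2}$, and since the outer maps are ring morphisms into $C$ and agree in zeroth order, $\eta_{k+1}$ is forced to be a derivation of $\alg{A}'$. One then absorbs it by updating $\xi_{(k+1)} := \xi_{(k)} \bullet_{\mathrm{BCH}} (\lambda^{k+1}\eta_{k+1})$ and iterates. Note that no cohomological hypothesis is needed here: this is a ``soft'' statement, because any two ring isomorphisms $\alg{A}'[[\lambda]]\xrightarrow{\sim} C$ deforming $\phi_0'$ differ by composition with an automorphism of $\alg{A}'[[\lambda]]$ that is the identity in zeroth order, hence of the form $\exp(\xi)$ by the remark preceding Problem~\ref{Deformationproblem}.

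For part (ii), let $\pi'$ and $\pi''$ be the Poisson structures on $\alg{A}'[[\lambda]]$ produced by Prop.~\ref{Deformationofthecommutant}(ii) from two solutions $X'$ and $X''$. By part (i) there is $\xi\in\lambda\Der(\alg{A}')[[\lambda]]$ with $\exp(X'')\phi_0' = (\exp(X')\phi_0')\circ\exp(\xi)$. Since $\exp(X')\phi_0'\colon(\alg{A}'[[\lambda]],\pi')\to(\alg{B}[[\lambda]],\sigma)$ and $\exp(X'')\phi_0'\colon(\alg{A}'[[\lambda]],\pi'')\to(\alg{B}[[\lambda]],\sigma)$ are both Poisson (by construction, being restrictions of the Poisson structure on $C$), and both are \emph{injective} ring homomorphisms (they are isomorphisms onto $C$), comparing brackets along these maps forces $\exp(\xi)$ to carry $\pi''$ to $\pi'$, i.e., $\exp(\mathcal{L}_\xi)\pi'' = \pi'$ — so $\pi'$ and $\pi''$ are equivalent in the sense defined at the start of Section~\ref{sec:defmorphism}. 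The one point to verify carefully is that the Poisson structure pulled back through an injective Poisson morphism is uniquely determined; this is immediate since injectivity means the bracket on the source is determined by the bracket on the target.

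The main obstacle I anticipate is purely bookkeeping rather than conceptual: making sure the induction in (i) is set up so that the obstruction $\eta_{k+1}$ genuinely lands in $\Der(\alg{A}')$ — i.e., that it is a derivation of the \emph{commutative} product of $\alg{A}'$ along the identity — which is where one must use that $C$ is exactly the commutant (so that the image of $\exp(X'')\phi_0'$ sits inside $C$, not just maps into $\alg{B}$) together with the Leibniz argument that already appeared in Lemma~\ref{PropertiesofRkplusOne}. Once that is in place, no new cohomological input is required, and both parts follow formally.
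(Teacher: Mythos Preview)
Your proposal is correct, and part (ii) matches the paper's argument exactly. For part (i), however, the paper bypasses the induction entirely using precisely the shortcut you flag in your final sentence: since Proposition~\ref{Deformationofthecommutant}(i) already shows that both $\exp(X')\phi_0'$ and $\exp(X'')\phi_0'$ are ring \emph{isomorphisms} from $\alg{A}'[[\lambda]]$ onto $C$, one simply defines $a'\mapsto a''$ via $\exp(X'')\phi_0'(a'')=\exp(X')\phi_0'(a')$, observes this is a ring automorphism of $\alg{A}'[[\lambda]]$ equal to the identity in zeroth order, and concludes it has the form $\exp(\xi)$ for some $\xi\in\lambda\Der(\alg{A}')[[\lambda]]$. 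Your order-by-order construction is a valid alternative---and your key observation, that the obstruction $\eta_{k+1}$ lands in $\alg{A}'$ because the difference of two elements of $C$ starting at order $\lambda^{k+1}$ has its leading coefficient in the zeroth-order commutant $\alg{A}'$, is correct---but it replicates work already absorbed into Proposition~\ref{Deformationofthecommutant}(i).
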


\begin{proof}
    Let $X'$ and $X''$ be two solutions to Problem~\ref{prob:comm}. Given $a' \in \alg{A}'[[\lambda]]$, by part (i) of the previous proposition there is a unique $a'' \in \alg{A}'[[\lambda]]$ such that
$$
        \exp(X'') \phi_0'(a'') = \exp(X') \phi_0'(a'),
$$
and the map $a' \mapsto a''$  defines an automorphism of $\alg{A}'[[\lambda]]$ starting at identity, so
$a'' = \exp(\xi)(a')$ for some derivation $\xi \in \lambda\Der(\alg{A}')[[\lambda]]$.

    Regarding (ii), suppose that $X''$ is another derivation as in Proposition~\ref{Deformationofthecommutant}, defining a Poisson structure $\pi''$ on $\alg{A}'[[\lambda]]$; then it follows from the equivalence of solutions in (i) that there is a derivation $\xi\in \lambda \Der(\alg{A}')[[\lambda]]$ satisfying \eqref{eq:equivprobcomm}, which implies that $\exp(\xi)\colon (\alg{A}'[[\lambda]],\pi')\to (\alg{A}'[[\lambda]],\pi'')$ is a Poisson isomorphism, and hence $\pi'$ and $\pi''$ are equivalent.
    \end{proof}

We now discuss conditions for the existence of solutions to Problem~\ref{prob:comm}. We will look for the desired derivation $X'\in\lambda\Der(\alg{B})[[\lambda]]$ inductively. Suppose that $X'_{(k)}$ solves Problem~\ref{prob:comm} up to order $k$, meaning that, for all  $a' \in \alg{A}'$
and $a\in\alg{A}$,
\begin{equation}
    \label{FkPlusOnedefiningrelation}
    \sigma\big(\exp(X'_{(k)})\phi_0'(a'), \Phi(a)\big)
    =
    \lambda^{k+1}F_{k+1}(a', a)
    +
    \cdots,
\end{equation}
for some bilinear map $F_{k+1}\colon \alg{A}' \times \alg{A} \rightarrow \alg{B}$.  Note that $X'_{(0)} = 0$ solves the problem at order $k = 0$,  so it serves as our starting point.

\begin{lemma}\label{lem:closedD}
    For each $a' \in \alg{A}'$, consider the map $D:= F_{k+1}(a',\cdot)\colon\alg{A}\to \alg{B}$.
Then $D\in \CCEder^1(\alg{A}, \alg{B})$ and $\delta D = 0$.
\end{lemma}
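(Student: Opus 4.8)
The strategy mirrors the proof of Lemma~\ref{PropertiesofRkplusOne}: I would extract the two structural properties (being a derivation along $\phi_0$, and being a cocycle) by expanding the appropriate identities order by order in $\lambda$ and collecting the term of degree $\lambda^{k+1}$, where everything of lower order vanishes by the inductive hypothesis \eqref{FkPlusOnedefiningrelation} and where the coefficients of $\sigma$ and $\pi$ that survive are the zeroth-order brackets $\sigma_0, \pi_0$.

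First, the derivation property. I would write $\Psi_{(k)} := \exp(X'_{(k)})\phi_0'$, which is a morphism of commutative rings (since $\exp(X'_{(k)})$ is a ring automorphism of $\alg{B}[[\lambda]]$ and $\phi_0'$ is a ring morphism). Then the left-hand side of \eqref{FkPlusOnedefiningrelation}, viewed as a function of $a'$, satisfies a Leibniz-type rule: using that $\sigma$ is a biderivation in the first slot and that $\Psi_{(k)}$ is multiplicative, $\sigma(\Psi_{(k)}(a_1'a_2'),\Phi(a)) = \Psi_{(k)}(a_1')\,\sigma(\Psi_{(k)}(a_2'),\Phi(a)) + \sigma(\Psi_{(k)}(a_1'),\Phi(a))\,\Psi_{(k)}(a_2')$. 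Taking the $\lambda^{k+1}$ coefficient, and noting that $\Psi_{(k)}$ is the identity-type map $\phi_0'$ in zeroth order (so $\Psi_{(k)}(a_i') = a_i' + O(\lambda)$ inside $\alg{B}$, where $a_i' \in \alg{A}' \subseteq \alg{B}$), one gets $F_{k+1}(a_1'a_2', a) = \phi_0'(a_1')F_{k+1}(a_2',a) + F_{k+1}(a_1',a)\phi_0'(a_2')$. With $a'$ fixed, this is exactly condition \eqref{dercochains} for $D = F_{k+1}(a',\cdot)$ as a $1$-cochain in $\CCEder^1(\alg{A},\alg{B})$ — here I use that $F_{k+1}$ is also a derivation along $\phi_0$ in its \emph{second} argument $a$ (for the same reason, since $\sigma$ is a biderivation in the second slot and $\Phi$ is multiplicative), which is what \eqref{dercochains} for the $\alg{A}$-argument demands. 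I should also record that $F_{k+1}(a',\cdot)$ is $\mathbb{K}$-linear in $a$, which is immediate.

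Second, the cocycle property $\delta D = 0$. Since $\CCEder^1(\alg{A},\alg{B})$ sits inside $\CCE^1(\alg{A},\alg{B})$ and $\delta$ restricts, I just need $\delta D = 0$ as a $2$-cochain, i.e. for $a_1,a_2 \in \alg{A}$:
\begin{equation*}
\{\phi_0(a_1), D(a_2)\}_\alg{B} - \{\phi_0(a_2), D(a_1)\}_\alg{B} - D(\{a_1,a_2\}_\alg{A}) = 0.
\end{equation*}
This should come from applying the Jacobi identity for $\sigma$ to the triple $\big(\Psi_{(k)}(a'), \Phi(a_1), \Phi(a_2)\big)$ and using that $\Phi$ is a Poisson morphism, so $\sigma(\Phi(a_1),\Phi(a_2)) = \Phi(\pi(a_1,a_2))$. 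Concretely: Jacobi gives
\begin{equation*}
\sigma\big(\Psi_{(k)}(a'), \sigma(\Phi(a_1),\Phi(a_2))\big) = \sigma\big(\sigma(\Psi_{(k)}(a'),\Phi(a_1)),\Phi(a_2)\big) + \sigma\big(\Phi(a_1),\sigma(\Psi_{(k)}(a'),\Phi(a_2))\big).
\end{equation*}
Rewriting the left side via $\sigma(\Phi(a_1),\Phi(a_2)) = \Phi(\pi(a_1,a_2))$ turns it into the defining expression \eqref{FkPlusOnedefiningrelation} evaluated at the argument $\pi(a_1,a_2) \in \alg{A}[[\lambda]]$, hence equals $\lambda^{k+1}F_{k+1}(a', \pi(a_1,a_2)) + \cdots$; expanding $\pi = \pi_0 + \lambda\pi_1 + \cdots$ and taking the $\lambda^{k+1}$ coefficient leaves $F_{k+1}(a', \{a_1,a_2\}_\alg{A})$, i.e. $D(\{a_1,a_2\}_\alg{A})$. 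On the right side, each inner bracket $\sigma(\Psi_{(k)}(a'),\Phi(a_i))$ is $\lambda^{k+1}F_{k+1}(a',a_i) + \cdots$ by \eqref{FkPlusOnedefiningrelation}, so taking the $\lambda^{k+1}$ coefficient and using that $\Phi(a_i) = \phi_0(a_i) + O(\lambda)$ and $\sigma = \sigma_0 + O(\lambda)$, the right side becomes $\{\phi_0(a_1), F_{k+1}(a',a_2)\}_\alg{B} + \{F_{k+1}(a',a_1),\phi_0(a_2)\}_\alg{B}$ — wait, with correct signs, $-\{\phi_0(a_2),F_{k+1}(a',a_1)\}_\alg{B} + \{\phi_0(a_1),F_{k+1}(a',a_2)\}_\alg{B}$. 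Equating the two sides yields precisely $\delta D = 0$.

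**Expected main obstacle.** The computations are routine once the bookkeeping is set up; the delicate point is keeping track of \emph{which} terms survive at order $\lambda^{k+1}$ and with \emph{which} Poisson structure they are paired. The inductive hypothesis \eqref{FkPlusOnedefiningrelation} says the relevant quantities start at order $k+1$, so when such a quantity is fed into $\sigma$ or into $\pi$, only the zeroth-order part ($\sigma_0$, resp. $\pi_0$) of the other slot contributes to the $\lambda^{k+1}$ coefficient — this is exactly the mechanism (already used in Lemma~\ref{PropertiesofRkplusOne}) that converts a formal identity into one involving the undeformed brackets, consistent with Remark~\ref{remark:CohomologyAlwaysForUndeformedPoisson}. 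Getting the signs in the Jacobi identity and in $\delta$ to match up is the only place that requires care.
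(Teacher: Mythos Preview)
Your proposal is correct and follows essentially the same approach as the paper: the derivation property along $\phi_0$ in the $\alg{A}$-argument comes from the Leibniz rule for $\sigma$ combined with the multiplicativity of $\Phi$, and closedness comes from the Jacobi identity for $\sigma$ applied to the triple $(\exp(X'_{(k)})\phi_0'(a'),\Phi(a_1),\Phi(a_2))$, together with $\Phi$ being Poisson and extraction of the $\lambda^{k+1}$ coefficient. The only superfluous detour is your derivation of the Leibniz rule in the $a'$-variable, which is not needed for this lemma (though the paper does use it later, in the proof of Proposition~\ref{prop:existprobcomm}); what matters here is the derivation property in the $a$-variable, which you do identify correctly.
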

\begin{proof}
The fact that $D$ belongs to $\CCEder^1(\alg{A}, \alg{B})$ (i.e., it is a derivation along $\phi_0$) is a direct consequence of the Leibniz rule for $\sigma$.

    Consider $a_1, a_2\in\alg{A}$ and $a' \in\alg{A}'$. We use the Jacobi
    identity for $\sigma$, the defining
    Equation~\eqref{FkPlusOnedefiningrelation} for $F_{k+1}$ and the
    fact that $\Phi$ is a Poisson morphism to compute
    \begin{align*}
        &\sigma\Big(
        \Phi(a_1),
        \sigma\big(\Phi(a_2), \exp(X'_{(k)})\phi'_0(a') \big)
        \Big) \\
        &\quad=
        \sigma\Big(
        \sigma\big(\Phi(a_1), \Phi(a_2)\big),
        \exp(X'_{(k)})\phi'_0(a')
        \Big)
        +
        \sigma\Big(
        \Phi(a_2),
        \sigma\big(\Phi(a_1), \exp(X'_{(k)})\phi'_0(a')\big)
        \Big) \\
        &\quad=
        \sigma\big(\Phi(\pi(a_1, a_2)), \exp(X'_{(k)})\phi'_0(a')\big)
        -
        \sigma\big(
        \Phi(a_2),
        \lambda^{k+1}F_{k+1}(a', a_1)
        +
        \cdots
        \big)\\
        &\quad=
        -
        \lambda^{k+1}F_{k+1}(a', \pi(a_1, a_2))
        +
        \cdots
        -
        \lambda^{k+1}\{\phi_0(a_2), F_{k+1}(a', a_1)\}_\alg{B}
        +
        \cdots \\
        &\quad=
        -
        \lambda^{k+1}F_{k+1}(a', \{a_1, a_2\}_\alg{A})
        -
        \lambda^{k+1}\{\phi_0(a_2), F_{k+1}(a', a_1)\}_\alg{B}
        +
        \cdots.
    \end{align*}
    On the other hand, expanding the left-hand side, we have
    \begin{align*}
        \sigma\Big(
        \Phi(a_1),
        \sigma\big(\Phi(a_2), \exp(X'_{(k)})\phi'_0(a')\big)
        \Big)
        &=
        \sigma\big(
        \Phi(a_1),
        - \lambda^{k+1}F_{k+1}(a',a_2)
        + \cdots
        \big) \\
        &=
        -
        \lambda^{k+1}\{\phi_0(a_1), F_{k+1}(a', a_2)\}_\alg{B}
        +
        \cdots.
    \end{align*}
    Hence, the map $D$ satisfies
    \begin{align*}
        \delta D(a_1, a_2)
        &=
        \{\phi_0(a_1), D(a_2)\}_\alg{B}
        -
        \{\phi_0(a_2), D(a_1)\}_\alg{B}
        -
        D(\{a_1, a_2\}_\alg) \\
        &=
        \{\phi_0(a_1), F_{k+1}(a', a_2)\}_\alg{B}
        -
        \{\phi_0(a_2), F_{k+1}(a', a_1)\}_\alg{B}
        -
        F_{k+1}(a', \{a_1, a_2\}_\alg{A})
        \\
        &= 0.
    \end{align*}
\end{proof}

To construct a solution up to order $k+1$, take $X'_{k+1} \in \Der(\alg{B})$ and notice that
\begin{align*}
    \sigma\big(
    \exp(\lambda^{k+1}X'_{k+1})\exp(X'_{(k)})\phi_0'(a'),
    \Phi(a)
    \big)
    &=
    \sigma\big(
    (\id + \lambda^{k+1}X'_{k+1} + \cdots)
    \exp(X'_{(k)})\phi_0'(a'),
    \Phi(a)
    \big) \\
    &=
    \sigma\big(\exp(X'_{(k)})\phi_0(a'), \Phi(a)\big)
    +
    \lambda^{k+1}\{X'_{k+1}\phi_0'(a'), \phi_0(a)\}_\alg{B}
    +
    \cdots \\
    &=
    \lambda^{k+1}F_{k+1}(a', a)
    +
    \lambda^{k+1}\{X'_{k+1}\phi_0(a'), \phi_0(a)\}_\alg{B}
    +
    \cdots.
\end{align*}
Hence, we need that $X'_{k+1}$ satisfies
\begin{equation}
    \label{equationforXPrime}
    \{\phi_0(a), X'_{k+1}\phi_0'(a')\}_\alg{B}
    =
    F_{k+1}(a', a),
\end{equation}
for all $a' \in \alg{A}'$ and $a \in \alg{A}$. Fixing $a'$ and, letting $b=X'_{k+1}\phi_0'(a')$ and $D=F_{k+1}(a',\cdot)$, this last equation reads
\begin{equation}\label{eq:deltab}
\delta b = D.
\end{equation}

With the set-up of Problem~\ref{prob:comm}, we consider the following additional assumptions:
\begin{itemize}
    \item [(1)] There exists an $\alg{A}'$-linear map $\varphi_1\colon \CCEder^1(\alg{A},
    \alg{B})\to \CCEder^{0}(\alg{A}, \alg{B})$ with the property that $\delta D=0$ implies that $D=\delta \varphi_1(D)$;
    \item[(2)] There is
a horizontal lift along $\phi_0'$ (in the sense of Definition~\ref{horizlift}), i.e., an $\alg{A}'$-linear map $\CCEder^1(\alg{A}',
    \alg{B})\to \CCEder^1(\alg{B},
    \alg{B})$, $D\mapsto D^h$, satisfying $D^h(\phi_0'(a'))=D(a')$ for all $a'\in \alg{A}'$.
\end{itemize}

Note that (1) immediately implies the vanishing of $\HCEder^1(\alg{A}, \alg{B})$; in the geometric setting of interest, the map $\varphi_1$ will naturally arise as part of a chain homotopy.

\begin{proposition}\label{prop:existprobcomm}
    Suppose that (1) and (2) hold. Then  there exists a
    derivation $X' \in \lambda\Der(\alg{B})[[\lambda]]$ such that
    $\exp(X')\phi_0' (\alg{A}'[[\lambda]]) \subseteq C$ (hence solving Problem~\ref{prob:comm})
    \end{proposition}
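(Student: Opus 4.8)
The plan is to build $X'$ by the order-by-order induction already laid out before the statement, now using hypotheses (1) and (2) to carry out the inductive step. The base case is $X'_{(0)}=0$, which satisfies \eqref{FkPlusOnedefiningrelation}; the task is to promote a derivation $X'_{(k)}$ solving Problem~\ref{prob:comm} to order $k$ into one solving it to order $k+1$.

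So assume $X'_{(k)}$ satisfies \eqref{FkPlusOnedefiningrelation}. As recorded just before the statement, it suffices to find $X'_{k+1}\in\Der(\alg{B})$ solving \eqref{equationforXPrime} for all $a'\in\alg{A}'$, $a\in\alg{A}$; equivalently, writing $b:=X'_{k+1}\phi_0'(a')$ and $D_{a'}:=F_{k+1}(a',\cdot)$, one needs $\delta b = D_{a'}$ as in \eqref{eq:deltab} for each $a'$, together with the requirement that these choices of $b$ fit into a single derivation of $\alg{B}$. By Lemma~\ref{lem:closedD}, each $D_{a'}$ is a cocycle in $\CCEder^1(\alg{A},\alg{B})$, so hypothesis (1) supplies $b(a'):=\varphi_1(D_{a'})\in\CCEder^0(\alg{A},\alg{B})=\alg{B}$ with $\delta b(a')=D_{a'}$, i.e. $\{\phi_0(a),b(a')\}_\alg{B}=F_{k+1}(a',a)$ for all $a$.

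The crux is then to verify that $a'\mapsto b(a')$ is a derivation along $\phi_0'$, since only then does hypothesis (2) apply to produce a single $X'_{k+1}:=\xi_{k+1}^h\in\Der(\alg{B})$ with $X'_{k+1}\phi_0'(a')=b(a')$ for every $a'$ at once. I would first check that $a'\mapsto D_{a'}$ is a derivation along $\phi_0'$ valued in the $\alg{B}$-module $\CCEder^1(\alg{A},\alg{B})$: since $\exp(X'_{(k)})\phi_0'$ is a ring homomorphism whose zeroth-order term is $\phi_0'$, applying the Leibniz rule for $\sigma$ to $\sigma\big(\exp(X'_{(k)})\phi_0'(a_1'a_2'),\Phi(a)\big)$ and reading off the coefficient of $\lambda^{k+1}$ gives $F_{k+1}(a_1'a_2',a)=\phi_0'(a_1')F_{k+1}(a_2',a)+F_{k+1}(a_1',a)\phi_0'(a_2')$. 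Composing with the $\alg{A}'$-linear map $\varphi_1$ turns this into $b(a_1'a_2')=\phi_0'(a_1')b(a_2')+b(a_1')\phi_0'(a_2')$, so $\xi_{k+1}\colon a'\mapsto b(a')$ lies in $\CCEder^1(\alg{A}',\alg{B})$, its horizontal lift $X'_{k+1}:=\xi_{k+1}^h$ (Definition~\ref{horizlift}, hypothesis (2)) is defined, and by construction it solves \eqref{equationforXPrime}. Setting $\exp(X'_{(k+1)}):=\exp(\lambda^{k+1}X'_{k+1})\exp(X'_{(k)})$ completes the inductive step.

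Finally, since consecutive $X'_{(k)}$ agree modulo $\lambda^{k+1}$, they converge $\lambda$-adically to some $X'\in\lambda\Der(\alg{B})[[\lambda]]$, and $\sigma\big(\exp(X')\phi_0'(a'),\Phi(a)\big)=0$ to all orders shows $\exp(X')\phi_0'(\alg{A}'[[\lambda]])\subseteq C$, so $X'$ solves Problem~\ref{prob:comm}. I expect the only genuine obstacle to be the compatibility check in the previous paragraph — that splitting the cocycles $D_{a'}$ pointwise in $a'$ via $\varphi_1$ returns a derivation along $\phi_0'$ — which is precisely where the $\alg{A}'$-linearity built into hypothesis (1) is used; everything else is bookkeeping with the differential $\delta$ and the expansion displayed just before the statement.
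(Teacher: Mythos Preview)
Your proof is correct and follows essentially the same route as the paper's: apply Lemma~\ref{lem:closedD} and hypothesis (1) to split each cocycle $D_{a'}=F_{k+1}(a',\cdot)$ via $\varphi_1$, use the Leibniz rule for $\sigma$ to see that $a'\mapsto D_{a'}$ is a derivation along $\phi_0'$, invoke the $\alg{A}'$-linearity of $\varphi_1$ to conclude that $a'\mapsto\varphi_1(D_{a'})$ lies in $\CCEder^1(\alg{A}',\alg{B})$, and then lift via hypothesis (2). The only differences are cosmetic (your $\xi_{k+1}$ is the paper's $Y$) and your explicit remark on $\lambda$-adic convergence of the $X'_{(k)}$, which the paper leaves implicit.
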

\begin{proof}
 We must solve \eqref{eq:poiscomm} and, for that, according to the inductive procedure, it is enough to find
    $X'_{k+1} \in \Der(\alg{B})$ satisfying
    \eqref{equationforXPrime}. Using Lemma~\ref{lem:closedD} and (1), we have that  $F_{k+1}$ satisfies, for each fixed $a'\in \alg{A}'$,
    \begin{equation*}
        F_{k+1}(a',\cdot) =\delta (\varphi_1 (F_{k+1}(a',\cdot))).
    \end{equation*}
Comparing with \eqref{eq:deltab}, it suffices to find $X'_{k+1}$ such that $X'_{k+1}(\phi_0'(a')) = \varphi_1(F_{k+1}(a',\cdot))$ for all $a'$.

Just as in Lemma~\ref{lem:closedD}, the Leibniz rule for $\sigma$ in \eqref{FkPlusOnedefiningrelation} ensures that
$$
F_{k+1}(a'_1a_2',\cdot) =  \phi_0'(a_1')F_{k+1}(a_2',\cdot) + \phi_0'(a_2')F_{k+1}(a_1',\cdot),
$$
so, since $\varphi_1$ is $\alg{A}'$-linear, the map $Y\colon \alg{A}'\to \alg{B}$, $a'\mapsto \varphi_1(F_{k+1}(a',\cdot))$, satisfies
$$
Y(a'_1 a'_2) = \phi_0'(a_1') Y(a_2') + \phi_0'(a_2') Y(a_1'),
$$
i.e., $Y\in \CCEder^1(\alg{A}',\alg{B})$. Now (2) guarantees that there exists $X'_{k+1}= Y^h \in \CCEder^1(\alg{B},\alg{B})=\Der(\alg{B})$ such that $X'_{k+1}(\phi_0'(a')) = Y(a') = \varphi_1(F_{k+1}(a',\cdot))$, as desired.
\end{proof}

\subsection{Uniqueness of the construction}\label{subsec:uniquenessdiagram}
We have seen that, starting with a Poisson morphism $\phi_0\colon(\alg{A},\pi_0)\to (\alg{B},\sigma_0)$, solutions to Problems~\ref{Deformationproblem} and \ref{prob:comm} lead to deformations \eqref{eq:deformedpoissonmaps} of the ``bimodule''
\begin{equation}\label{eq:initialbim}
(\alg{A},\pi_0)\stackrel{\phi_0}{\longrightarrow}
(\alg{B},\sigma_0) \stackrel{\phi_0'}{\longleftarrow} (\alg{A}',\pi'_0),
\end{equation}
where we recall that $\alg{A}'$ is the Poisson commutant of $\phi_0(\alg{A})$ in $\alg{B}$ equipped with its natural Poisson structure and $\phi_0'$ the inclusion map. We now discuss simple properties of these ``deformed bimodules''.

For each $i=1,2$, let
\begin{equation}\label{eq:bimodulediag}
(\alg{A}[[\lambda]],\pi^\ind)\stackrel{\Phi^\ind}{\longrightarrow} (\alg{B}[[\lambda]],\sigma^\ind) \stackrel{{\Phi^\ind}'}{\longleftarrow}({\alg{A}}'[[\lambda]],{\pi^\ind}')
\end{equation}
be a diagram of Poisson morphisms with Poisson commuting images whose underlying zeroth order diagram is \eqref{eq:initialbim}.

\begin{proposition}\label{prop:inducedmap}
    Suppose that there are Poisson isomorphisms $\psi\colon (\alg{A}[[\lambda]],\pi^\1)\to (\alg{A}[[\lambda]],\pi^\2)$ and $\Psi\colon (\alg{B}[[\lambda]],\sigma^\1)\to (\alg{B}[[\lambda]],\sigma^\2)$ with
    $\Phi^\2\circ \psi = \Psi \circ \Phi^\1$. Then there is a Poisson isomorphism $\psi'\colon ({\alg{A}}'[[\lambda]],{\pi^\1}')\to ({\alg{A}}'[[\lambda]],{\pi^\2}')$ such that ${\Phi^\2}'\circ \psi'=\Psi\circ {\Phi^\1}'$.
\end{proposition}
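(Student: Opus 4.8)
The plan is to produce $\psi'$ explicitly as the composition $\psi':=({\Phi^\2}')^{-1}\circ\Psi\circ{\Phi^\1}'$ and then to verify that each factor is an isomorphism of the relevant kind, so that the intertwining identity and the Poisson property of $\psi'$ follow with essentially no extra work. The one step that requires care is pinning down the image of each ${\Phi^\ind}'$. Because the zeroth order of \eqref{eq:bimodulediag} is \eqref{eq:initialbim}, the map ${\Phi^\ind}'_0=\phi_0'$ is the inclusion of the \emph{full} classical commutant $\alg{A}'=(\phi_0(\alg{A}))^c$, whereas the Poisson commuting images condition only yields ${\Phi^\ind}'(\alg{A}'[[\lambda]])\subseteq C^\ind$, where $C^\ind:=(\Phi^\ind(\alg{A}[[\lambda]]))^c$ is the commutant with respect to $\sigma^\ind$. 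Running the same order-by-order bootstrap as in the proof of Proposition~\ref{prop:commimage} (equivalently, invoking Proposition~\ref{Deformationofthecommutant}\,(i)) shows that ${\Phi^\ind}'\colon\alg{A}'[[\lambda]]\to C^\ind$ is in fact a ring isomorphism: injectivity is the standard leading-order argument for a deformation of an injective map, and surjectivity holds because the zeroth-order image already exhausts $\alg{A}'$, so any $b\in C^\ind$ may be corrected order by order inside ${\Phi^\ind}'(\alg{A}'[[\lambda]])$ (using that $C^\ind$ is a $\mathbb{K}[[\lambda]]$-submodule of the $\lambda$-torsion-free module $\alg{B}[[\lambda]]$). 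Since ${\Phi^\ind}'$ is a Poisson morphism onto the Poisson subalgebra $(C^\ind,\sigma^\ind|_{C^\ind})$, it is therefore a Poisson isomorphism $(\alg{A}'[[\lambda]],{\pi^\ind}')\xrightarrow{\sim}(C^\ind,\sigma^\ind|_{C^\ind})$; in particular ${\pi^\ind}'$ is forced to be the pullback of $\sigma^\ind$ along ${\Phi^\ind}'$, consistently with Proposition~\ref{Deformationofthecommutant}\,(ii).

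The second step is to show that $\Psi$ carries $C^\1$ onto $C^\2$. From $\Psi\circ\Phi^\1=\Phi^\2\circ\psi$ and the surjectivity of the automorphism $\psi$ of $\alg{A}[[\lambda]]$ we obtain $\Psi\big(\Phi^\1(\alg{A}[[\lambda]])\big)=\Phi^\2(\alg{A}[[\lambda]])$. Now, for $b\in\alg{B}[[\lambda]]$ and $a\in\alg{A}[[\lambda]]$, the Poisson property of $\Psi$ gives $\sigma^\2(\Psi(b),\Phi^\2(a))=\Psi\big(\sigma^\1(b,\Phi^\1(\psi^{-1}(a)))\big)$, and as $a$ ranges over $\alg{A}[[\lambda]]$ the element $\Phi^\1(\psi^{-1}(a))$ ranges over all of $\Phi^\1(\alg{A}[[\lambda]])$. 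Hence $b\in C^\1$ if and only if $\Psi(b)\in C^\2$, i.e.\ $\Psi(C^\1)=C^\2$, so $\Psi$ restricts to a Poisson isomorphism $(C^\1,\sigma^\1|_{C^\1})\xrightarrow{\sim}(C^\2,\sigma^\2|_{C^\2})$.

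Putting the two steps together, $\psi':=({\Phi^\2}')^{-1}\circ\Psi\circ{\Phi^\1}'$ is a composition of Poisson isomorphisms $(\alg{A}'[[\lambda]],{\pi^\1}')\xrightarrow{\sim}(C^\1,\sigma^\1|_{C^\1})\xrightarrow{\sim}(C^\2,\sigma^\2|_{C^\2})\xrightarrow{\sim}(\alg{A}'[[\lambda]],{\pi^\2}')$, hence itself a Poisson isomorphism, and by construction ${\Phi^\2}'\circ\psi'=\Psi\circ{\Phi^\1}'$, as required. The main obstacle is the surjectivity claim in the first step — that ${\Phi^\ind}'$ hits the \emph{entire} deformed commutant $C^\ind$ and not merely a subalgebra of it — since this is exactly the point at which the hypothesis that the undeformed right leg is the inclusion of the full classical commutant is used; once that is granted, the rest of the argument is purely formal.
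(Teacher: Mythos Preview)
Your proof is correct and follows essentially the same approach as the paper: you show that each ${\Phi^\ind}'$ is a Poisson isomorphism onto the deformed commutant $C^\ind$ (by the same order-by-order argument as in Proposition~\ref{prop:commimage}/Proposition~\ref{Deformationofthecommutant}(i)), then that $\Psi$ carries $C^\1$ to $C^\2$ via the intertwining relation, and finally define $\psi'=({\Phi^\2}')^{-1}\circ\Psi\circ{\Phi^\1}'$. Your treatment is slightly more explicit than the paper's in that you establish the full equality $\Psi(C^\1)=C^\2$ directly (via the ``if and only if'' computation), whereas the paper only checks the inclusion $\Psi(C_1)\subseteq C_2$ and leaves the reverse inclusion implicit (it follows by applying the same argument to $\Psi^{-1}$, $\psi^{-1}$).
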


\begin{proof}
    Let $C_i$ be the Poisson commutator of $\Phi^\ind(\alg{A}[[\lambda]])$ in $(\alg{B}[[\lambda]],\sigma^\ind)$, which has a Poisson structure given by the restriction of $\sigma^\ind$,  $i=1,2$. As before (c.f. the proof of Proposition~\ref{prop:commimage}), since $\phi_0'$ is injective, the maps ${\Phi^\ind}'\colon ({\alg{A}}'[[\lambda]],{\pi^\ind}')\to C_i$, $i=1,2$, are Poisson isomorphisms. The result follows once we verify that $\Psi(C_1)\subseteq C_2$, since in this case we define $\psi'= ({\Phi^\2}'|_{C_2})^{-1} \circ \Psi \circ {\Phi^\1}'$.

    Now note that if $b$ satisfies $\sigma^\1(b, \Phi^\1(a))=0$ for $a\in \alg{A}[[\lambda]]$, then
    $$
    0=\sigma^\2(\Psi(b),\Psi(\Phi^\1(a)))= \sigma^\2(\Psi(b),\Phi^\2(\psi(a))),
    $$
    which directly implies that if $b\in C_1$, then $\Psi(b)\in C_2$.
    \end{proof}

For the given Poisson morphism $\phi_0\colon(\alg{A},\pi_0)\to (\alg{B},\sigma_0)$, suppose that Problems~\ref{Deformationproblem} and \ref{prob:comm} admit unique solutions (up to equivalence) for any formal Poisson deformations $\pi$ and $\sigma$ of $\pi_0$ and $\sigma_0$, respectively (sufficient conditions are described in Propositions~\ref{Existence}, \ref{Uniqueness} and \ref{prop:existprobcomm}). Then the previous proposition implies the existence of a well-defined map
\begin{equation}\label{eq:algclassmap}
\FPois_{\sigma_0}(\alg{B}) \times    \FPois_{\pi_0}(\alg{A})   \to \FPois_{\pi'_0}(\alg{A}'), \qquad ([\sigma],[\pi])\mapsto [\pi'],
\end{equation}
defined as follows. For representatives $\pi$ and $\sigma$, we choose a solution $\Phi=\exp(X)\phi_0$ to Problem~\ref{Deformationproblem}, then choose a solution $\Phi'=\exp(X')\phi_0'$ to Problem~\ref{prob:comm}, which in turn defines, according to Proposition~\ref{Deformationofthecommutant}, a unique Poisson structure $\pi'$ on $\alg{A}'[[\lambda]]$ such that
$$
(\alg{A}[[\lambda]],\pi)\stackrel{\Phi}{\longrightarrow} (\alg{B}[[\lambda]],\sigma) \stackrel{\Phi'}{\longleftarrow}(\alg{A}'[[\lambda]],\pi')
$$
is a diagram of Poisson maps with commuting images.
Proposition~\ref{prop:unique}, part (ii), along with Proposition~\ref{prop:inducedmap} above, ensure that the equivalence class of $\pi'$ is independent of the choice of representatives $\pi$ and $\sigma$, or specific maps $\Phi$ and $\Phi'$.

%

\section{The geometric set-up}
\label{sec:ClassifyingMap}

In this section we begin our study of Morita equivalence of formal Poisson structures vanishing in zeroth order. The objects of interest are thus equivalence bimodules
\begin{equation}\label{eq:bimformal}
(\Cinf(P_1)[[\lambda]], \pi^\1)
        \stackrel{\Phi^\1}{\longrightarrow}
        (\Cinf(S)[[\lambda]], \omega)
        \stackrel{\Phi^\2}\longleftarrow
        (\Cinf(P_2)[[\lambda]], \pi^\2),
\end{equation}
with $\pi^\1_0 = \pi^\2_0 = 0$. Setting $\lambda=0$, we see that in zeroth order we have a Morita equivalence of the trivial Poisson manifolds $(P_1,0)$ and $(P_2,0)$. In such a case, it is a direct verification that $P_1$ and $P_2$ must be diffeomorphic, so we may as well assume that $P_1=P_2 =P$. It follows that the zeroth order bimodule underlying \eqref{eq:bimformal}  is a self Morita equivalence of $(P,0)$,
$$
(P,0) \stackrel{J_1}{\longleftarrow} (S,\omega_0) \stackrel{J_2}{\longrightarrow} (P,0).
$$
As shown in \cite{BursztynWeinstein-Picard,BursztynFernandes-Picard}, such bimodule is necessarily  isomorphic to one where $S=T^*P$, $J_1=\rho\colon T^*P\to P$ is the natural projection, and
$$
\omega_0 = \omega_{B_0}:= \omega_{\mathrm{can}} + \rho^*B_0,
$$
where $\omega_{\mathrm{can}}$ is the canonical symplectic form on $T^*P$ and $B_0$ is a closed 2-form on $P$. So in order to study equivalence bimodules as in \eqref{eq:bimformal}, we will start with
the Poisson map
\begin{equation}\label{eq:poissonmap}
\rho\colon (T^*P, \omega_{B_0}) \to (P,0)
\end{equation}
and use the results of Section~\ref{sec:defmorphism} to analyze  Problems~\ref{Deformationproblem} and \ref{prob:comm} in this case; i.e., we will set $\alg{A}=C^\infty(P)$ with the zero Poisson bracket, $\alg{B}= C^\infty(T^*P)$ with Poisson bracket $\sigma_0$ determined by $\omega_{B_0}$, and $\phi_0 =  \rho^* \colon \alg{A}\to \alg{B}$.
For this specific geometric example, we will replace Poisson algebras by the corresponding manifolds in the notation, so
we will denote the complex
$(\CCEder^\bullet(\alg{A},\alg{B}), \delta)$ and cohomology $\HCEder^\bullet(\alg{A},\alg{B})$ by $(\CCEder^\bullet(P,T^*P), \delta)$ and $\HCEder^\bullet(P,T^*P)$.

\subsection{Vanishing of cohomology}\label{subsec:vanishing}

To calculate the cohomology $\HCEder^\bullet(P,T^*P)$, let us first spell out the complex $(\CCEder^\bullet(P,T^*P), \delta)$ in our geometric situation. Following Section~\ref{subsec:CEcoh}, elements in $\CCEder^k(P, T^*P)$ are $k$-linear, skewsymmetric maps
$$
D\colon C^\infty(P)\times \ldots \times C^\infty(P)\to C^\infty(T^*P)
$$
satisfying
$$
D(f_1,\ldots,f_l f_l',\ldots,f_k)= \rho^*(f_l) D(f_1,\ldots,f_l',\ldots,f_k) + D(f_1,\ldots,f_l,\ldots,f_k)\rho^*(f_l'),
$$
and hence $\CCEder^k(P, T^*P) = \Gamma(\rho^* \wedge^k TP)$. The differential \eqref{eq:CEdiff} becomes
\begin{align*}
(\delta D)(f_0, \ldots, f_{k}) & =
    \sum_{j=0}^{k} (-1)^{j}
    \big\{    \rho^*(f_j),
    D(f_0, \ldots, \widehat{f_j}, \ldots, f_{k})
    \big\}_{\sigma_0} \\
    & = \sum_{j=0}^{k} (-1)^{j}
    \big\{    \rho^*(f_j),
    D(f_0, \ldots, \widehat{f_j}, \ldots, f_{k})
    \big\}_{can},
\end{align*}
where we have used that the Poisson structure on $P$ is trivial and $\{\rho^*(f),\cdot\}_{\sigma_0} = \{\rho^*(f),\cdot\}_{can}$ for all $f\in C^\infty(P)$ (cf. Lemma~\ref{lem:hamVFB}).

Consider the natural $C^\infty(P)$-module structure on $\CCEder^\bullet(P, T^*P)$ via $\rho^*$.

\begin{lemma}
    \label{tensorialityofdelta}%
    The differential $\delta$ is $\Cinf(P)$-linear.
\end{lemma}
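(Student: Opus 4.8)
The plan is to unwind the $\Cinf(P)$-module structure together with the explicit form of $\delta$ recorded above, after which the claim collapses to a single application of the Leibniz rule. Recall that $g\in\Cinf(P)$ acts on $D\in\CCEder^k(P,T^*P)=\Gamma(\rho^*\wedge^k TP)$ by $(gD)(f_1,\ldots,f_k)=\rho^*(g)\,D(f_1,\ldots,f_k)$, and that, since the Poisson bracket on $\alg{A}=\Cinf(P)$ is trivial, the combinatorial term in the Chevalley--Eilenberg differential drops out, leaving
\begin{equation*}
  (\delta D)(f_0,\ldots,f_k)=\sum_{j=0}^{k}(-1)^j\{\rho^*(f_j),\,D(f_0,\ldots,\widehat{f_j},\ldots,f_k)\}_{\mathrm{can}}.
\end{equation*}

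Then I would fix $g\in\Cinf(P)$ and $D\in\CCEder^k(P,T^*P)$ and apply the Leibniz rule for $\{\cdot,\cdot\}_{\mathrm{can}}$ term by term:
\begin{equation*}
  \{\rho^*(f_j),\,\rho^*(g)\,D(f_0,\ldots,\widehat{f_j},\ldots,f_k)\}_{\mathrm{can}}=\{\rho^*(f_j),\rho^*(g)\}_{\mathrm{can}}\,D(f_0,\ldots,\widehat{f_j},\ldots,f_k)+\rho^*(g)\,\{\rho^*(f_j),\,D(f_0,\ldots,\widehat{f_j},\ldots,f_k)\}_{\mathrm{can}}.
\end{equation*}
The point is that the first summand vanishes: since $\rho\colon(T^*P,\omega_{B_0})\to(P,0)$ is a Poisson map into the \emph{trivial} Poisson manifold, and $\{\rho^*(f),\cdot\}_{\sigma_0}=\{\rho^*(f),\cdot\}_{\mathrm{can}}$ on all of $\Cinf(T^*P)$, we get $\{\rho^*(f_j),\rho^*(g)\}_{\mathrm{can}}=\{\rho^*(f_j),\rho^*(g)\}_{\sigma_0}=0$. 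Hence each summand equals $\rho^*(g)\,\{\rho^*(f_j),D(f_0,\ldots,\widehat{f_j},\ldots,f_k)\}_{\mathrm{can}}$, and summing over $j$ yields $(\delta(gD))(f_0,\ldots,f_k)=\rho^*(g)\,(\delta D)(f_0,\ldots,f_k)=(g\,\delta D)(f_0,\ldots,f_k)$, which is the assertion.

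There is no genuine obstacle here; the only things to watch are the two reasons the computation collapses --- that the index-pair sum in $\delta$ disappears because $\alg{A}$ carries the zero bracket, and that $\rho^*(g)$ is central for the bracket against any $\rho^*(f_j)$ because $\rho^*(\Cinf(P))$ Poisson-commutes with itself. Both are manifestations of $\pi_0=0$ on $P$; in particular $\Cinf(P)$-linearity of $\delta$ is a special feature of this geometric set-up and should not be expected for a general Poisson morphism $\phi_0$.
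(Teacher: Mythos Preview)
Your proof is correct and follows essentially the same approach as the paper's: apply the Leibniz rule for $\{\cdot,\cdot\}_{\mathrm{can}}$ to each summand of $\delta(gD)$ and observe that the extra term $\{\rho^*f_j,\rho^*g\}_{\mathrm{can}}$ vanishes. The only difference is cosmetic---the paper uses $f$ for your $g$ and writes out the full chain of equalities, while you add a short explanatory remark about why $\Cinf(P)$-linearity is special to the case $\pi_0=0$.
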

\begin{proof}
For   $D \in \CCEder^k(P, T^*P)$ and $f, f_1, \ldots, f_{k+1} \in \Cinf(P)$, we have
        \begin{align*}
        \delta(fD)(f_1, \ldots, f_{k+1})
        &=
        \sum_{j=1}^{k+1}(-1)^{j+1}
        \left\{
            \rho^*f_j,
            (fD)(f_1, \ldots, \widehat{f_j}, \ldots, f_{k+1})
        \right\}_\canonical \\
        &=
        \sum_{j=1}^{k+1}(-1)^{j+1}
        \left\{
            \rho^*f_j,
            \rho^*f D(f_1, \ldots, \widehat{f_j}, \ldots, f_{k+1})
        \right\}_\canonical \\
        &=
        \sum_{j=1}^{k+1}(-1)^{j+1}
        \left\{
            \rho^*f_j,
            D(f_1, \ldots, \widehat{f_j}, \ldots, f_{k+1})
        \right\}_\canonical \rho^*f \\
        &\quad+
        \sum_{j=1}^{k+1}(-1)^{j+1}
        \underbrace{
          \{\rho^*f_j, \rho^*f\}_\canonical
        }_{=0}
        D(f_1, \ldots, \widehat{f_j}, \ldots, f_{k+1}) \\
        &=
        f(\delta D)(f_1, \ldots, f_{k+1}),
    \end{align*}
    so $\delta(f D) = f\delta D$.
\end{proof}

The vanishing of cohomology is shown in the next result.

\begin{proposition}\label{prop:vanishing}
 For $k\geq 1$, there exists a sequence $h$ of $C^\infty(P)$-linear maps $h_k\colon \CCEder^k(P,T^*P) \to \CCEder^{k-1}(P,T^*P)$ satisfying $\delta h + h \delta = \id$. In particular, $\HCEder^k(P,T^*P)=\{0\}$ for $k\geq 1$.
\end{proposition}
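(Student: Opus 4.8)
The plan is to identify $(\CCEder^\bullet(P, T^*P), \delta)$ with the complex of \emph{vertical} differential forms on the fibration $\rho\colon T^*P\to P$ equipped with the fiberwise de Rham differential, and then to produce a contracting homotopy in positive degrees by a fiberwise Poincaré‑lemma argument, exploiting that the fibers of $T^*P$ are vector spaces. Recall that $\CCEder^k(P, T^*P)=\Gamma(\rho^*\wedge^k TP)$. Writing $V:=\ker(d\rho)\subset T(T^*P)$ for the vertical distribution, the canonical identification of a cotangent space with its tangent spaces gives $V\cong\rho^*T^*P$, hence $\wedge^kV^*\cong\rho^*\wedge^kTP$, so $\CCEder^k(P,T^*P)$ is the space $\Omega^k_{\mathrm{vert}}(T^*P):=\Gamma(\wedge^kV^*)$ of vertical $k$‑forms. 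Under this identification, $\delta$ becomes the vertical de Rham differential $d_{\mathrm{vert}}$ (up to an overall sign, which I absorb into the homotopy). Indeed, for $f\in\Cinf(P)$ the Hamiltonian vector field $X_{\rho^*f}$ relative to $\omega_{B_0}$ — equivalently, relative to $\omega_{\mathrm{can}}$, since $\{\rho^*f,\cdot\}_{\sigma_0}=\{\rho^*f,\cdot\}_{\canonical}$ — is vertical and restricts on each fiber $T^*_pP$ to the constant field $\pm df|_p$, and moreover $[X_{\rho^*f},X_{\rho^*g}]=X_{\rho^*\{f,g\}}=0$ because the bracket on $P$ is trivial. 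Feeding such fields into the Cartan formula for $d_{\mathrm{vert}}$ reproduces exactly the differential \eqref{eq:CEdiff} with its second sum absent, which is precisely the formula for $\delta$ in this geometric setting.

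Next I would exhibit the homotopy for $(\Omega^\bullet_{\mathrm{vert}}(T^*P), d_{\mathrm{vert}})$ using the Liouville (Euler) vector field $\mathcal{E}$ on $T^*P$: it is vertical, globally defined (because the fibers are vector spaces), and its flow is fiberwise scaling $m_t\colon\xi\mapsto t\xi$, with $\tfrac{d}{dt}m_t^*\alpha=\tfrac1t\,m_t^*(\mathcal{L}_{\mathcal{E}}\alpha)$ for $t>0$. For a vertical $k$‑form $\alpha$ with $k\ge1$, set
\[
  h_k\alpha:=\int_0^1\frac1t\,m_t^*\bigl(i_{\mathcal{E}}\alpha\bigr)\,dt .
\]
Contraction with $\mathcal{E}_\xi=\xi$ contributes one factor of $t$ and the pullback on the remaining $k-1$ legs another $t^{k-1}$, so the integrand equals $t^{k-1}$ times a form smooth in $(\xi,t)$ up to $t=0$; hence $h_k\alpha$ is a well‑defined smooth vertical $(k-1)$‑form for $k\ge1$. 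Applying Cartan's formula $\mathcal{L}_{\mathcal{E}}=d_{\mathrm{vert}}i_{\mathcal{E}}+i_{\mathcal{E}}d_{\mathrm{vert}}$ leafwise and using $\lim_{t\to0^+}m_t^*\alpha=0$ for $k\ge1$ (since $m_0$ collapses fibers and $dm_0$ annihilates $V$), integrating in $t$ from $0$ to $1$ yields $d_{\mathrm{vert}}h_k+h_{k+1}d_{\mathrm{vert}}=\id$ on $\Omega^k_{\mathrm{vert}}(T^*P)$ for all $k\ge1$. Finally $h_k$ is $\Cinf(P)$‑linear for the module structure via $\rho^*$: $i_{\mathcal{E}}\bigl((\rho^*g)\alpha\bigr)=(\rho^*g)\,i_{\mathcal{E}}\alpha$ and $m_t^*\rho^*g=\rho^*g$ since $\rho\circ m_t=\rho$, whence $h_k\bigl((\rho^*g)\alpha\bigr)=(\rho^*g)\,h_k\alpha$. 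Transporting $h$ back along the identification $\CCEder^\bullet(P,T^*P)\cong\Omega^\bullet_{\mathrm{vert}}(T^*P)$ produces the required $\Cinf(P)$‑linear maps satisfying $\delta h+h\delta=\id$, and therefore $\HCEder^k(P,T^*P)=\{0\}$ for $k\ge1$.

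The delicate point is bookkeeping rather than a genuine obstacle: identifying $\delta$ with $d_{\mathrm{vert}}$ requires pinning down the isomorphism $V\cong\rho^*T^*P$, the description of $X_{\rho^*f}$ as a constant vertical field on each fiber, and the tracking of an overall sign. The only analytic subtlety is the smoothness of $h_k$ at $t\to0$; this works precisely because we restrict to $k\ge1$ (so the relevant power of $t$ is $\ge0$) and because the globally defined Liouville field and fiber scalings exist thanks to the vector‑space structure of the cotangent fibers. Consistency with Lemma~\ref{tensorialityofdelta} is automatic, as $d_{\mathrm{vert}}$ is manifestly $\Cinf(P)$‑linear.
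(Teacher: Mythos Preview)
Your proof is correct and shares the paper's core strategy --- identifying $(\CCEder^\bullet(P,T^*P),\delta)$ with the vertical de Rham complex on $T^*P$ and then applying a Poincar\'e-lemma homotopy --- but the execution differs in an instructive way. The paper makes the identification only \emph{locally}, on a coordinate chart $U$, via an explicit coordinate map $\Psi$ (Lemma~\ref{Psiisacomplexmap}); it then builds a local homotopy $h_\alpha$ from the standard de Rham homotopy on $\mathbb{R}^n$ and glues these using a partition of unity, which is where the $\Cinf(P)$-linearity of $\delta$ (Lemma~\ref{tensorialityofdelta}) is needed. You instead make the identification \emph{globally} using the canonical isomorphism $V\cong\rho^*T^*P$, and produce the homotopy globally by means of the Euler vector field and fiberwise scaling --- structures that exist precisely because $T^*P\to P$ is a vector bundle. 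Your route avoids coordinates and the partition-of-unity step entirely, and the $\Cinf(P)$-linearity of $h$ falls out directly from $\rho\circ m_t=\rho$; the paper's route is more elementary and explicit but needs the additional gluing argument. Both proofs ultimately rest on the contractibility of the cotangent fibers.
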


The idea of the proof is that the chain homotopy $h$ will be shown to exist locally, and it will then be globalized using a partition of unity. We start with the local picture.

Let $U\subseteq P$ be an open subset with coordinates $(q^1,\ldots, q^n)$, and with induced coordinates $(q^1,\ldots,q^n,p_1,\ldots,p_n)$ on $T^*U=U\times \mathbb{R}^n$. We will use the multi-index notation $I_k$
for a strictly increasing set of indices $i_1<\ldots<i_k$ of length $k$, so we write $dq^{I_k}:= dq^{i_1}\wedge \ldots \wedge dq^{i_k}$ and likewise for $dp_{I_k}$, $\frac{\partial}{\partial q^{I_k}}$ and $\frac{\partial}{\partial p_{I_k}}$. Then cochains $D\in \CCEder^k(U,T^*U)$ can be uniquely written as
$$
D= \sum_{I_k} D^{I_k}\frac{\partial}{\partial q^{I_k}},
$$
for $D^{I_k}\in C^\infty(T^*U)$. On $T^*U$, denote by  $\Omega^\bullet_V(T^*U)$ the algebra of {\em vertical} differential forms, defined in degree $k$ by forms of type
$$
\eta = \sum_{I_k} \eta^{I_k} dp_{I_k}
$$
for $\eta^{I_k}\in C^\infty(T^*U)$. The {\em vertical} de Rham differential $d_{\mathrm{ver}}$ is given by the usual de Rham differential in the $p_j$ variables, so that on functions $f\in C^\infty(T^*U)$ it acts as $d_{\mathrm{ver}}f =  \sum_j \frac{\partial f}{\partial p_j}dp_j$, and this defines the vertical complex $(\Omega_V(T^*U),d_{\mathrm{ver}})$.

Consider the map $\Psi\colon \CCEder^\bullet(U,T^*U)\to \Omega^\bullet_V(T^*U)$ defined for each degree $k$ by
$$
\sum_{I_k} D^{I_k}\frac{\partial}{\partial q^{I_k}} \mapsto \sum_{I_k} D^{I_k} dp_{I_k}.
$$
Considering the natural $C^\infty(U)$-module structures on both complexes (via $\rho^*\colon C^\infty(U)\to C^\infty(T^*U)$), it is clear that $\Psi$ is an isomorphism of  $C^\infty(U)$-modules.

\begin{lemma}
    \label{Psiisacomplexmap}%
The map $\Psi$ is a cochain map: $d_{\mathrm{ver}}\circ\Psi = \Psi\circ\delta$.
\end{lemma}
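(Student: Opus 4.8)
The plan is a direct coordinate verification. Conceptually, on $T^{*}U$ the vertical cotangent bundle is canonically identified with $\rho^{*}TU$ — the fibre of $T^{*}U$ over $x$ is the vector space $T^{*}_{x}U$, whose cotangent space is $T_{x}U$, so the function $p_{i}$ restricted to a fibre is the linear functional "pair with $\partial/\partial q^{i}$" — and under this identification $\partial/\partial q^{i}$ corresponds to $dp_{i}$; the map $\Psi$ is precisely this identification written in coordinates, and the lemma asserts that it intertwines the Chevalley--Eilenberg differential with the vertical de Rham differential. To prove it, I would first record the reduction step: a cochain $D\in\CCEder^{k}(U,T^{*}U)$ is a multiderivation along $\rho^{*}$, hence a section of $\rho^{*}\wedge^{k}TU$, and as such it is determined by — and has as components — its values on coordinate functions, $D^{J_{k}}=D(q^{j_{1}},\dots,q^{j_{k}})$ for strictly increasing $J_{k}$ (the same elementary fact used at the beginning of the proof of Lemma~\ref{lem:formalVF}). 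By definition of $\Psi$, the coefficient of $dp_{J_{k}}$ in $\Psi(D)$ is exactly $D^{J_{k}}$, and since $\delta D\in\CCEder^{k+1}(U,T^{*}U)$ the same description applies to it. Hence it suffices to check, for every strictly increasing multi-index $J_{k+1}=(j_{1}<\dots<j_{k+1})$, that the coefficient of $dp_{J_{k+1}}$ in $d_{\mathrm{ver}}(\Psi D)$ equals $(\delta D)(q^{j_{1}},\dots,q^{j_{k+1}})$.

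For the right-hand side I would use that the Poisson structure on $P$ vanishes, so that the second sum in \eqref{eq:CEdiff} drops out and, as recorded just before the lemma,
\[
    (\delta D)(q^{j_{1}},\dots,q^{j_{k+1}})
    =\sum_{a=1}^{k+1}(-1)^{a-1}\big\{\rho^{*}q^{j_{a}},\,D^{\,J_{k+1}\setminus\{j_{a}\}}\big\}_{\canonical},
\]
where I used $D(q^{j_{1}},\dots,\widehat{q^{j_{a}}},\dots,q^{j_{k+1}})=D^{\,J_{k+1}\setminus\{j_{a}\}}$. With the sign conventions in force one has $\{\rho^{*}q^{i},g\}_{\canonical}=\partial g/\partial p_{i}$, so this becomes $\sum_{a}(-1)^{a-1}\partial_{p_{j_{a}}}D^{\,J_{k+1}\setminus\{j_{a}\}}$. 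For the left-hand side, $d_{\mathrm{ver}}(\Psi D)=\sum_{I_{k}}\sum_{l}(\partial_{p_{l}}D^{I_{k}})\,dp_{l}\wedge dp_{I_{k}}$; only the summands with $l\notin I_{k}$ survive, and writing $J_{k+1}=I_{k}\cup\{l\}$ with $l$ occupying slot $a$ one has $dp_{l}\wedge dp_{I_{k}}=(-1)^{a-1}dp_{J_{k+1}}$ together with $I_{k}=J_{k+1}\setminus\{j_{a}\}$ and $l=j_{a}$. Gathering the terms proportional to $dp_{J_{k+1}}$ gives $\sum_{a}(-1)^{a-1}\partial_{p_{j_{a}}}D^{\,J_{k+1}\setminus\{j_{a}\}}$, which is exactly the expression obtained for $(\delta D)(q^{j_{1}},\dots,q^{j_{k+1}})$, finishing the proof.

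The computation is essentially routine; the only point demanding attention is the bookkeeping of signs — specifically the coincidence of the reordering sign $(-1)^{a-1}$ coming from $dp_{l}\wedge dp_{I_{k}}=(-1)^{a-1}dp_{J_{k+1}}$ with the alternating sign $(-1)^{a-1}$ that the Chevalley--Eilenberg differential attaches to the $a$-th entry when evaluated on the ordered tuple $(q^{j_{1}},\dots,q^{j_{k+1}})$ — and the attendant choice, once and for all, of the sign convention relating $\{\rho^{*}q^{i},\cdot\}_{\canonical}$ to $\partial/\partial p_{i}$. No global argument enters at this stage, since $\Psi$ is defined only on the chart $U$; the globalization is handled afterwards by a partition of unity in the proof of Proposition~\ref{prop:vanishing}.
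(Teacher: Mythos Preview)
Your proposal is correct and follows essentially the same approach as the paper's proof: both are direct coordinate computations that express $\delta D$ and $d_{\mathrm{ver}}(\Psi D)$ in terms of the coefficients $D^{I_k}$ and then match the coefficient of $dp_{J_{k+1}}$ on each side, using $\{\rho^*q^i,\cdot\}_{\canonical}=\partial/\partial p_i$ and the reordering sign $dp_{l}\wedge dp_{I_k}=(-1)^{a-1}dp_{J_{k+1}}$. Your write-up is slightly more explicit about the conceptual identification and the sign bookkeeping, but the underlying argument is identical.
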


\begin{proof}
    For
    $D = \sum_{I_k}D^{I_k}\frac{\partial}{\partial q^{I_k}} \in \CCEder^k(U, T^*U)$, we have
    \begin{equation*}
        \delta D
        =
        \tilde{D}
        =
        \sum_{I_{k+1}}
        \tilde{D}^{I_{k+1}}
        \frac{\partial}{\partial q^{I_{k+1}}},
    \end{equation*}
    where
    \begin{equation*}
        \tilde{D}^{I_{k+1}}
        = \sum_{l=1}^{k+1} (-1)^{l-1} \{q^{i_l}, D(q_{i_1},\ldots,\widehat{q^{i_l}},\ldots, q^{i_{k+1}})\}_{can}  =
        \sum_{l=1}^{k+1}(-1)^{l-1}
        \frac{
          \partial
          D^{i_1 \ldots \widehat{i_l} \ldots i_{k+1}}
        }{\partial p_{i_l}}.
    \end{equation*}
    Therefore
    \begin{equation}
        \label{Psidelta}
        \Psi(\delta D)
        =
        \sum_{I_{k+1}}\left(
            \sum_{l=1}^{k+1}(-1)^{l-1}
            \frac{
              \partial
              D^{i_1 \ldots \widehat{i_l} \ldots i_{k+1}}
            }{\partial p_{i_l}}
        \right)
        dp_{I_{k+1}}.
    \end{equation}
   On the other hand,
    \begin{equation}\label{dPsi}
        d_{\mathrm{ver}}(\Psi(D))
        =
        \sum_{I_k}\left(
            \sum_{r \notin I_k}
            \frac{\partial D^{I_k}}{\partial p_r}dp_r
        \right)
        dp_{I_k}.
    \end{equation}

To prove the lemma,
it suffices to verify that, for each multi-index $I_{k+1}$, the coefficients of $dp_{I_{k+1}}$ in the expressions \eqref{Psidelta} and \eqref{dPsi} coincide. Given $I_{k+1}=(i_1, \ldots, i_{k+1})$, we use the notation $I_k^l$ for the multi-index of length $k$ given by
$(i_1, \ldots, \widehat{i_l}, \ldots, i_{k+1})$,
for $l\in \{1,\ldots,k+1\}$.

For a fixed $I_{k+1}=(i_1, \ldots, i_{k+1})$, in order to find the coefficient of $dp_{I_{k+1}}$ in the expression \eqref{dPsi} of $d_{\mathrm{ver}}(\Psi(D))$  we must collect the terms defined by pairs ($I_k$, $r\notin I_k$) satisfying the condition $I_k\cup \{r\} = I_{k+1}$, since in this case $dp_r dp_{I_k}$ agrees with $dp_{I_{k+1}}$ up to a sign.
But such pairs ($I_k$, $r$) can be equivalently written as $(I_k^l,i_l)$, for $l=1,\ldots,k+1$. Since
$$
\sum_{l=1}^{k+1}\frac{\partial D^{I_k^l}}{\partial p_{i_l}}dp_{i_l}dp_{I_k} = \sum_{l=1}^{k+1} (-1)^{l-1} \frac{\partial D^{I_k^l}}{\partial p_{i_l}}dp_{I_{k+1}},
$$
we see that the coefficients of $dp_{I_{k+1}}$ in \eqref{Psidelta} and \eqref{dPsi} agree, and the result follows.
\end{proof}

We now proceed to the main proof.

\begin{proof}(of Proposition~\ref{prop:vanishing})
The first step is observing that the proposition holds locally.
To see that, let $h_{\mathrm{dR}}\colon \Omega^\bullet(\mathbb{R}^n)\to \Omega^{\bullet-1}(\mathbb{R}^n)$ be the usual de Rham homotopy operator on $\mathbb{R}^n$ (see e.g. \cite{warner2013foundations}), which satisfies $d h_{\mathrm{dR}} + h_{\mathrm{dR}} d = \id$. Take a local chart $U\subseteq P$, so that  $T^*U = U \times \mathbb{R}^n$. We have an induced homotopy operator $h_{\mathrm{ver}}\colon \Omega_V^\bullet(T^*U)\to \Omega_V^{\bullet-1}(T^*U)$ by viewing vertical forms as forms on the fibers $\mathbb{R}^n=\{(p_1,\ldots,p_n)\}$ parametrized by $q=(q^1,\ldots,q^n)\in U$ and
taking $h_{\mathrm{dR}}$ fiberwise, for each fixed $q$. It follows  that $h_{\mathrm{ver}}$ is $C^\infty(U)$-linear and satisfies $d_{\mathrm{ver}}h_{\mathrm{ver}} + h_{\mathrm{ver}}d_{\mathrm{ver}}=\id$.
By Lemma~\ref{Psiisacomplexmap} we have a $C^\infty(U)$-linear isomorphism of complexes $\Psi\colon (\CCEder^\bullet(U,T^*U),\delta)\to (\Omega_V^\bullet(T^*U),d_{\mathrm{ver}})$, which we use to turn $h_\mathrm{ver}$ into a $C^\infty(U)$-linear homotopy operator $h\colon \CCEder^\bullet(U,T^*U) \to \CCEder^{\bullet-1}(U,T^*U)$ as desired.

To prove the global result, consider an atlas $\{U_\alpha\}$ of $P$, and let $\{\varphi_\alpha\}$ be a (locally finite) partition of unity subordinate to it. For each $\alpha$, we have a $C^\infty(U_\alpha)$-linear homotopy operator
$$
h_\alpha\colon \CCEder^\bullet(U_\alpha,T^*U_\alpha) \to \CCEder^{\bullet-1}(U_\alpha,T^*U_\alpha)
$$
satisfying $\delta_\alpha h_\alpha +  h_\alpha \delta_\alpha = \id$. Since the differential
$\delta_\alpha$ is just the restriction of $\delta$ (in the sense that $\delta_\alpha (D|_{U_\alpha})=(\delta D) |_{U_\alpha}$), for each $\alpha$ we have
$$
    \varphi_\alpha \id  = \varphi_\alpha (\delta_\alpha h_\alpha +  h_\alpha \delta_\alpha) = \delta (\varphi_\alpha h_\alpha) + (\varphi_\alpha h_\alpha)\delta
$$
where we have used the $C^\infty(U_\alpha)$-linearity of $\delta_\alpha$ (Lemma~\ref{tensorialityofdelta}).

Now let $h:=\sum_\alpha \varphi_\alpha h_\alpha$ (which is well
    defined since the partition of
    unity is locally finite). It is clear that it is $C^\infty(P)$-linear and $\delta h + h\delta = \id$.
 \end{proof}

\subsection{The classifying action}\label{TheClassifyingMap}

We now collect various consequences of Proposition~\ref{prop:vanishing} concerning formal deformations of the Poisson map \eqref{eq:poissonmap} into a formal equivalence bimodule; we will verify that Problems~\ref{Deformationproblem} and \ref{prob:comm} can be directly solved in this geometric context.

Recall that we are considering the Poisson morphism $\phi_0=\rho^*\colon C^\infty(P)\to C^\infty(T^*P)$ defined by the natural projection $\rho\colon T^*P\to P$, where
$T^*P$ is equipped with the Poisson structure $\sigma_0$ defined by the symplectic form $\omega_{B_0} = \omega_{\mathrm{can}}+\rho^*B_0$, with $B_0$ a closed 2-form on $P$, and $P$ carries the zero Poisson structure. We have the following result.

\begin{theorem} \
    \label{DeformationofthezeroPoissonsymplecticrealization}
     Given any formal symplectic structucture $\omega=\omega_0 + \sum_{k=1}^\infty \lambda^k \omega_k$ on $T^*P$ with $\omega_0=\omega_{B_0}$, and formal Poisson structure $\pi$ on $P$ with $\pi_0=0$, there  exist
    formal vector fields $X, X' \in \lambda\mathfrak{X}(T^*P)[[\lambda]]$ and formal Poisson structure $\pi'$ on $P$, with $\pi'_0=0$,
    such that
    \begin{equation}\label{eq:diagequiv}
    (\Cinf(P)[[\lambda]], \pi)
        \stackrel{\Phi}{\longrightarrow}
        (\Cinf(T^*P)[[\lambda]], \omega)
        \stackrel{\Phi'}\longleftarrow
        (\Cinf(P)[[\lambda]], \pi')
    \end{equation}
    is an equivalence bimodule, where $\Phi = \exp(\mathcal{L}_X)\rho^*$ and $\Phi' = \exp(\mathcal{L}_{X'})\rho^*$.
    \end{theorem}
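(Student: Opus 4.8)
The plan is to apply the algebraic machinery of Section~\ref{sec:defmorphism} to the Poisson map \eqref{eq:poissonmap}, taking $\alg{A}=\Cinf(P)$ with the zero bracket, $\alg{B}=\Cinf(T^*P)$ with the Poisson bracket $\sigma_0$ of $\omega_{B_0}$, $\phi_0=\rho^*$, and letting $\sigma$ denote the formal Poisson structure obtained by formally inverting $\omega$ (so that $\sigma_0$ is its zeroth order). The construction of \eqref{eq:diagequiv} then splits into the two halves corresponding to Problems~\ref{Deformationproblem} and \ref{prob:comm}.

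For the left leg I would first note that there is a horizontal lift along the surjective submersion $\rho\colon T^*P\to P$ in the sense of Definition~\ref{horizlift}: choosing a horizontal distribution on $T^*P\to P$ (say, from a linear connection) and using the identification $\CCEder^1(P,T^*P)=\Gamma(\rho^*TP)$ of Section~\ref{subsec:vanishing}, the resulting $\Cinf(P)$-linear splitting $\Gamma(\rho^*TP)\to\mathfrak{X}(T^*P)$ is such a lift. Since $\HCEder^2(P,T^*P)=\{0\}$ by Proposition~\ref{prop:vanishing}, Proposition~\ref{Existence} yields $X\in\lambda\mathfrak{X}(T^*P)[[\lambda]]$ such that $\Phi=\exp(\mathcal{L}_X)\rho^*\colon(\Cinf(P)[[\lambda]],\pi)\to(\Cinf(T^*P)[[\lambda]],\omega)$ is a Poisson morphism.

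For the right leg I would first compute the commutant $\alg{A}'=(\rho^*\Cinf(P))^c$ in $(\Cinf(T^*P),\sigma_0)$. Since $\{\rho^*f,\cdot\}_{\sigma_0}=\{\rho^*f,\cdot\}_{\canonical}$ (as used in Section~\ref{subsec:vanishing}) and the latter is the vertical vector field $\sum_i(\partial f/\partial q^i)\,\partial/\partial p_i$, a function in $\alg{A}'$ has vanishing vertical derivative, hence is a pullback; thus $\alg{A}'=\rho^*\Cinf(P)$ with the induced (zero) bracket, $\phi_0'$ is the inclusion $\rho^*\Cinf(P)\hookrightarrow\Cinf(T^*P)$, and the $\alg{A}'$-module structure on $\CCEder^\bullet(P,T^*P)$ coincides with the $\Cinf(P)$-module structure via $\rho^*$. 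Consequently, condition (1) of Proposition~\ref{prop:existprobcomm} is met by $\varphi_1:=h_1$, the degree-one component of the $\Cinf(P)$-linear chain homotopy $h$ of Proposition~\ref{prop:vanishing} (so $\delta D=0$ forces $D=\delta h_1 D$), while condition (2) is once more a horizontal lift along $\rho$. Proposition~\ref{prop:existprobcomm} then produces $X'\in\lambda\mathfrak{X}(T^*P)[[\lambda]]$ with $\Phi':=\exp(\mathcal{L}_{X'})\rho^*$ mapping $\Cinf(P)[[\lambda]]$ into the commutant of $\Phi(\Cinf(P)[[\lambda]])$ relative to $\sigma$ (here one uses $\phi_0'\circ\rho^*=\rho^*$ to rewrite $\exp(X')\phi_0'$ as $\exp(\mathcal{L}_{X'})\rho^*$), and Proposition~\ref{Deformationofthecommutant} equips $\Cinf(P)[[\lambda]]$ with a formal Poisson structure turning $\Phi'$ into a Poisson morphism whose image Poisson commutes with that of $\Phi$ in $(\Cinf(T^*P)[[\lambda]],\sigma)$; taking $\pi'$ to be its negative makes $\Phi'$ anti-Poisson, and $\pi'_0=0$ since the bracket induced on $\rho^*\Cinf(P)$ from $\sigma_0$ vanishes. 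Finally, setting $\lambda=0$ in \eqref{eq:diagequiv} gives $(P,0)\xleftarrow{\rho}(T^*P,\omega_{B_0})\xrightarrow{\rho}(P,0)$, the canonical self-equivalence bimodule of the trivial Poisson structure from \cite{BursztynWeinstein-Picard,BursztynFernandes-Picard}; hence \eqref{eq:diagequiv} is a formal equivalence bimodule by Definition~\ref{definition:FormalPoissonME}.

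The substance of the argument is just verifying the hypotheses of Propositions~\ref{Existence} and \ref{prop:existprobcomm} in this concrete geometric setting; the horizontal lifts along $\rho$ are routine, so the step I expect to require the most care is the identification $\alg{A}'=\rho^*\Cinf(P)$, since this is precisely what makes the $\Cinf(P)$-linearity of the chain homotopy $h$ of Proposition~\ref{prop:vanishing} coincide with the $\alg{A}'$-linearity demanded of $\varphi_1$ in condition (1)—without it the homotopy would not directly supply the required vanishing-with-primitive property.
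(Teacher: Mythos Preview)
Your proposal is correct and follows essentially the same approach as the paper: apply Proposition~\ref{Existence} (using a horizontal lift along $\rho$ and the vanishing of $\HCEder^2(P,T^*P)$ from Proposition~\ref{prop:vanishing}) to obtain $\Phi$, identify the commutant $\alg{A}'$ with $\Cinf(P)$ via $\rho^*$, then apply Proposition~\ref{prop:existprobcomm} (with $\varphi_1=h_1$ from the $\Cinf(P)$-linear homotopy and another horizontal lift) and Proposition~\ref{Deformationofthecommutant} to obtain $\Phi'$ and $\pi'$. Your explicit verification that $\alg{A}'=\rho^*\Cinf(P)$ and that the zeroth-order diagram is the canonical self-equivalence bimodule are details the paper leaves implicit, but the structure of the argument is the same.
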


\begin{proof}
The choice of any horizontal distribution on $T^*P$ (i.e., complementary to the distribution tangent to $\rho$-fibers) determines a horizontal lift operation $\Gamma(\rho^*TP)\to \mathfrak{X}(T^*P)$. (Note that it extends to a $C^\infty(P)$-linear map
$$
\CCEder^\bullet(P,T^*P)=\Gamma(\rho^*\wedge^\bullet TP) \to \mathfrak{X}^\bullet(T^*P) = \CCEder^\bullet(T^*P,T^*P)
$$
so it is a horizontal lift in the sense of Definition~\ref{horizlift}.)
Since $\HCEder^2(P,T^*P)$  vanishes by Proposition~\ref{prop:vanishing}, Proposition \ref{Existence} implies the existence of a formal vector field $X$ such that
$$
\Phi= \exp(\mathcal{L}_X)\rho^*\colon (\Cinf(P)[[\lambda]], \pi) \to (\Cinf(T^*P)[[\lambda]], \omega)
$$ is a Poisson morphism.
Let us fix a choice of $X$ and look at the other leg of the diagram by considering commutators.

The Poisson commutator of $\rho^*C^\infty(P)$ in $C^\infty(T^*P)$, with Poisson structure defined by $\omega_{B_0}$, agrees with itself. So, in the notation of Section~\ref{subsec:commutants}, we have $\alg{A}' = C^\infty(P)$, $\phi'_0=\rho^*$ and $\pi'_0=0$. Let us consider conditions (1) and (2) used in Proposition~\ref{prop:existprobcomm}. In the present case, condition (2)  simply becomes the existence of a horizontal lift $\Gamma(\rho^*TP)\to \mathfrak{X}(T^*P)$, as before. Condition (1), in turn, holds by Proposition~\ref{prop:vanishing}.
As a consequence, Proposition~\ref{prop:existprobcomm} implies the existence of a formal vector field $X'$ such that the image of $\Phi'= \exp(\mathcal{L}_{X'})\rho^*$ Poisson commutes with the image of $\Phi$ with respect to $\omega$. Moreover, by Proposition~\ref{Deformationofthecommutant}, the choice of $X'$ uniquely determines a formal Poisson structure $\pi'$ on $P$ so that $\Phi'\colon (\Cinf(P)[[\lambda]], \pi') \to
        (\Cinf(T^*P)[[\lambda]], \omega)$ is Poisson (or anti-Poisson, with a sign change). This completes the proof.
\end{proof}

The results in Section~\ref{sec:defmorphism} also describe the sense in which $X$, $X'$ and $\pi'$ in the previous theorem are unique.
If  $\overline{X}$ is another formal vector field such that
$\exp(\mathcal{L}_{\overline{X}})\rho^*\colon (\Cinf(P)[[\lambda]], \pi) \to (\Cinf(T^*P)[[\lambda]], \omega)$ is a Poisson morphism, then it must satisfy
$$\exp(\mathcal{L}_{\overline{X}}) =
    \exp(\mathcal{L}_{X_H})\exp(\mathcal{L}_X)\exp(\mathcal{L}_V),
    $$
    with $X_H$ a formal hamiltonian vector field with respect to $\omega$ and $V\in \lambda \mathfrak{X}(T^*P)[[\lambda]]$ vertical (i.e., $V\circ\rho^* = 0$). This follows from Propositions~\ref{Uniqueness} and \ref{prop:vanishing}. On the other hand, another formal vector field $\overline{X}'$ such that $\exp(\mathcal{L}_{X'})\rho^*$ has image in the commutator of $\Phi$ must satisfy
    $$\exp(\mathcal{L}_{\overline{X}'})\rho^* = (\exp(\mathcal{L}_{X'})\rho^*)\circ \exp(Y),
    $$ for some $Y\in \lambda \mathfrak{X}(P)[[\lambda]]$, by Proposition~\ref{prop:unique}. A given $X'$ determines a unique formal Poisson structure $\pi'$ on $P$ for which $\Phi'\colon  (\Cinf(P)[[\lambda]],\pi') \to (\Cinf(T^*P)[[\lambda]],\omega)$ is a Poisson map by Proposition~\ref{Deformationofthecommutant}, and its equivalence is independent of the choice of $X'$, as shown in Proposition~\ref{prop:unique}.

 As seen in Section~\ref{subsec:uniquenessdiagram}, see \eqref{eq:algclassmap}, the result in Theorem~\ref{DeformationofthezeroPoissonsymplecticrealization} and its uniqueness properties lead to a natural map
 \begin{equation*}
\FPois_{\sigma_{B_0}}(T^*P) \times \FPois_0(P) \to \FPois_0(P),
 \end{equation*}
 where $\sigma_{B_0}$ is the Poisson structure associated with $\omega_{B_0}$. Since this map depends on an initial choice
  of symplectic form $\omega_{B_0}=\omega_{\mathrm{can}}+\rho^*B_0$ on $T^*P$, we denote it by $\gamma_{B_0}$ to make the dependence on $B_0$ explicit.
 By means of the identification $\FPois_{\sigma_{B_0}}(T^*P)=\lambda \HdR(T^*P)[[\lambda]]$ from Remark~\ref{rem:symplecticdeformations} and the isomorphism $\HdR^2(P)\cong \HdR^2(T^*P)$ given by pullback by $\rho$, we write
 \begin{equation} \label{eq:classmap1}
\gamma_{B_0}\colon \lambda \HdR^2(P)[[\lambda]] \times \FPois_0(P) \to \FPois_0(P).
 \end{equation}
Concretely, this map is described as follows: given representatives $\sum_{k=1}^\infty\lambda^k B_k$ and $\pi$ of classes in $\lambda \HdR^2(P)[[\lambda]]$ and $\FPois_0(P)$, the resulting class in $\FPois_0(P)$ is defined by any  formal Poisson structure $\pi'$ fitting into an equivalence bimodule \eqref{eq:diagequiv}, with
$$
 \omega = \omega_{B_0} + \rho^*\Big(\sum_{k=1}^\infty\lambda^k B_k \Big) = \omega_{\mathrm{can}} + \rho^*B_0 + \rho^*\Big(\sum_{k=1}^\infty\lambda^k B_k \Big).
$$
As we remarked,  the closed 2-form $B_0$ is fixed in this construction (while $B_k$, $k\geq 1$, are only considered up to exact forms).
But we have the following observation.

\begin{lemma}
    The map $\gamma_{B_0}$ only depends on the cohomology class of $B_0$.
\end{lemma}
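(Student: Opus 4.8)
The plan is to reduce the claim to the uniqueness statements of Sections~\ref{sec:defmorphism} and~\ref{TheClassifyingMap}, by transporting equivalence bimodules along a symplectomorphism of $T^*P$ lying over the identity of $P$. So let $B_0$ and $B_0'$ be cohomologous closed $2$-forms on $P$, and write $B_0-B_0'=d\beta$ for some $\beta\in\Omega^1(P)$. First I would recall that the fiberwise translation $T\colon T^*P\to T^*P$, $T(\xi)=\xi+\beta_{\rho(\xi)}$, is a diffeomorphism over the identity, so that $\rho\circ T=\rho$ (hence $T^*\rho^*=\rho^*$), and that it changes the canonical symplectic form by a form pulled back from $P$: $T^*\omega_{\mathrm{can}}=\omega_{\mathrm{can}}+\rho^*d\beta$ (the sign convention for $\omega_{\mathrm{can}}$ only affects the sign of $\beta$). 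Therefore $T^*\omega_{B_0'}=\omega_{\mathrm{can}}+\rho^*(d\beta+B_0')=\omega_{\mathrm{can}}+\rho^*B_0=\omega_{B_0}$, and, more generally, for any formal series $\eta=\sum_{k\ge1}\lambda^kB_k$ of closed $2$-forms on $P$, the identity $\rho\circ T=\rho$ gives $T^*(\omega_{B_0'}+\rho^*\eta)=\omega_{B_0}+\rho^*\eta$. Consequently $T^*$ is a Poisson isomorphism $(\Cinf(T^*P)[[\lambda]],\omega_{B_0'}+\rho^*\eta)\to(\Cinf(T^*P)[[\lambda]],\omega_{B_0}+\rho^*\eta)$, and at $\lambda=0$ it carries the self Morita equivalence $(P,0)\stackrel{\rho}{\longleftarrow}(T^*P,\omega_{B_0'})\stackrel{\rho}{\longrightarrow}(P,0)$ to $(P,0)\stackrel{\rho}{\longleftarrow}(T^*P,\omega_{B_0})\stackrel{\rho}{\longrightarrow}(P,0)$.

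Next I would fix representatives $\eta=\sum_{k\ge1}\lambda^kB_k$ and $\pi$ of classes in $\lambda\HdR^2(P)[[\lambda]]$ and in $\FPois_0(P)$. By the definition of $\gamma_{B_0'}$ (Theorem~\ref{DeformationofthezeroPoissonsymplecticrealization} together with~\eqref{eq:classmap1}), the value $\gamma_{B_0'}([\eta],[\pi])$ is represented by the Poisson structure $\pi'$ that occurs as the right leg of some equivalence bimodule
\[
(\Cinf(P)[[\lambda]],\pi)\stackrel{\exp(\mathcal{L}_X)\rho^*}{\longrightarrow}(\Cinf(T^*P)[[\lambda]],\omega_{B_0'}+\rho^*\eta)\stackrel{\exp(\mathcal{L}_{X'})\rho^*}{\longleftarrow}(\Cinf(P)[[\lambda]],\pi'),
\]
with $X,X'\in\lambda\mathfrak{X}(T^*P)[[\lambda]]$. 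Applying the Poisson isomorphism $T^*$ and using $T^*\rho^*=\rho^*$ — so that $T^*\exp(\mathcal{L}_X)\rho^*$ is again of the form $\exp(\mathcal{L}_{Z})\rho^*$ for a formal vector field $Z\in\lambda\mathfrak{X}(T^*P)[[\lambda]]$, and similarly for $X'$ — this diagram is carried to the equivalence bimodule
\[
(\Cinf(P)[[\lambda]],\pi)\stackrel{\exp(\mathcal{L}_{Z})\rho^*}{\longrightarrow}(\Cinf(T^*P)[[\lambda]],\omega_{B_0}+\rho^*\eta)\stackrel{\exp(\mathcal{L}_{Z'})\rho^*}{\longleftarrow}(\Cinf(P)[[\lambda]],\pi'),
\]
which is of exactly the form~\eqref{eq:diagequiv} entering the construction of $\gamma_{B_0}$ (its zeroth-order diagram being the self Morita equivalence of $(P,0)$ with symplectic form $\omega_{B_0}$, as noted above). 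Since the class in $\FPois_0(P)$ produced by this construction is independent of the chosen bimodule (Section~\ref{subsec:uniquenessdiagram}, via Proposition~\ref{prop:unique}), we conclude $\gamma_{B_0}([\eta],[\pi])=[\pi']=\gamma_{B_0'}([\eta],[\pi])$. As $[\eta]$ and $[\pi]$ were arbitrary, $\gamma_{B_0}=\gamma_{B_0'}$.

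The only genuinely new input is the observation in the first paragraph: fiberwise translation realizes the change of $B_0$ within its de Rham class while covering the identity of $P$. Everything else is bookkeeping, though two points deserve care. First, the transported diagram must have both legs literally of the form $\exp(\mathcal{L}_\bullet)\rho^*$ to be admissible in the construction of $\gamma_{B_0}$; this is exactly what $\rho\circ T=\rho$ guarantees, together with the conjugation rule for $\exp(\mathcal{L}_\bullet)$. Second, one must check that the pulled-back diagram is still a formal equivalence bimodule, i.e.\ that its zeroth-order diagram still satisfies Definition~\ref{def:EBim}; this is immediate, since a symplectomorphism over $\mathrm{id}_P$ preserves surjective submersions, completeness, connectedness and simple-connectedness of the fibers, and symplectic orthogonality of their tangent distributions. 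I do not expect any serious obstacle beyond these verifications.
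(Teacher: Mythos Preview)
Your proof is correct and follows the same approach as the paper: both use fiber translation by a primitive $\beta$ of $B_0-B_0'$ as a symplectomorphism of $T^*P$ covering $\id_P$ to transport an equivalence bimodule for $\gamma_{B_0'}$ to one for $\gamma_{B_0}$. Your write-up is more detailed (explicitly checking that the transported legs are again of the form $\exp(\mathcal{L}_\bullet)\rho^*$ and that the zeroth-order diagram remains an equivalence bimodule), but the key idea is identical.
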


\begin{proof}
If $B_0-B_0' = d\theta$ for $\theta\in \Omega^1(P)$, then fiber-translation by $\theta$ defines a symplectomorphism $\psi_\theta\colon (T^*P,\omega_{B_0})\to (T^*P,\omega_{B_0'})$ such that $\rho\circ \psi_\theta = \rho$.  So for any closed $b\in \lambda\Omega^2(P)[[\lambda]]$, we have
$$
\psi_\theta^*(\omega_{B_0'}+ \rho^*b)= \omega_{B_0}+ \rho^*b.
$$
It directly follows that, if $\pi$ and $\pi'$ fit into an equivalence bimodule \eqref{eq:diagequiv} with
$\omega = \omega_{B_0}+ \rho^*b$,  then they also fit into an equivalence bimodule with formal symplectic form $\omega_{B_0'}+ \rho^*b$ on $T^*P$, showing that $\gamma_{B_0}([b],[\pi])=\gamma_{B_0'}([b],[\pi])=[\pi']$.
\end{proof}

As a consequence of the previous lemma, we see that the map \eqref{eq:classmap1} gives rise to a well-defined map
\begin{equation}\label{eq:classaction}
\gamma\colon \HdR^2(P)[[\lambda]]\times \FPois_0(P)\to \FPois_0(P), \;\;\;\; \gamma([B],[\pi])=\gamma_{B_0}([b],[\pi]),
\end{equation}
 where $B=\sum_{k=0}^\infty \lambda^k B_k$ and $b= \sum_{k=1}^\infty \lambda^k B_k$. It is also evident from the way this map is defined that if $\pi^B$ is a representative of the class $\gamma([B],[\pi])$, then $\pi^B$ and $\pi$ are Morita equivalent.

 It turns out that $\gamma$ actually defines an {\em action}
of the abelian group $\HdR(P)[[\lambda]]$ on the set $\FPois_0(P)$ (this can be verified as a consequence of Theorem~\ref{mainresult} below). We call $\gamma$ the {\bf classifying action}, since its orbits classify Morita equivalent formal Poisson structures in
$\FPois_0(P)$, in the sense described by the next result.
Recall that the natural action of the group of diffeomorphisms $\Diff(P)$ on  formal Poisson structures on $P$ (by pushforward at each order) descends to an action of
$\Diff(P)$ on $\FPois_0(P)$, $\psi_*([\pi])=[\psi_*\pi]$.

\begin{theorem}
    \label{CharacterizationofMEFPSviadeformation}%
Two formal Poisson structures $\pi$   and $\pi'$ on $P$, with $\pi_0=\pi'_0=0$, are Morita equivalent if and only if their classes in $\FPois_0(P)$ satisfy
    $[\pi'] = \psi_* \gamma([B],[\pi])$ for some $[B] \in \HdR^2(P)[[\lambda]]$
    and $\psi\in\Diff(P)$.
\end{theorem}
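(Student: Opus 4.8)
The plan is to establish the two implications of Theorem~\ref{CharacterizationofMEFPSviadeformation} separately; the ``if'' direction amounts to modifying a known equivalence bimodule, while the ``only if'' direction carries the real work. For ``if'', assume $[\pi']=\psi_*\gamma([B],[\pi])$ with $B=\sum_{k\ge 0}\lambda^k B_k$, $\psi\in\Diff(P)$, and set $b=\sum_{k\ge 1}\lambda^k B_k$, so $\gamma([B],[\pi])=\gamma_{B_0}([b],[\pi])$ by \eqref{eq:classaction}. Let $\pi^B$ represent this class; by construction (Theorem~\ref{DeformationofthezeroPoissonsymplecticrealization} and the discussion after \eqref{eq:classmap1}) it fits into an equivalence bimodule $(\Cinf(P)[[\lambda]],\pi)\xrightarrow{\Phi}(\Cinf(T^*P)[[\lambda]],\omega_{\mathrm{can}}+\rho^*B)\xleftarrow{\Phi'}(\Cinf(P)[[\lambda]],\pi^B)$. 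Since $[\pi']=\psi_*[\pi^B]$, there is a Poisson isomorphism $\Xi\colon(\Cinf(P)[[\lambda]],\pi')\to(\Cinf(P)[[\lambda]],\pi^B)$ whose zeroth order term is $\psi^*$ (compose $\psi^*$ with the pullback of the formal diffeomorphism implementing $\pi'=\exp(\mathcal{L}_X)\psi_*\pi^B$). Replacing the right leg $\Phi'$ by $\Phi'\circ\Xi$ does not change the image, so Poisson commutation with the image of $\Phi$ is preserved and $\Phi'\circ\Xi$ is still anti-Poisson, while its zeroth order term $(\psi\circ\rho)^*$ is a surjective submersion with the same (cotangent) fibers as $\rho$, so the underlying classical diagram is again an equivalence bimodule; hence $\pi$ and $\pi'$ are Morita equivalent.

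For ``only if'', start from an equivalence bimodule $(\Cinf(P)[[\lambda]],\pi)\xrightarrow{\Phi^\1}(\Cinf(S)[[\lambda]],\omega)\xleftarrow{\Phi^\2}(\Cinf(P)[[\lambda]],\pi')$ and bring it to the normal form underlying the definition of $\gamma$. Its zeroth order diagram is a self-Morita equivalence of $(P,0)$, so by \cite{BursztynFernandes-Picard,BursztynWeinstein-Picard} there is a symplectomorphism identifying $(S,\omega_0)$ with $(T^*P,\omega_{B_0})$, $\omega_{B_0}=\omega_{\mathrm{can}}+\rho^*B_0$, and carrying $J_1$ to $\rho$; since $\rho$-fibers are Lagrangian in $\omega_{B_0}$, the equivalence-bimodule condition forces $\ker dJ_2=\ker dJ_1$, and connectedness of the fibers then gives $J_2=\psi\circ\rho$ for some $\psi\in\Diff(P)$. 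Transporting the formal bimodule through this symplectomorphism and precomposing the right leg with $(\psi^{-1})^*$ — that is, replacing $\pi'$ by $\tilde\pi':=(\psi^{-1})_*\pi'$ — yields an equivalence bimodule over $T^*P$ whose formal symplectic form $\hat\omega$ has zeroth order $\omega_{B_0}$ and both of whose legs equal $\rho^*$ in zeroth order; by Lemma~\ref{lem:formalVF} the legs are then of the form $\exp(\mathcal{L}_X)\rho^*$ and $\exp(\mathcal{L}_{X'})\rho^*$.

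It remains to recognize $\tilde\pi'$ as a representative of $\gamma([B],[\pi])$. The class $[\hat\omega]-[\omega_{B_0}]$ equals $\rho^*[b]$ for a unique $[b]=\sum_{k\ge 1}\lambda^k[B_k]$ under $\rho^*\colon\HdR^2(P)[[\lambda]]\xrightarrow{\sim}\HdR^2(T^*P)[[\lambda]]$, and by Remark~\ref{rem:symplecticdeformations} the cohomologous formal symplectic forms $\hat\omega$ and $\omega_{\mathrm{can}}+\rho^*B$ (with $B=\sum_{k\ge 0}\lambda^k B_k$) differ by a formal diffeomorphism $\exp(\mathcal{L}_Z)$ of $T^*P$ with $Z\in\lambda\mathfrak{X}(T^*P)[[\lambda]]$. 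Applying $\exp(\mathcal{L}_Z)$ via the change-of-$S$ move \eqref{eq:changeS} and absorbing it into the leg vector fields with the Baker--Campbell--Hausdorff formula turns our bimodule into one of the exact form \eqref{eq:diagequiv} with symplectic form $\omega_{B_0}+\rho^*b$ and right-hand Poisson structure $\tilde\pi'$; by the definition of $\gamma_{B_0}$ via \eqref{eq:algclassmap} and its independence of all choices (Propositions~\ref{prop:unique} and \ref{prop:inducedmap}) this forces $[\tilde\pi']=\gamma_{B_0}([b],[\pi])=\gamma([B],[\pi])$, whence $[\pi']=\psi_*[\tilde\pi']=\psi_*\gamma([B],[\pi])$, as desired.

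The main obstacle I anticipate is not a single computation but the reduction to normal form in the ``only if'' direction: pinning down $J_2=\psi\circ\rho$ from the Lagrangian-fiber structure, and straightening the formal symplectic form to $\omega_{\mathrm{can}}+\rho^*B$ via Lecomte's theorem, so that the resulting data is precisely what \eqref{eq:diagequiv} and \eqref{eq:algclassmap} compute. One must also verify that each modification of the bimodule — transport by a symplectomorphism, precomposition of a leg by a Poisson isomorphism, change of $S$ by a formal diffeomorphism — preserves being an equivalence bimodule; this holds because the defining conditions are the classical conditions on the zeroth order diagram together with the formal conditions of being (anti-)Poisson morphisms with Poisson-commuting images, all manifestly stable under these operations, and because the insensitivity of the class $\gamma_{B_0}([b],[\pi])$ to these straightenings is exactly what the uniqueness results of Section~\ref{sec:defmorphism} provide.
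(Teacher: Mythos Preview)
Your proposal is correct and follows essentially the same route as the paper: for the ``if'' direction you exhibit the equivalence bimodule coming from the definition of $\gamma$ and modify its right leg by the Poisson isomorphism to $\pi'$, and for ``only if'' you invoke the classification of self-equivalences of $(P,0)$ to put the zeroth-order diagram in the form $\rho,\psi\circ\rho$ on $(T^*P,\omega_{B_0})$, straighten $\omega$ to $\omega_{\mathrm{can}}+\rho^*B$ via Remark~\ref{rem:symplecticdeformations}, and then read off $[\tilde\pi']=\gamma([B],[\pi])$ from the uniqueness properties of the construction. The only visible difference is that you supply your own argument for $J_2=\psi\circ\rho$ (via Lagrangianity of the $\rho$-fibers and the orthogonality condition) where the paper simply cites \cite{BursztynWeinstein-Picard,BursztynFernandes-Picard}; otherwise the order of the normalizing steps is slightly permuted but the logic is identical.
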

\begin{proof}
If $\pi^B$ is a representative of $\gamma([B],[\pi])$, then
it is Morita equivalent to $\pi$. Since a representative $\pi'$ of $\psi_* \gamma([B],[\pi])$ is Poisson isomorphic to $\pi^B$,  it is also Morita equivalent to $\pi$.


For the converse, suppose that $\pi$ and $\pi'$ are Morita
equivalent by means of an equivalence bimodule
    \begin{equation*}
        (\Cinf(P)[[\lambda]], \pi)
        {\longrightarrow}
        (\Cinf(S)[[\lambda]], \omega)
        {\longleftarrow}
        (\Cinf(P)[[\lambda]], \pi'),
    \end{equation*}
with underlying zeroth order equivalence bimodule
$(P, 0) \stackrel{J_1}{\longleftarrow} (S, \omega_0)
\stackrel{J_2}{\longrightarrow} (P, 0)$.
As shown in \cite[Sec.~6.2]{BursztynWeinstein-Picard} (see also \cite{BursztynFernandes-Picard}), we have identifications
$S = T^*P$,
$\omega_0 = \omega_\canonical + \rho^*B_0$, for some closed
$B_0 \in \Omega^2(P)$, $J_1 = \rho$, and $J_2 = \psi\circ \rho$,
for some $\psi \in \Diff(P)$. Since each $\omega_k$, $k\geq 1$, is closed, it is cohomologous to $\rho^*B_k$, for a closed $B_k\in \Omega^2(P)$, and hence $\omega$ is cohomologous to a formal symplectic form
$$\omega_B = \omega_0 + \rho^*\Big(\sum_{k=1}^\infty\lambda^k B_k\Big)= \omega_\canonical+\rho^*B,$$
with $B=B_0+ \sum_{k=1}^\infty\lambda^k B_k\in\Omega^2(P)[[\lambda]]$ closed. It follows (c.f. Remark~\ref{rem:symplecticdeformations}) that there exists $Z\in\lambda\mathfrak{X}(P)[[\lambda]]$ such that
\begin{equation*}
        \exp(\mathcal{L}_Z)\colon
        (\Cinf(T^*P)[[\lambda]], \omega)
        \to
        (\Cinf(T^*P)[[\lambda]], \omega_B)
\end{equation*}
preserves Poisson brackets. As a result, $\pi$ and $\pi'$ fit into an equivalence bimodule of the form
$$(\Cinf(P)[[\lambda]], \pi)
        \stackrel{\Phi}{\longrightarrow}
        (\Cinf(T^*P)[[\lambda]], \omega_B)
        \stackrel{\Psi}{\longleftarrow}
        (\Cinf(P)[[\lambda]], \pi'),$$
with $\Phi_0=\rho^*$ and $\Psi_0=\rho^*\circ \psi^*$. Then
the following is also an equivalence bimodule:
$$(\Cinf(P)[[\lambda]], \pi)
        \stackrel{\Phi}{\longrightarrow}
        (\Cinf(T^*P)[[\lambda]], \omega_B)
        \stackrel{\Phi'}{\longleftarrow}
        (\Cinf(P)[[\lambda]], \psi^*\pi'),$$
for $\Phi'= \Psi \circ (\psi^{-1})^*$, but now $\Phi'_0 = \rho^*$. So this last bimodule is exactly like the one in \eqref{eq:diagequiv} (see Lemma~\ref{lem:formalVF}), showing that
$[\psi^* \pi'] = \gamma([B],[\pi])$, or $[\pi'] = \psi_* \gamma([B],[\pi])$.
\end{proof}

\section{Description of the classifying action via \emph{B}-fields}
\label{sec:B-field_classification}

\subsection{The main result}
We now prove our main result, stated in Theorem~\ref{thm:main}, relating Morita equivalence to the action of $B$-fields. The last ingredient that we need is the existence of suitable self equivalence bimodules for formal Poisson structures vanishing in zeroth order:

\begin{lemma}\label{lem:selfequiv}
Any formal Poisson structure $\pi$ on $P$, with $\pi_0=0$, admits a self equivalence bimodule of the following type:
\begin{equation*}
    (\Cinf(P)[[\lambda]], \pi)
    \xrightarrow[]{\Phi}
    (\Cinf(T^*P)[[\lambda]], \omega)
    \xleftarrow[]{\Phi'}
    (\Cinf(P)[[\lambda]], \pi),
\end{equation*}
where $\omega = \omega_\canonical + \sum_{k=1}^\infty \lambda^k d\theta_k$, $\Phi=\rho^*$ and $\Phi' = \exp(\mathcal{L}_Y)\rho^*$.
\end{lemma}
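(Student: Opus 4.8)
The plan is to invoke Theorem~\ref{DeformationofthezeroPoissonsymplecticrealization} with the initial $B$-field taken to be zero, and then argue that the resulting formal Poisson structure on the right leg of the bimodule can be arranged to equal $\pi$ itself, by the right choice of deforming symplectic form $\omega$. Concretely, I would start from the classical self-equivalence of $(P,0)$ given by $(P,0)\xleftarrow{\rho}(T^*P,\omega_{\mathrm{can}})\xrightarrow{\rho}(P,0)$, i.e. taking $B_0=0$ so that $\omega_0=\omega_{\mathrm{can}}$. Theorem~\ref{DeformationofthezeroPoissonsymplecticrealization} then says that for \emph{any} formal symplectic deformation $\omega=\omega_{\mathrm{can}}+\sum_{k\ge 1}\lambda^k\omega_k$, with $\omega_k$ closed, there exist formal vector fields $X,X'$ and a formal Poisson structure $\pi'$ on $P$ (vanishing in zeroth order) fitting into the equivalence bimodule
\begin{equation*}
    (\Cinf(P)[[\lambda]], \pi)
    \xrightarrow{\exp(\mathcal L_X)\rho^*}
    (\Cinf(T^*P)[[\lambda]], \omega)
    \xleftarrow{\exp(\mathcal L_{X'})\rho^*}
    (\Cinf(P)[[\lambda]], \pi').
\end{equation*}
In the language of Section~\ref{TheClassifyingMap}, $[\pi']=\gamma_{0}([b],[\pi])=\gamma([b],[\pi])$ where $b=\sum_{k\ge 1}\lambda^k\omega_k$ records the cohomology classes of the $\omega_k$. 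So the task reduces to: choose $b$ (equivalently, the classes $[\omega_k]\in\HdR^2(P)$) so that $\gamma([b],[\pi])=[\pi]$. The obvious candidate is $b=0$, i.e. each $\omega_k$ \emph{exact}, $\omega_k=d\theta_k$; this is exactly the shape of $\omega$ asserted in the statement.

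The key step, then, is to verify that $\gamma([0],[\pi])=[\pi]$ — that the classifying action of the zero class fixes every point. One route: when $b=0$, the formal symplectic form $\omega=\omega_{\mathrm{can}}+\sum_{k\ge1}\lambda^k d\theta_k$ is cohomologous to $\omega_{\mathrm{can}}$, so by Remark~\ref{rem:symplecticdeformations} there is $Z\in\lambda\mathfrak X(T^*P)[[\lambda]]$ with $\exp(\mathcal L_Z)$ a Poisson isomorphism $(\Cinf(T^*P)[[\lambda]],\omega)\to(\Cinf(T^*P)[[\lambda]],\omega_{\mathrm{can}})$. Conjugating the bimodule above by $\exp(\mathcal L_Z)$ (as in \eqref{eq:changeS}), we get an equivalence bimodule with middle term $(\Cinf(T^*P)[[\lambda]],\omega_{\mathrm{can}})$, left map still a deformation of $\rho^*$, and right map a deformation of $\rho^*$; its right-leg Poisson structure is $\gamma([0],[\pi])$ by the uniqueness in Section~\ref{subsec:uniquenessdiagram}. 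But now I can compute $\gamma([0],[\pi])$ directly from one explicit choice of solution: taking $\omega=\omega_{\mathrm{can}}$ exactly, the pair $\Phi=\rho^*$ (so $X=0$) is \emph{already} a Poisson morphism from $(\Cinf(P)[[\lambda]],\pi)$ — because $\rho^*$ is Poisson-flat in the sense used in Lemma~\ref{lem:BPoiss}, the image $\rho^*\Cinf(P)[[\lambda]]$ landing in the commutant — wait, one must be careful here: $\rho^*$ is a Poisson morphism from $(P,0)$ only because the source bracket is zero, and this persists for $\pi$ since $\pi^\sharp(\rho^*df)$ must be checked to be $\rho$-related to zero. This is where the real content lies, so I would instead exploit the symmetry of the construction: for $\omega=\omega_{\mathrm{can}}$, the defining diagram is symmetric under swapping the two copies of $P$ (both maps are $\rho^*$, both $B$-fields zero), which forces the right-leg structure to be the same formal Poisson structure one started with, namely $\pi$; hence $\gamma([0],[\pi])=[\pi]$.

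The main obstacle I expect is precisely this symmetry/self-consistency argument: making rigorous that "$\gamma([0],[\pi])=[\pi]$'' using nothing beyond Theorem~\ref{DeformationofthezeroPoissonsymplecticrealization}, Proposition~\ref{Deformationofthecommutant}, Proposition~\ref{prop:unique} and the uniqueness discussion of Section~\ref{subsec:uniquenessdiagram}. The cleanest formulation: since $\gamma$ is well-defined on equivalence classes and the identity bimodule $(\Cinf(P)[[\lambda]],\pi)\xrightarrow{\rho^*}(\Cinf(T^*P)[[\lambda]],\omega_{\mathrm{can}})\xleftarrow{\rho^*}(\Cinf(P)[[\lambda]],\pi)$ is itself a valid candidate diagram of Poisson maps with Poisson-commuting images (here one does need to check $\rho^*$ is Poisson for \emph{both} $\pi$-labelled copies, which follows because $\{\rho^*f,\rho^*g\}_{\omega_{\mathrm{can}}}=0=\rho^*\{f,g\}_\pi$ is false in general — rather, one checks $\rho^*\colon(\Cinf(P)[[\lambda]],\pi)\to(\Cinf(T^*P)[[\lambda]],\omega_{\mathrm{can}})$ is Poisson iff $\pi=0$, so $\rho^*$ is \emph{not} the left map for nonzero $\pi$). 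So in fact the honest argument must run through Theorem~\ref{DeformationofthezeroPoissonsymplecticrealization} to produce a genuine $X$ with $\Phi=\exp(\mathcal L_X)\rho^*$ Poisson, and then invoke that $\gamma([0],[\pi])$ is independent of all choices and compute it in the cohomologically-trivial gauge; the conclusion $\gamma([0],[\pi])=[\pi]$ should drop out from Theorem~\ref{CharacterizationofMEFPSviadeformation} applied in the trivial direction, or be taken as part of the "action'' statement for $\gamma$ promised after \eqref{eq:classaction}. Once $\gamma([0],[\pi])=[\pi]$ is in hand, one picks representatives $\omega=\omega_{\mathrm{can}}+\sum_{k\ge1}\lambda^k d\theta_k$ and vector fields $X,X'$ from Theorem~\ref{DeformationofthezeroPoissonsymplecticrealization}; replacing $\pi'$ by $\pi$ via the Poisson isomorphism $\exp(\mathcal L_\xi)$ granted by Proposition~\ref{prop:unique}(ii) and absorbing it into $X'$ (composing $\exp(\mathcal L_{X'})\rho^*$ with $\exp(\mathcal L_\xi)$ and applying Lemma~\ref{lem:formalVF} to rewrite the result as $\exp(\mathcal L_Y)\rho^*$ for a single $Y\in\lambda\mathfrak X(T^*P)[[\lambda]]$), one also arranges $\Phi=\rho^*$ by first conjugating the whole diagram by $\exp(-\mathcal L_X)$ — which sends $\omega_{\mathrm{can}}+\sum\lambda^kd\theta_k$ to another cohomologically trivial formal symplectic form, still of the stated shape after relabelling the $\theta_k$. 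This yields exactly the bimodule in the statement.
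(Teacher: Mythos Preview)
Your proposal has a genuine circularity. The crux of your argument is the claim $\gamma([0],[\pi])=[\pi]$, but every route you sketch to establish it either fails outright (as you correctly note, $\rho^*$ is not a Poisson morphism from $(\Cinf(P)[[\lambda]],\pi)$ to $(\Cinf(T^*P)[[\lambda]],\omega_{\mathrm{can}})$ unless $\pi=0$), or appeals to facts that logically depend on the very lemma you are trying to prove. In particular, the paper states explicitly (just after \eqref{eq:classaction}) that the fact that $\gamma$ is an action ``can be verified as a consequence of Theorem~\ref{mainresult}'', and Theorem~\ref{mainresult} in turn is proved \emph{using} Lemma~\ref{lem:selfequiv}. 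Likewise, Theorem~\ref{CharacterizationofMEFPSviadeformation} only tells you that Morita equivalence classes are $\gamma$-orbits up to diffeomorphism; it does not give you the neutral-element property $\gamma([0],\cdot)=\id$. Your symmetry argument does not work either: the construction of $\pi'$ in Theorem~\ref{DeformationofthezeroPoissonsymplecticrealization} is genuinely asymmetric (fix $\pi$, solve Problem~\ref{Deformationproblem} for $\Phi$, then solve Problem~\ref{prob:comm} for $\Phi'$ and read off $\pi'$), so swapping the two legs does not force $\pi'=\pi$.

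More fundamentally, the assertion $\gamma([0],[\pi])=[\pi]$ is essentially equivalent to Lemma~\ref{lem:selfequiv} itself: unwinding the definition of $\gamma$ at $[B]=0$ says precisely that there exists an equivalence bimodule with middle term cohomologous to $\omega_{\mathrm{can}}$ and both legs carrying $\pi$ (up to equivalence). So your reduction is tautological. The paper's proof is accordingly a \emph{direct construction}, entirely independent of Theorem~\ref{DeformationofthezeroPoissonsymplecticrealization} and the classifying map: one picks a linear connection on $T^*P$, sets $Z_\xi=\mathrm{hor}_\xi(\pi^\sharp(\xi))\in\lambda\mathfrak{X}(T^*P)[[\lambda]]$, and defines $\omega=\int_0^1\exp(s\mathcal{L}_Z)\omega_{\mathrm{can}}\,ds$. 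The key input is a Dirac-geometric bimodule criterion (Lemma~\ref{lem:bimodulecriterion}): one shows that the inner derivation $\Cour{Z\oplus\theta_{\mathrm{can}},\cdot}$ of the formal Courant algebroid preserves $\rho^!\mathrm{gr}(\pi)$ (since $Z\oplus\theta_{\mathrm{can}}\in\rho^!\mathrm{gr}(\pi)$ and the latter is involutive), and integrating the resulting flow yields $\tau_{-\omega}(\rho^!\mathrm{gr}(\pi))=\exp(\mathcal{L}_Z)\rho^!\mathrm{gr}(\pi)$, which is exactly the bimodule condition with $\Phi=\rho^*$ and $\Phi'=\exp(\mathcal{L}_Z)\rho^*$. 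The stated form of $\omega$ then follows from the explicit integral formula.
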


This lemma will be a direct consequence of Proposition~\ref{prop:selfequiv}, proven in the next subsection.
We will assume it here to prove our main result.

Recall that, given a $B$-field, i.e., a formal series $B=\sum_{k=0}^\infty \lambda^k B_k \in \Omega^2(P)[[\lambda]]$ of closed 2-forms, and a formal Poisson structure $\pi$ with $\pi_0=0$, we can define a new formal Poisson structure $\tau_B(\pi)$, as explained in Section~\ref{subsec:MEBfields}, see \eqref{eq:tauB} (the condition $\pi_0=0$
makes the necessary invertibility of $\id + B_0^\flat\circ \pi^\sharp_0$ automatic). This defines an action of $\HdR^2(P)[[\lambda]]$ on $\FPois_0(P)$, see \eqref{eq:Bfieldact}.

\begin{theorem}
    \label{mainresult}%
    The classifying action $\gamma$ is given by $\gamma([B],[\pi])=[\tau_{-B}(\pi)]$.
\end{theorem}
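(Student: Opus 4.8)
The plan is to exhibit directly an equivalence bimodule of exactly the shape used to define the classifying action, but with right leg $\tau_{-B}(\pi)$, and then invoke the well-definedness of $\gamma$ to conclude. Write $B = B_0 + b$, where $b := \sum_{k\ge 1}\lambda^k B_k$ is closed, so that by construction $\gamma([B],[\pi]) = \gamma_{B_0}([b],[\pi])$, and the latter is computed from bimodules whose middle term carries the Poisson structure $\sigma_B$ of $\omega_{B_0} + \rho^* b = \omega_\canonical + \rho^* B$. First I would note that $\tau_{-B}(\pi)$ is a well-defined formal Poisson structure (the invertibility needed in \eqref{eq:tauB} holds because $\pi_0=0$) which again vanishes in zeroth order, since $(\tau_{-B}(\pi))_0^\sharp = \pi_0^\sharp(\id - B_0^\flat\pi_0^\sharp)^{-1} = 0$. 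Hence Lemma~\ref{lem:selfequiv} applies to $\tau_{-B}(\pi)$ and produces a self equivalence bimodule
\[
(\Cinf(P)[[\lambda]], \tau_{-B}(\pi)) \xrightarrow{\ \rho^*\ } (\Cinf(T^*P)[[\lambda]], \omega') \xleftarrow{\ \exp(\mathcal{L}_Y)\rho^*\ } (\Cinf(P)[[\lambda]], \tau_{-B}(\pi)),
\]
with $\omega' = \omega_\canonical + \sum_{k\ge1}\lambda^k d\theta_k$ and $Y \in \lambda\mathfrak{X}(T^*P)[[\lambda]]$.

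Next I would apply Theorem~\ref{Bfieldactionondualpairs} to this bimodule, taking the $B$-field $B$ on the left leg. The invertibility hypothesis $\id + B_0^\flat(\tau_{-B}(\pi))_0^\sharp = \id$ is automatic, and since the left-leg map is exactly $\rho^*$ (so the vector field $Z^\1$ appearing in Theorem~\ref{Bfieldactionondualpairs} is zero), the new middle term becomes $\omega' + \rho^* B$, while the new left-leg Poisson structure is $\tau_B(\tau_{-B}(\pi))$. At this point I would use the composition law for gauge transformations, $\tau_{B_1}\circ\tau_{B_2} = \tau_{B_1+B_2}$, valid whenever the relevant bundle maps are invertible (which here holds since all intermediate zeroth-order Poisson structures vanish; it follows from a one-line computation with $\pi^\sharp(\id + B^\flat\pi^\sharp)^{-1}$, cf.\ \cite{SeveraWeinstein2001,BDW-Characteristicclasses-starproducts}). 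This gives $\tau_B(\tau_{-B}(\pi)) = \tau_0(\pi) = \pi$, so we obtain an equivalence bimodule with left leg $\pi$, middle term $\omega' + \rho^* B$, and right leg $\tau_{-B}(\pi)$.

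Finally I would remove the discrepancy between $\omega' + \rho^* B$ and $\omega_\canonical + \rho^* B$: these differ by the exact form $\sum_{k\ge1}\lambda^k d\theta_k$, so by Remark~\ref{rem:symplecticdeformations} there is a formal diffeomorphism $\exp(\mathcal{L}_Z)$ of $T^*P$, with $Z \in \lambda\mathfrak{X}(T^*P)[[\lambda]]$, intertwining the associated Poisson structures. Composing both legs of the bimodule with $\exp(\mathcal{L}_Z)$ preserves the Poisson-commuting-images condition and leaves the underlying zeroth-order diagram unchanged, hence yields an equivalence bimodule with middle term $\sigma_B$ and both legs deforming $\rho^*$, which by Lemma~\ref{lem:formalVF} are of the form $\exp(\mathcal{L}_\bullet)\rho^*$. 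This is precisely a bimodule computing $\gamma_{B_0}([b],[\pi])$ in the sense of Theorem~\ref{DeformationofthezeroPoissonsymplecticrealization} and Section~\ref{TheClassifyingMap}, so the independence of that construction from all choices (Propositions~\ref{prop:unique} and \ref{prop:inducedmap}, cf.\ \eqref{eq:algclassmap}) forces $\gamma([B],[\pi]) = \gamma_{B_0}([b],[\pi]) = [\tau_{-B}(\pi)]$.

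I expect the main difficulty to be bookkeeping rather than anything conceptual: one must track carefully which leg of the bimodule the $B$-field acts on (so the left leg becomes $\pi$ and not $\tau_B(\pi)$), the sign conventions distinguishing the Poisson and anti-Poisson legs, and the fact that Theorem~\ref{Bfieldactionondualpairs} outputs a symplectic form only cohomologous to — not equal to — the one used to define $\gamma_{B_0}$. The composition law $\tau_B\circ\tau_{-B} = \id$ is the single computational ingredient and is elementary.
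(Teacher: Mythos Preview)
Your proposal is correct and follows essentially the same argument as the paper's proof: start from the self-equivalence bimodule of $\tau_{-B}(\pi)$ supplied by Lemma~\ref{lem:selfequiv}, apply Theorem~\ref{Bfieldactionondualpairs} with the $B$-field $B$ on the left leg to obtain $\pi$ there (via $\tau_B\tau_{-B}=\id$), then correct the middle symplectic form by a formal diffeomorphism to match $\omega_\canonical+\rho^*B$, and conclude by the definition of $\gamma$. The only differences are cosmetic: you spell out the decomposition $B=B_0+b$ and the composition law $\tau_B\circ\tau_{-B}=\id$ explicitly, whereas the paper leaves these implicit.
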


\begin{proof}
Take representatives $B$ and $\pi$ of classes in $\HdR^2(P)[[\lambda]]$ and $\FPois_0(P)$, consider
the formal Poisson structure $\tau_{-B}(\pi)$ and a
self-equivalence bimodule
$$
(\Cinf(P)[[\lambda]], \tau_{-B}(\pi))
    \xrightarrow[]{\rho^*}
    (\Cinf(T^*P)[[\lambda]], \omega)
    \xleftarrow[]{\Phi'}
    (\Cinf(P)[[\lambda]], \tau_{-B}(\pi)),
$$
as in Lemma~\ref{lem:selfequiv}.
By Theorem~\ref{Bfieldactionondualpairs}, the following is also an equivalence bimodule:
    \begin{equation*}
        (\Cinf(P)[[\lambda]], \pi)
        \xrightarrow[]{\rho^*}
        (\Cinf(T^*P)[[\lambda]], \omega+\rho^*B)
        \xleftarrow[]{\Phi'}
        (\Cinf(P)[[\lambda]], \tau_{-B}(\pi)).
    \end{equation*}
    By Lemma~\ref{lem:selfequiv} $\omega$ is cohomologous to $\omega_\canonical$, so $\omega +\rho^*B$ is cohomologous to $\omega_\canonical +\rho^*B$, and there is a Poisson isomorphism $\exp(\mathcal{L}_Z)\colon (\Cinf(T^*P)[[\lambda]],  \omega+\rho^*B)\to(\Cinf(T^*P)[[\lambda]],  \omega_\canonical+\rho^*B)$. We then obtain an equivalence bimodule
    \begin{equation*}
        (\Cinf(P)[[\lambda]], \pi)
        \xrightarrow[]{\widehat{\Phi}}
        (\Cinf(T^*P)[[\lambda]], \omega_\canonical +\rho^*B)
        \xleftarrow[]{\widehat{\Phi}'}
        (\Cinf(P)[[\lambda]], \tau_{-B}(\pi)),
    \end{equation*}
    where $\widehat{\Phi} = \exp(\mathcal{L}_Z)\rho^*$ and $\widehat{\Phi}' = \exp(\mathcal{L}_Z)\Phi'$. By the very definition of $\gamma$, this means that $\tau_{-B}(\pi)$ is a representative of the class $\gamma([B],[\pi])$.
  \end{proof}

In conclusion, the action \eqref{eq:Bfieldact} of $B$-fields on $\FPois_0(P)$ agrees (up to a conventional sign) with the classifying action \eqref{eq:classaction},
so Theorem~\ref{thm:main} is a direct consequence of this result together with Theorem~\ref{CharacterizationofMEFPSviadeformation}.

%
%

\subsection{Proof of Lemma~\ref{lem:selfequiv}}
It will be convenient to use a Dirac-geometric viewpoint to Poisson structures
(see e.g. \cite{Bursztyn2013-DiracManifolds,Meinrenken2018PoissonfromDiracPerspective}), suitably adapted to the formal context.

For a smooth manifold $M$, consider the bundle $\mathbb{T}M := TM\oplus T^*M$ and its space of smooth sections $\Gamma(\mathbb{T}M)=\mathfrak{X}(M)\oplus \Omega^1(M)$ equipped with the $C^\infty(M)$-bilinear pairing
$$
\langle X\oplus\xi,Y \oplus \eta\rangle := \iota_X\eta+\iota_Y\xi
$$
and the Courant-Dorfman bracket
$$
\Cour{X\oplus\alpha,Y\oplus\beta} := [X, Y]\oplus (\mathcal{L}_{X}\beta -
\iota_Yd\alpha).
$$
These structures define the canonical Courant-algebroid structure on $\mathbb{T}M$
 \cite{Courant1990,LWX1997-ManinTriples}, with anchor map given by the projection $\mathbb{T}M = TM\oplus T^*M \to TM$.

Here we will consider the same structures on $\Gamma(\mathbb{T}M)[[\lambda]] = \mathfrak{X}(M)[[\lambda]]\oplus
\Omega^1(M)[[\lambda]]$,
viewed as a $\Cinf(M)[[\lambda]]$-module.
The definitions of $\langle \cdot,\cdot \rangle$ and $\Cour{\cdot,\cdot}$ are given by the same formulas, extended by linearity in the formal parameter $\lambda$ (see \cite[Section~3.2]{BDW-Characteristicclasses-starproducts}).
These objects can be used to describe Poisson structures as in the usual case. A formal bivector field $\pi=\sum_{k=0}^\infty \lambda^k \pi_k$ defines a submodule of $\Gamma(\mathbb{T}M)[[\lambda]]$ given by its graph,
$$
\mathrm{gr}(\pi)= \{(\pi^\sharp(\alpha),\alpha)\,|\, \alpha\in \Omega^1(M)[[\lambda]]\}.
$$
The pairing $\langle \cdot,\cdot \rangle$ vanishes on $\mathrm{gr}(\pi)$, and the condition on $\pi$ being Poisson is equivalent to $\mathrm{gr}(\pi)$ being involutive for the Courant-Dorfman bracket.

\subsubsection*{Symmetries of the formal Courant structure}
We need to collect some facts about symmetries of $\Gamma(\mathbb{T}M)[[\lambda]]$, which are direct adaptations of the results e.g. in \cite[Sec.~2.1]{Gualtieri2011} (see also \cite{Shengda2009HamiltonianSymmetries}) for the standard Courant algebroid $\mathbb{T}M$.
A {\em symmetry}, or {\em automorphism}, of  $\Gamma(\mathbb{T}M)[[\lambda]]$ is a pair $(F,\phi)$, where $\phi\colon C^\infty(M)[[\lambda]]\to C^\infty(M)[[\lambda]]$ is an isomorphism of commutative algebras and
\begin{equation*}
        F\colon
        \Gamma(\mathbb{T}M)[[\lambda]]
        \rightarrow
        \Gamma(\mathbb{T}M)[[\lambda]]
    \end{equation*}
    is a $\mathbb{K}[[\lambda]]$-linear map preserving the relevant structures: for $e, e_1, e_2 \in\Gamma(\mathbb{T}M)[[\lambda]],$ and
    $g \in \Cinf(M)[[\lambda]]$, $F(ge) = \phi^{-1}(g)F(e)$,
    $\phi\langle F(e_1), F(e_2)\rangle = \langle e_1,
        e_2 \rangle$, and $\Cour{F(e_1), F(e_2)} = F(\Cour{e_1, e_2})$. A class of examples is given by formal diffeomorphisms of $M$: for $X\in \lambda \mathfrak{X}(M)[[\lambda]]$, we let
        $$
        F(Y\oplus\beta)= \exp(-\mathcal{L}_X)(Y)\oplus \exp(-\mathcal{L}_X)(\beta),\qquad  \phi=\exp(\mathcal{L}_X)
        $$
We denote this automorphism by $ \exp(-\mathcal{L}_X)$, if there is no risk of confusion. Another type of example is given by $B$-fields: for a closed $B\in \Omega^2(M)[[\lambda]]$, we take
$$
F(Y\oplus \beta) = Y\oplus (\beta + i_YB),\qquad \phi=\id.
$$
This automorphism is denoted by $\tau_B$, since its effect on formal Poisson structures agrees with gauge transformations
(in the sense that $\tau_B(\mathrm{gr}(\pi))= \mathrm{gr}(\tau_B(\pi))$). We will be interested here in automorphisms with $\phi=\exp(\mathcal{L}_X)$ for some $X\in \lambda \mathfrak{X}(M)[[\lambda]]$; analogously to \cite[Prop.~2.2]{Gualtieri2011}, such symmetries are necessarily given by compositions of formal diffeomorphisms and $B$-fields.

An {\em infinitesimal symmetry}, or {\em derivation}, of $\Gamma(\mathbb{T}M)[[\lambda]]$ is a pair $(D, X)$, where
    \begin{equation*}
        D\colon
        \Gamma(\mathbb{T}M)[[\lambda]]
        \rightarrow
        \Gamma(\mathbb{T}M)[[\lambda]]
    \end{equation*}
    is a $\mathbb{K}[[\lambda]]$-linear map and
    $X \in \mathfrak{X}(M)[[\lambda]]$ such that, for $e, e_1, e_2 \in\Gamma(\mathbb{T}M)[[\lambda]]$ and
    $g \in \Cinf(M)[[\lambda]]$, $D(g e) = gD(e) + \mathcal{L}_X(g)e$, $D\Cour{e_1,e_2} = \Cour{D(e_1),  e_2} + \Cour{e_1, D(e_2)}$, and
    $\mathcal{L}_X\langle e_1, e_2 \rangle = \langle
        D(e_1), e_2\rangle+\langle e_1, D(e_2)\rangle$. The following are two key examples: any $X \in \mathfrak{X}(M)[[\lambda]]$ defines a derivation $(D,X)$ by $D(Y\oplus\beta) = [X,Y]\oplus \mathcal{L}_X\beta$, while any closed $b\in \Omega^2(M)[[\lambda]]$ defines a derivation $(D,0)$ with $D(Y\oplus \beta)= - 0\oplus i_Yb$.

        Derivations arise as infinitesimal generators of 1-parameter subgroups of automorphisms $(F_t,\phi_t)$, $D=-\frac{d}{dt}\Big |_{t=0} F_t$, and from this perspective the infinitesimal counterparts of symmetries $(F,\phi)$ with $\phi$ a formal diffeomorphism are the derivations $(D,X)$ with symbol $X\in \lambda \mathfrak{X}(M)[[\lambda]]$ vanishing in zeroth order. Similarly to \cite[Sec.~2.1]{Gualtieri2011}, one can check that all such derivations are given by pairs $(X,b) \in \lambda \mathfrak{X}(M)[[\lambda]]\oplus \Omega^2(M)[[\lambda]]$ with $db=0$, acting by the sum of the effects of $X$ and $b$:
        \begin{equation}\label{eq:bder}
        (X,b) (Y\oplus \beta) = [X,Y]\oplus (\mathcal{L}_X\beta - i_Y b).
        \end{equation}
        The corresponding 1-parameter subgroup of automorphisms of $\Gamma(\mathbb{T}M)[[\lambda]]$ is explicitly given by
        \begin{equation}\label{eq:1param}
        F_t = \exp(-t\mathcal{L}_X)\tau_{B_t}, \;\;\; \mbox{ where }
        B_t =  \int_{0}^t\exp(s\mathcal{L}_X)bds.
        \end{equation}

\subsubsection*{The bimodule condition}

Let us consider the diagram
\begin{equation}
\label{eq:dualpairdiagram}
(\Cinf(P_1)[[\lambda]], \pi^\1)
\xrightarrow[]{\Phi^\1}
(\Cinf(S)[[\lambda]], \omega)
\xleftarrow[]{\Phi^\2}
(\Cinf(P_2)[[\lambda]], \pi^\2),
\end{equation}
where $\pi^\1$ and $\pi^\2$ are formal Poisson structures,
$\omega$ is a formal symplectic structure, and
$\Phi^\ind = \exp(Z^\ind)J_i^*$ for $Z^\ind \in \lambda\mathfrak{X}(S)[[\lambda]]$ and $J_i\colon S\to P$ a surjective submersion, for $i=1,2$.  Following \cite{Frejlich}, we will describe a convenient criterion ensuring that this diagram is a bimodule, in the sense of Section~\ref{subsec:formal}.

For a surjective submersion $J\colon S\rightarrow P$ and a formal Poisson structure $\pi$ on $P$, we will denote by
$$
J^!\mathrm{gr}(\pi) \subseteq \mathfrak{X}(S)[[\lambda]]\oplus\Omega^1(S)[[\lambda]]
$$
the $C^\infty(S)[[\lambda]]$-submodule defined by elements $X\oplus \alpha$ satisfying the following pointwise condition:
for each $z\in S$, with $q = J(z)$, there exists $\xi\in T_q^*P[[\lambda]]$ such that
\begin{equation}\label{eq:bimagedef}
(T_zJ)^*\xi = \alpha_z\;\;\;  \mbox{ and }\;\;
		T_zJ(X_z) = \pi^\sharp_{q}(\xi).
\end{equation}
Note that this is a natural adaptation to the formal context of the notion of ``backward image'' of Dirac structures, see e.g. \cite{BursztynRadko2003,Bursztyn2013-DiracManifolds}. One of its main properties, verified as in the usual context, is that $J^!\mathrm{gr}(\pi)$ is closed under the Courant-Dorfman bracket (see e.g. \cite[Prop.~5.6]{Bursztyn2013-DiracManifolds}, \cite[Prop.~2.13]{Meinrenken2018PoissonfromDiracPerspective}).
For $\Phi=\exp(\mathcal{L}_Z)J^*$, with $Z\in \lambda \mathfrak{X}(S)[[\lambda]]$, we define
$$
\Phi^!\mathrm{gr}(\pi):= \exp(\mathcal{L}_Z)J^!\mathrm{gr}(\pi).
$$

The main observation is the following (c.f. \cite{Frejlich}):

\begin{lemma}
    \label{lem:bimodulecriterion}%
    Suppose that in diagram ~\eqref{eq:dualpairdiagram} we have $\mathrm{dim}(S)=\mathrm{dim}(P_1) + \mathrm{dim}(P_2)$
    and
    \begin{equation}
    \label{eq:Diracstructuresrelation}
        \tau_{-\omega}((\Phi^\1)^!\mathrm{gr}(\pi^\1)) = (\Phi^\2)^!\mathrm{gr}(\pi^\2).
    \end{equation}
Then diagram ~\eqref{eq:dualpairdiagram} is a bimodule.
\end{lemma}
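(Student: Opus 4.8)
The plan is to verify the three conditions that make \eqref{eq:dualpairdiagram} a bimodule in the sense of Section~\ref{subsec:formal}: (a) $\Phi^\1$ is a Poisson morphism; (b) $\Phi^\2$ is an anti-Poisson morphism; (c) the images $\Phi^\1(\Cinf(P_1)[[\lambda]])$ and $\Phi^\2(\Cinf(P_2)[[\lambda]])$ Poisson-commute in $(\Cinf(S)[[\lambda]],\omega)$. Write $L_i:=(\Phi^\ind)^!\mathrm{gr}(\pi^\ind)$ and $\mathcal{V}_i:=L_i\cap TS[[\lambda]]$. The first ingredient is a Dirac-geometric dictionary for Poisson morphisms into a symplectic target: unwinding the definition of $L_i$ together with the characterization of $J_i$-related vector fields shows that, for a formal symplectic structure $\Omega$ on $S$, the map $\Phi^\ind\colon(\Cinf(P_i)[[\lambda]],\pi^\ind)\to(\Cinf(S)[[\lambda]],\Omega)$ is Poisson if and only if the Hamiltonian section $X^\Omega_{\Phi^\ind f}\oplus d(\Phi^\ind f)$ lies in $L_i$ for all $f$; since $d(\Phi^\ind f)=i_{X^\Omega_{\Phi^\ind f}}\Omega$, this section equals $\tau_\Omega(X^\Omega_{\Phi^\ind f}\oplus 0)$, so the condition becomes $X^\Omega_{\Phi^\ind f}\oplus 0\in\tau_{-\Omega}(L_i)$ for all $f$. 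I will also use two elementary facts: that $\mathcal{V}_i$ is the image of $\Gamma(\ker TJ_i)[[\lambda]]$ under the Courant automorphism covering $\exp(\mathcal{L}_{Z^\ind})$ — hence a direct summand of $\mathfrak{X}(S)[[\lambda]]$ of rank $\dim S-\dim P_i$ — and that the Hamiltonian fields $\{X^\omega_{\Phi^\ind f}\}_f$ generate its $\omega$-orthogonal $\mathcal{V}_i^{\perp_\omega}$, which has rank $\dim P_i$.

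The core of the proof is a rank count using the hypothesis $\tau_{-\omega}(L_1)=L_2$. Intersecting it with $TS[[\lambda]]$ gives $\tau_{-\omega}(L_1)\cap TS[[\lambda]]=\mathcal{V}_2$, a direct summand of rank $\dim S-\dim P_2=\dim P_1$ — here the dimension hypothesis enters. On the other hand, isotropy of $L_1$ forces $\tau_{-\omega}(L_1)\cap TS[[\lambda]]\subseteq\mathcal{V}_1^{\perp_\omega}$: if $Y\oplus 0\in\tau_{-\omega}(L_1)$, that is $Y\oplus i_Y\omega\in L_1$, then pairing with any $W\oplus 0\in\mathcal{V}_1=L_1\cap TS[[\lambda]]$ yields $\omega(Y,W)=0$, so $Y\in\mathcal{V}_1^{\perp_\omega}$. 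Since $\mathcal{V}_1^{\perp_\omega}$ also has rank $\dim P_1$ and both modules are direct summands, the inclusion is an equality: $\mathcal{V}_2=\mathcal{V}_1^{\perp_\omega}$. As the fields $X^\omega_{\Phi^\1 f}$ generate $\mathcal{V}_1^{\perp_\omega}=\mathcal{V}_2=\tau_{-\omega}(L_1)\cap TS[[\lambda]]$, the dictionary gives (a). Taking $\omega$-orthogonals in $\mathcal{V}_1^{\perp_\omega}=\mathcal{V}_2$ yields $\mathcal{V}_1=\mathcal{V}_2^{\perp_\omega}$; applying the dictionary to $\Phi^\2$ with the symplectic form $-\omega$ and using $\tau_\omega(L_2)=\tau_\omega(\tau_{-\omega}(L_1))=L_1$, condition (b) reads $-X^\omega_{\Phi^\2 g}\oplus 0\in L_1$ for all $g$, and the $X^\omega_{\Phi^\2 g}$ generate $\mathcal{V}_2^{\perp_\omega}=\mathcal{V}_1=L_1\cap TS[[\lambda]]$, so (b) holds.

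Finally, (c) follows formally from (a), (b) and the isotropy of $L_2$: for $f\in\Cinf(P_1)[[\lambda]]$ and $g\in\Cinf(P_2)[[\lambda]]$, (a) puts $X^\omega_{\Phi^\1 f}\oplus 0$ in $\tau_{-\omega}(L_1)=L_2$, while applying $\tau_{-\omega}$ to the element $-X^\omega_{\Phi^\2 g}\oplus 0\in L_1$ supplied by (b) puts $-X^\omega_{\Phi^\2 g}\oplus d(\Phi^\2 g)$ in $L_2$ as well; pairing these two elements of the isotropic module $L_2$ gives zero, and the pairing equals, up to sign, $\{\Phi^\1 f,\Phi^\2 g\}_\omega$. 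The step I expect to be the main obstacle is the rank count: in the formal power series setting one must take care to view $\ker TJ_i$, their $\omega$-orthogonals, and the modules $\mathcal{V}_i$ as locally free direct summands of $\mathfrak{X}(S)[[\lambda]]$ of constant rank, so that the dimension equality $\dim S=\dim P_1+\dim P_2$ genuinely upgrades the a priori inclusion $\mathcal{V}_2\subseteq\mathcal{V}_1^{\perp_\omega}$ to the equality that forces the Poisson and anti-Poisson conditions; by contrast, establishing the Dirac dictionary is a routine unwinding of definitions along the lines of \cite{Frejlich}.
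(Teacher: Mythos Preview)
Your proof is correct and follows essentially the same strategy as the paper's: both arguments hinge on using the dimension hypothesis $\dim S = \dim P_1 + \dim P_2$ as a rank count to upgrade an inclusion (coming from the Dirac relation \eqref{eq:Diracstructuresrelation}) to the equality $\mathcal{V}_2 = \mathcal{V}_1^{\perp_\omega}$, and then reading off the Poisson, anti-Poisson, and commutation conditions from this orthogonality. The paper first normalizes so that $\Phi^\1 = J_1^*$ and phrases the key step as showing that $\omega^\flat\circ\exp(\mathcal{L}_Z)\colon \Gamma(\ker TJ_2)[[\lambda]]\to \Gamma(\mathrm{Ann}(\ker TJ_1))[[\lambda]]$ is an isomorphism (checking it at zeroth order via the dimension count), whereas you stay in the Dirac formalism throughout and extract the commutation condition (c) directly from the isotropy of $L_2$; these are organizational differences rather than a different idea. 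Your flagged concern about the rank count in the formal setting is exactly the point where the paper's argument is slightly more explicit---it verifies the relevant map is an isomorphism at $\lambda=0$ and then invokes the standard fact that a $C^\infty(S)[[\lambda]]$-linear map between finitely generated projective modules that is invertible modulo $\lambda$ is invertible---so you may wish to phrase your inclusion-becomes-equality step in those terms rather than as an abstract statement about direct summands.
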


\begin{proof}
    The assertion that \eqref{eq:dualpairdiagram} is a bimodule means that (1) $\Phi^\1$ is a Poisson morphism, (2) $\Phi^\2$ is anti-Poisson, and (3) their images Poisson commute. Note first that condition \eqref{eq:Diracstructuresrelation} is equivalent to $\tau_{\omega}((\Phi^\2)^!\mathrm{gr}(\pi^\2)) = (\Phi^\1)^!\mathrm{gr}(\pi^\1)$, so if the assumptions in the lemma imply that $\Phi^\1$ is Poisson, by changing the roles of $\Phi^\1$ and $\Phi^\2$ they also imply that $\Phi^\2$ is anti-Poisson. Hence it suffices to check that (1) and (3) hold.

    Since we can always change the diagram \eqref{eq:dualpairdiagram} by a formal diffeomorphism on $S$ (as in \eqref{eq:changeS}) and the validity of the lemma is independent of this change, there is no loss in generality in assuming that $\Phi^\1=J_1^*$. We will make this assumption and write $\Phi^\2=\exp(\mathcal{L}_Z)J_2^*$, so \eqref{eq:Diracstructuresrelation} reads
    \begin{equation}\label{eq:diracrel}
    \tau_{-\omega}(J_1^! \mathrm{gr}(\pi^\1))= \exp(\mathcal{L}_Z) J_2^!\mathrm{gr}(\pi^\2)).
    \end{equation}

    The main observation for the proof of the lemma is that the map
    $\omega^\flat\circ \exp(\mathcal{L}_Z): \mathfrak{X}(S)[[\lambda]]\to \Omega^1(S)[[\lambda]]$ restricts to an isomorphism
    \begin{equation}\label{eq:isom1}
      \omega^\flat\circ \exp(\mathcal{L}_Z)\colon \Gamma(\ker(TJ_2))[[\lambda]] \to \Gamma(\mathrm{Ann}(\ker(TJ_1)))[[\lambda]],
    \end{equation}
    while $\exp(-\mathcal{L}_Z)\circ \omega^\flat$ restricts to an isomorphism
 \begin{equation}\label{eq:isom2}
      \exp(-\mathcal{L}_Z)\circ \omega^\flat\colon \Gamma(\ker(TJ_1))[[\lambda]] \to \Gamma(\mathrm{Ann}(\ker(TJ_2)))[[\lambda]].
    \end{equation}
Let us verify that \eqref{eq:isom1} is an isomorphism. Let $Y \in \mathfrak{X}(S)[[\lambda]]$ be such that $TJ_2(\exp(-\mathcal{L}_Z)Y)=0$. Then $\exp(-\mathcal{L}_Z)Y \in J_2^!\mathrm{gr}(\pi^\2)$ (c.f. \eqref{eq:bimagedef}), so $Y\in \exp(\mathcal{L}_Z)J_2^!\mathrm{gr}(\pi^\2)$. By \eqref{eq:diracrel}, $Y\oplus i_Y\omega \in J_1^!\mathrm{gr}(\pi^\1)$, which implies that, at each point $z\in S$, $i_Y\omega_z = J_1^*\xi$ for $\xi \in T^*_{J_1(z)}P_1[[\lambda]]$. Hence $\omega^\flat(Y)\in\Gamma(\mathrm{Ann}(\ker(TJ_1)))[[\lambda]]$, so we obtain the map \eqref{eq:isom1}. By the dimension condition in the lemma, the
injective map $\omega_0^\flat\colon \ker(TJ_2)\to \mathrm{Ann}(\ker(TJ_1))$ is an isomorphism.  So  \eqref{eq:isom1}
is an isomorphism in zeroth order, hence it is an isomorphism.
The verification that \eqref{eq:isom2} is an isomorphism is analogous.

We now check that \eqref{eq:diracrel} implies that $\Phi^\1=J_1^*$ is a Poisson morphism. Take $\beta\in \Omega^1(P_1)[[\lambda]]$, and let $X\in \mathfrak{X}(S)[[\lambda]]$ satisfy $i_X\omega=J_1^*\beta$.
We must verify that, at each point,
\begin{equation}\label{eq:poismap}
TJ_1(X)=(\pi^\1)^\sharp(\beta).
\end{equation}
Since $\omega^\flat(X)\in \Gamma(\mathrm{Ann}(\ker(TJ_1)))$, by the isomorphism \eqref{eq:isom1} we know that
$$
\exp(-\mathcal{L}_Z)X\in \Gamma(\ker(TJ_2))[[\lambda]] \subseteq J_2^!\mathrm{gr}(\pi^\2).$$
Hence
$$
X\in \exp(\mathcal{L}_Z)J_2^!\mathrm{gr}(\pi^\2)=\tau_\omega(J_1^!\mathrm{gr}(\pi^\1)),
$$
which means that $X\oplus i_X\omega \in J_1^!\mathrm{gr}(\pi^\1)$. Since $i_X\omega=J_1^*\beta$ (and $\beta$ is unique since $J_1$ is a submersion), this implies that \eqref{eq:poismap} holds (c.f. \eqref{eq:bimagedef}).

To verify that the images of $\Phi^\1=J_1^*$ and $\Phi^\2=\exp(\mathcal{L}_Z)J_2^*$ Poisson commute, recall that
the Hamiltonian vector field $X_{\Phi^\2(g)}$ satisfies $\omega^\flat(X_{\Phi^\2(g)})=d \Phi^\2(g) =  \exp(\mathcal{L}_Z)J_2^*dg$, so
$$
\exp(-\mathcal{L}_Z)\omega^\flat(X_{\Phi^\2(g)}) \in \Gamma(\mathrm{Ann}(\ker(TJ_2)))[[\lambda]].
$$
By the isomorphism \eqref{eq:isom2}, $X_{\Phi^\2(g)}\in \Gamma(\ker(TJ_1))[[\lambda]]$, and so
$$
\{\Phi^\1(f), \Phi^\2(g) \}_\omega= -J_1^*df (X_{\Phi^\2(g)})=0.
$$
\end{proof}

\subsubsection*{The construction of the self equivalence bimodule}
Let $\pi$ be a formal Poisson structure on a manifold $P$ that vanishes in zeroth order. We now  have the ingredients to prove the existence of self-equivalence bimodules
$$
(\Cinf(P)[[\lambda]], \pi)
    \xrightarrow[]{}
    (\Cinf(T^*P)[[\lambda]], \omega)
    \xleftarrow[]{}
    (\Cinf(P)[[\lambda]], \pi)
$$
as in Lemma~\ref{lem:selfequiv}.

Fix a linear connection $\nabla$ on the cotangent bundle $\rho\colon T^*P \to P$, denote by $\mathrm{hor}$ the corresponding horizontal lift,
and consider the formal vector field $Z\in \lambda \mathfrak{X}(T^*P)[[\lambda]]$ defined at $\xi\in T^*P$ by
$$
Z_\xi=\mathrm{hor}_\xi(\pi^\sharp(\xi)).
$$
\begin{proposition}\label{prop:selfequiv}
The following is an equivalence bimodule:
\begin{equation}
     (\Cinf(P)[[\lambda]], \pi)
    \xrightarrow[]{\rho^*}
    (\Cinf(T^*P)[[\lambda]], \omega)
    \xleftarrow[]{\exp(\mathcal{L}_Z)\rho^*}
    (\Cinf(P)[[\lambda]], \pi),
\end{equation}
where $\omega :=
        \int_{0}^1\exp(s\mathcal{L}_Z)\omega_\canonical ds.$

\end{proposition}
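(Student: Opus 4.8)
The plan is to apply the bimodule criterion of Lemma~\ref{lem:bimodulecriterion} with $P_1=P_2=P$, $S=T^*P$, $\Phi^\1=\rho^*$ (so $Z^\1=0$) and $\Phi^\2=\exp(\mathcal{L}_Z)\rho^*$ (so $Z^\2=Z$). The dimension hypothesis $\dim T^*P=\dim P+\dim P$ is automatic, so the whole proof reduces to the Dirac-type identity
\[
\tau_{-\omega}\big(\rho^!\mathrm{gr}(\pi)\big)=\exp(\mathcal{L}_Z)\big(\rho^!\mathrm{gr}(\pi)\big).
\]
Granting this, Lemma~\ref{lem:bimodulecriterion} makes the diagram a bimodule; since $Z$ vanishes in zeroth order, its underlying geometric diagram is $(P,0)\xleftarrow{\rho}(T^*P,\omega_\canonical)\xrightarrow{\rho}(P,0)$, which is a self equivalence bimodule of $(P,0)$ by \cite{BursztynWeinstein-Picard,BursztynFernandes-Picard} (it is the case $B_0=0$ of the classification recalled in Section~\ref{sec:ClassifyingMap}), so by Definition~\ref{definition:FormalPoissonME} the diagram is a formal equivalence bimodule. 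I will also record at the outset that $\omega$ is a formal symplectic structure: each $\exp(s\mathcal{L}_Z)\omega_\canonical$ is closed, $\omega_0=\omega_\canonical$ because $Z\in\lambda\mathfrak{X}(T^*P)[[\lambda]]$, and $\mathcal{L}_Z\omega_\canonical=d\,i_Z\omega_\canonical$ gives $\omega=\omega_\canonical+d\vartheta$ with $\vartheta\in\lambda\Omega^1(T^*P)[[\lambda]]$, which is the shape of $\omega$ claimed in Lemma~\ref{lem:selfequiv}.

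The key point is that $\omega=\int_0^1\exp(s\mathcal{L}_Z)\omega_\canonical\,ds$ is precisely the $B$-field part of an \emph{inner} symmetry of the formal Courant structure on $T^*P$. Let $\theta$ be the tautological $1$-form on $T^*P$, with $d\theta=-\omega_\canonical$, and set $e_0:=Z\oplus\theta\in\Gamma(\mathbb{T}(T^*P))[[\lambda]]$. First I would check that $e_0\in\rho^!\mathrm{gr}(\pi)$: at $\xi\in T^*_qP$ one has $\theta_\xi=(T_\xi\rho)^*\xi$ and, by definition of $Z$, $T_\xi\rho(Z_\xi)=\pi^\sharp(\xi)$, so the defining conditions \eqref{eq:bimagedef} hold with the auxiliary covector equal to $\xi$ itself. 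Second, from the formula for the Courant--Dorfman bracket, the derivation $\Cour{e_0,\cdot}$ of $\Gamma(\mathbb{T}(T^*P))[[\lambda]]$ is exactly the derivation $(Z,-\omega_\canonical)$ in the notation of \eqref{eq:bder}; hence, by \eqref{eq:1param}, the $1$-parameter group of automorphisms it generates is $t\mapsto\exp(-t\mathcal{L}_Z)\tau_{-B_t}$ with $B_t=\int_0^t\exp(s\mathcal{L}_Z)\omega_\canonical\,ds$, and in particular $B_1=\omega$.

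The identity then follows from the fact, stated in the excerpt, that $\rho^!\mathrm{gr}(\pi)$ is closed under the Courant--Dorfman bracket: being a bracket-closed submodule containing $e_0$, it is preserved by $\Cour{e_0,\cdot}$, and therefore --- just as in the classical Dirac setting (cf.\ \cite{Gualtieri2011}) --- by the whole $1$-parameter group it generates. Evaluating at $t=1$ yields $\exp(-\mathcal{L}_Z)\tau_{-\omega}(\rho^!\mathrm{gr}(\pi))=\rho^!\mathrm{gr}(\pi)$, which, after applying the automorphism $\exp(\mathcal{L}_Z)$, is the required identity, completing the verification.

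I expect the main obstacle to be the second step above: recognizing $\omega$ as the $B$-field component of $\Cour{e_0,\cdot}$ requires pinning down the role of the tautological $1$-form (and the sign convention $\omega_\canonical=-d\theta$, which is exactly what forces $e_0\in\rho^!\mathrm{gr}(\pi)$ rather than $(-Z)\oplus(-\theta)$) as well as carefully tracking the pushforward/pullback conventions in \eqref{eq:bder}--\eqref{eq:1param}. A secondary, routine point is the passage from ``$\Cour{e_0,\cdot}$ preserves $\rho^!\mathrm{gr}(\pi)$'' to ``the flow of $\Cour{e_0,\cdot}$ preserves $\rho^!\mathrm{gr}(\pi)$'': this holds because $\rho^!\mathrm{gr}(\pi)$ is a (formal) subbundle, so one can argue in an adapted local frame, order by order in $\lambda$.
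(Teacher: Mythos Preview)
Your proposal is correct and follows essentially the same approach as the paper: both identify $Z\oplus\theta_\canonical\in\rho^!\mathrm{gr}(\pi)$, recognize the inner derivation $\Cour{Z\oplus\theta_\canonical,\cdot}$ as the pair $(Z,-\omega_\canonical)$, use \eqref{eq:1param} and involutivity of $\rho^!\mathrm{gr}(\pi)$ to see that the resulting flow preserves it, and evaluate at $t=1$ to obtain the Dirac relation required by Lemma~\ref{lem:bimodulecriterion}. Your additional remarks (formal symplecticity of $\omega$, the exactness $\omega-\omega_\canonical=d\vartheta$, and the explicit check that the zeroth-order diagram is the canonical self-equivalence of $(P,0)$) are welcome clarifications that the paper leaves implicit.
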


\begin{proof}
    Let $\theta_\canonical\in\Omega^1(T^*P)$ be the
    tautological $1$-form on $T^*P$, so that $\omega_\canonical = -d\theta_\canonical$. The inner derivation $\Cour{Z\oplus \theta_\canonical,\cdot}$ of $\Gamma(\mathbb{T}M)[[\lambda]]$  coincides with the derivation defined by the pair $(Z,d\theta_\canonical)$ as in \eqref{eq:bder}. So it generates a 1-parameter subgroup of automorphisms $F_t= \exp(-t \mathcal{L}_Z)\tau_{B_t}$, where
    $$
    B_t = -\int_{0}^t\exp(s\mathcal{L}_Z)\omega_\canonical ds.
    $$

Recall that $(\theta_\canonical)_\xi = \rho^*\xi$ and $T\rho(Z_\xi)=\pi^\sharp(\xi)$, so
    $Z \oplus \theta_\canonical\in \rho^!\mathrm{gr}(\pi)$. From the involutivity of $\rho^!\mathrm{gr}(\pi)$ with respect to the Courant-Dorfman bracket, it follows that the derivation $\Cour{Z\oplus \theta_\canonical,\cdot}$ preserves $\rho^!\mathrm{gr}(\pi)\subseteq \Gamma(\mathbb{T}M)[[\lambda]]$, hence so does its corresponding flow:
    $$
    F_t(\rho^!\mathrm{gr}(\pi))=  \exp(-t\mathcal{L}_Z) \tau_{B_t}(\rho^!\mathrm{gr}(\pi)) = \rho^!\mathrm{gr}(\pi),
    $$
    which is equivalent to $\tau_{B_t}(\rho^!\mathrm{gr}(\pi))=\exp(t\mathcal{L}_Z)\rho^!\mathrm{gr}(\pi)$. Setting $t=1$, we obtain the condition
    $$
    \tau_{-\omega}(\rho^! \mathrm{gr}(\pi))= \exp(\mathcal{L}_Z)\rho^!\mathrm{gr}(\pi),
    $$
    for $\omega = -B_1$. The result now follows from Lemma~\ref{lem:bimodulecriterion}.
\end{proof}

Note that Lemma~\ref{lem:selfequiv} is a direct consequence of this proposition since the explicit formula for $\omega$ implies that $\omega = \omega_\canonical + \sum_{=1}^\infty \lambda^k d \theta_k$, for $\theta_k = -\int_0^1 \frac{(s\mathcal{L}_Z)^k}{k!} \theta_\canonical ds$.

\bibliographystyle{amsplain}

\begin{thebibliography}{10}

\bibitem{BayenFlatoetal-I}
F.~Bayen, M.~Flato, C.~Fronsdal, A.~Lichnerowicz, and D.~Sternheimer,
  \emph{Deformation theory and quantization. {I}. {D}eformations of symplectic
  structures}, Ann. Physics \textbf{111} (1978), no.~1, 61--110. \MR{0496157}

\bibitem{BayenFlatoetal-II}
\bysame, \emph{Deformation theory and quantization. {II}. {P}hysical
  applications}, Ann. Physics \textbf{111} (1978), no.~1, 111--151.
  \MR{0496158}

\bibitem{Bursztyn-QuantumLineBundles}
H.~Bursztyn, \emph{Semiclassical geometry of quantum line bundles and {M}orita
  equivalence of star products}, Int. Math. Res. Not. (2002), no.~16, 821--846.
  \MR{1891209}

\bibitem{BDW-Characteristicclasses-starproducts}
H.~Bursztyn, V.~Dolgushev, and S.~Waldmann, \emph{Morita equivalence and
  characteristic classes of star products}, J. Reine Angew. Math. \textbf{662}
  (2012), 95--163. \MR{2876262}

\bibitem{BursztynRadko2003}
H.~Bursztyn and O.~Radko, \emph{Gauge equivalence of {D}irac structures and
  symplectic groupoids}, Ann. Inst. Fourier (Grenoble) \textbf{53} (2003),
  no.~1, 309--337. \MR{1973074}

\bibitem{BursztynWeinstein-Picard}
H.~Bursztyn and A.~Weinstein, \emph{Picard groups in {P}oisson geometry}, Mosc.
  Math. J. \textbf{4} (2004), no.~1, 39--66, 310. \MR{2074983}

\bibitem{Bursztyn2013-DiracManifolds}
H.~Bursztyn, \emph{A brief introduction to {D}irac manifolds}, Geometric
  and topological methods for quantum field theory, Cambridge Univ. Press,
  Cambridge, 2013, pp.~4--38. \MR{3098084}

\bibitem{BursztynFernandes-Picard}
H.~Bursztyn and R.~L.~ Fernandes, \emph{Picard groups of {P}oisson
  manifolds}, J. Differential Geom. \textbf{109} (2018), no.~1, 1--38.
  \MR{3798714}

\bibitem{Bursztyn2005-PoissonGeomMoritaEquiv}
H.~ Bursztyn and A.~ Weinstein, \emph{Poisson geometry and {M}orita
  equivalence}, Poisson geometry, deformation quantisation and group
  representations, London Math. Soc. Lecture Note Ser., vol. 323, Cambridge
  Univ. Press, Cambridge, 2005, pp.~1--78. \MR{2166451}

\bibitem{CannasWeinstein}
A.~ Cannas~da Silva and A.~ Weinstein, \emph{Geometric models for
  noncommutative algebras}, Berkeley Mathematics Lecture Notes, vol.~10,
  American Mathematical Society, Providence, RI; Berkeley Center for Pure and
  Applied Mathematics, Berkeley, CA, 1999. \MR{1747916}

\bibitem{clark2015equivalent}
L.~O. Clark and A.~ Sims, \emph{Equivalent groupoids have {M}orita
  equivalent {S}teinberg algebras}, Journal of Pure and Applied Algebra
  \textbf{219} (2015), no.~6, 2062--2075.

\bibitem{Courant1990}
T.~J. Courant, \emph{Dirac manifolds}, Trans. Amer. Math. Soc. \textbf{319}
  (1990), no.~2, 631--661. \MR{998124}

\bibitem{Crainicetal2004}
M. Crainic and R.~L.~ Fernandes, \emph{Integrability of {P}oisson
  brackets}, J. Differential Geom. \textbf{66} (2004), no.~1, 71--137.
  \MR{2128714}

\bibitem{Frejlich}
P.~ Frejlich and I.~ M\u{a}rcu\c{t}, \emph{On dual pairs in {D}irac
  geometry}, Math. Z. \textbf{289} (2018), no.~1-2, 171--200. \MR{3803786}

\bibitem{Gualtieri2011}
M.~Gualtieri, \emph{Generalized complex geometry}, Ann. of Math. (2)
  \textbf{174} (2011), no.~1, 75--123. \MR{2811595}

\bibitem{Gutt-variationsonDQ-2000}
S.~Gutt, \emph{Variations on deformation quantization}, Conf\'erence {M}osh\'e
  {F}lato 1999, {V}ol. {I} ({D}ijon), Math. Phys. Stud., vol.~21, Kluwer Acad.
  Publ., Dordrecht, 2000, pp.~217--254. \MR{1805893}

\bibitem{Shengda2009HamiltonianSymmetries}
S.~ Hu, \emph{Hamiltonian symmetries and reduction in generalized
  geometry}, Houston journal of mathematics, \textbf{35} (2009), no.~3,
  787--811.

\bibitem{Kontsevich2003}
M.~Kontsevich, \emph{Deformation quantization of {P}oisson manifolds}, Lett.
  Math. Phys. \textbf{66} (2003), no.~3, 157--216. \MR{2062626}

\bibitem{KorSoiBook}
L.~I. Korogodski and Y.~S. Soibelman, \emph{Algebras of functions on
  quantum groups. {P}art {I}}, Mathematical Surveys and Monographs, vol.~56,
  American Mathematical Society, Providence, RI, 1998. \MR{1614943}

\bibitem{Landsman200-BicategoriesOpAlgebras}
N.~P. Landsman, \emph{Bicategories of operator algebras and {P}oisson
  manifolds}, Mathematical physics in mathematics and physics ({S}iena, 2000),
  Fields Inst. Commun., vol.~30, Amer. Math. Soc., Providence, RI, 2001,
  pp.~271--286. \MR{1867561}

\bibitem{Landsman2001-OperatorAlgebras}
\bysame, \emph{Operator algebras and {P}oisson manifolds associated to
  groupoids}, Comm. Math. Phys. \textbf{222} (2001), no.~1, 97--116.
  \MR{1853865}

\bibitem{Lecomte-LMP13-1987}
P.~B.~A. Lecomte, \emph{Application of the cohomology of graded {L}ie algebras
  to formal deformations of {L}ie algebras}, Lett. Math. Phys. \textbf{13}
  (1987), no.~2, 157--166. \MR{886153}

\bibitem{LWX1997-ManinTriples}
Z.-J. Liu, A.~Weinstein, and P.~Xu, \emph{Manin triples for {L}ie
  bialgebroids}, J. Differential Geom. \textbf{45} (1997), no.~3, 547--574.
  \MR{1472888}

\bibitem{Meinrenken2018PoissonfromDiracPerspective}
E.~ Meinrenken, \emph{Poisson geometry from a dirac perspective}, Letters
  in Mathematical Physics \textbf{108} (2018), no.~3, 447--498.

\bibitem{MOR2004-LocalGlobarPair}
J.~ Montaldi, J.-P.~ Ortega, and T.~S. Ratiu, \emph{The relation
  between local and global dual pairs}, Math. Res. Lett. \textbf{11} (2004),
  355--363.

\bibitem{Morita1958}
K.~Morita, \emph{Duality for modules and its applications to the theory of
  rings with minimum condition}, Sci. Rep. Tokyo Kyoiku Daigaku Sect. A
  \textbf{6} (1958), 83--142. \MR{0096700}

\bibitem{Mrcun1999-Functoriality}
J. Mr{\v{c}}un, \emph{Functoriality of the bimodule associated to a
  {H}ilsum-{S}kandalis map}, $K$-Theory \textbf{18} (1999), no.~3, 235--253.
  \MR{1722796}

\bibitem{Muhlyetal87}
P.~S. Muhly, J.~N. Renault, and D.~P. Williams, \emph{Equivalence and
  isomorphism for groupoid {$C^\ast$}-algebras}, J. Operator Theory \textbf{17}
  (1987), no.~1, 3--22. \MR{873460}

\bibitem{Nijenhuis1967-DefMorphismsLieGroups}
A.~Nijenhuis and R.~W. Richardson, Jr., \emph{Deformations of homomorphisms
  of {L}ie groups and {L}ie algebras}, Bull. Amer. Math. Soc. \textbf{73}
  (1967), 175--179. \MR{0204575}

\bibitem{Nijenhuis1967-DefLieAlgebras}
\bysame, \emph{Deformations of {L}ie algebra structures}, J. Math. Mech.
  \textbf{17} (1967), 89--105. \MR{0214636}

\bibitem{SeveraWeinstein2001}
P.~{\v{S}}evera and A.~Weinstein, \emph{Poisson geometry with a 3-form
  background}, Progr. Theoret. Phys. Suppl. (2001), no.~144, 145--154,
  Noncommutative geometry and string theory (Yokohama, 2001). \MR{2023853}

\bibitem{warner2013foundations}
F.~W. Warner, \emph{Foundations of differentiable manifolds and lie groups},
  vol.~94, Springer Science \& Business Media, 2013.

\bibitem{Weinstein83}
A.~Weinstein, \emph{The local structure of {P}oisson manifolds}, J.
  Differential Geom. \textbf{18} (1983), no.~3, 523--557. \MR{723816}

\bibitem{Xu1991}
P.~Xu, \emph{Morita equivalence of {P}oisson manifolds}, Comm. Math. Phys.
  \textbf{142} (1991), no.~3, 493--509. \MR{1138048}

\bibitem{Xu1991-MESymplecticGroupoids}
P.~Xu, \emph{Morita equivalent symplectic groupoids}, Symplectic geometry,
  groupoids, and integrable systems ({B}erkeley, {CA}, 1989), Math. Sci. Res.
  Inst. Publ., vol.~20, Springer, New York, 1991, pp.~291--311. \MR{1104935}

\end{thebibliography}

\def\cprime{$'$}
\providecommand{\bysame}{\leavevmode\hbox to3em{\hrulefill}\thinspace}
\providecommand{\MR}{\relax\ifhmode\unskip\space\fi MR }
\providecommand{\MRhref}[2]{%
  \href{http://www.ams.org/mathscinet-getitem?mr=#1}{#2}
}
\providecommand{\href}[2]{#2}

%
%


%
%

\end{document}
